\documentclass{article}

\usepackage[sort]{natbib}
\bibliographystyle{bernoulli}
%\bibliographystyle{spbasic}
%\bibliographystyle{elsarticle-num}
%\bibliographystyle{abbrvnat}

%\usepackage[hypertex,colorlinks=true,allcolors=black]{hyperref}  
%\AtBeginDvi{\special{pdf:tounicode 90ms-RKSJ-UCS2}} 
\usepackage[bookmarks=true,bookmarksnumbered=true,colorlinks=true,allcolors=black]{hyperref} 
\usepackage{amsmath,amssymb,amsfonts,amsthm}

\usepackage{geometry}
\geometry{left=20mm,right=20mm,top=20mm,bottom=20mm}
\usepackage{enumitem}

\newtheorem{theorem}{Theorem}[section]
\newtheorem{lem}{Lemma}[section]
\newtheorem{prop}{Proposition}[section]

%\theoremstyle{remark}
%\newdefinition{rmk}{Remark}[section]
\theoremstyle{definition}
\newtheorem{Def}{Definition}[section]
\newtheorem*{rmk*}{Remark}
\newtheorem{rmk}{Remark}[section]
\newtheorem{example}{Example}[section]

\DeclareMathOperator{\MRC}{MRC}

\DeclareMathOperator{\diag}{diag}
\DeclareMathOperator{\Cov}{Cov}

%\DeclareMathAlphabet{\mathcal}{OMS}{cmsy}{m}{n}

\if0
\makeatletter
    
    \@addtoreset{equation}{section}
\makeatother
\fi
\numberwithin{equation}{section}

%\if0
\makeatletter
    \renewcommand*{\section}{\@startsection{section}{1}{\z@}%
    {10pt}{5pt}{\reset@font\normalsize\bfseries}}
\makeatother
    
\makeatletter
    \renewcommand*{\subsection}{\@startsection{subsection}{2}{\z@}%
    {5pt}{5pt}{\reset@font\normalsize\mdseries\itshape}}
\makeatother

\makeatletter
    \renewcommand*{\subsubsection}{\@startsection{subsubsection}{3}{\z@}%
    {5pt}{5pt}{\reset@font\normalsize\mdseries\itshape}}
\makeatother
%\fi

\if0
\makeatletter
\def\@seccntformat#1{\csname the#1\endcsname.\quad}
\makeatletter
\fi

\if0
\makeatletter
\def\@listi{\leftmargin\leftmargini
  \topsep=.5\baselineskip %0pt
  \partopsep=0pt \parsep=0pt \itemsep=0pt}
\let\@listI\@listi
\@listi
\def\@listii{\leftmargin\leftmarginii
  \labelwidth\leftmarginii \advance\labelwidth-\labelsep
  \topsep=0pt \partopsep=0pt \parsep=0pt \itemsep=0pt}
\def\@listiii{\leftmargin\leftmarginiii
  \labelwidth\leftmarginiii \advance\labelwidth-\labelsep
  \topsep=0pt \partopsep=0pt \parsep=0pt \itemsep=0pt}
\def\@listiv{\leftmargin\leftmarginiv
  \labelwidth\leftmarginiv \advance\labelwidth-\labelsep
  \topsep=0pt \partopsep=0pt \parsep=0pt \itemsep=0pt}
\makeatother
\fi

\usepackage[pagewise,mathlines]{lineno}

\newcommand*\patchAmsMathEnvironmentForLineno[1]{%
  \expandafter\let\csname old#1\expandafter\endcsname\csname #1\endcsname
  \expandafter\let\csname oldend#1\expandafter\endcsname\csname end#1\endcsname
  \renewenvironment{#1}%
     {\linenomath\csname old#1\endcsname}%
     {\csname oldend#1\endcsname\endlinenomath}}% 
\newcommand*\patchBothAmsMathEnvironmentsForLineno[1]{%
  \patchAmsMathEnvironmentForLineno{#1}%
  \patchAmsMathEnvironmentForLineno{#1*}}%
\AtBeginDocument{%
\patchBothAmsMathEnvironmentsForLineno{equation}%
\patchBothAmsMathEnvironmentsForLineno{align}%
\patchBothAmsMathEnvironmentsForLineno{flalign}%
\patchBothAmsMathEnvironmentsForLineno{alignat}%
\patchBothAmsMathEnvironmentsForLineno{gather}%
\patchBothAmsMathEnvironmentsForLineno{multline}%
}

\allowdisplaybreaks

\title{Time endogeneity and an optimal weight function in pre-averaging covariance estimation}
\author{Yuta Koike\vspace{2mm}\\ 
\parbox{12cm}
{\small\itshape Risk Analysis Research Center, The Institute of Statistical Mathematics, 10-3 Midori-cho, Tachikawa, Tokyo 190-8562, Japan and CREST, JST, Email: kyuta@ism.ac.jp
}
\date{}
}

\begin{document}

%\linenumbers

\maketitle

\begin{abstract}

We establish a central limit theorem for a class of pre-averaging covariance estimators in a general endogenous time setting. In particular, we show that the time endogeneity has no impact on the asymptotic distribution if certain functionals of observation times are asymptotically well-defined. This contrasts with the case of the realized volatility in a pure diffusion setting. We also discuss an optimal choice of the weight function in the pre-averaging.\vspace{3mm}

\noindent \textit{Keywords}: Central limit theorem; Jumps; Market microstructure noise; Non-synchronous observations; Pre-averaging; Time endogeneity.

\end{abstract}

\renewcommand*{\baselinestretch}{1.25}\selectfont

\section{Introduction}

%Measuring the covariation of two assets is one of the central problems in financial econometrics because it serves as a basis for many areas of finance, such as risk management, portfolio allocation and hedging strategies. In recent years there has been a considerable development of the statistical approaches to this problem using high frequency data. Such approaches were pioneered by \cite{AB1998} and \cite{BNS2002}, and their methods are based on the semimartingale theory. 

%In recent years the statistical inference for the quadratic covariation matrix of a semimartingale observed at a high-frequency has been an active research area. 
%The covariance structure of assets plays an important role in financial econometrics. 
In the past decade an improvement in the availability of financial high-frequency data has highlighted applications of the classic asymptotic theory for the quadratic covariation of a semimartingale to the inference for the covariance structure of asset returns.
Empirical evidences, however, suggest that at ultra-high frequencies asset price processes follow a semimartingale contaminated by noise (called \textit{microstructure noise}) rather than a pure semimartingale. In addition, at ultra-high frequencies financial data are possibly recorded at irregular times, and this causes the non-synchronicity of observation times between multiple assets.

Recently various approaches have been proposed for estimating the quadratic covariation matrix of a semimartingale observed at a high frequency in a non-synchronous manner with additive observation noise. Thus far the most prominent ones are the subsampling approach by (\citealt{B2011a,Zhang2011}), the realized kernel estimation by \cite{BNHLS2011}, the pre-averaging method by (\citealt{CKP2010,CPV2013}), the quasi maximum likelihood (QML) approach by (\citealt{AFX2010,LT2014}), and the spectral method by (\citealt{BHMR2014,BW2015}). In this paper we focus on the pre-averaging method, especially the \textit{modulated realized covariance} (abbreviated \textit{MRC}) introduced in \cite{CKP2010}.\footnote{The preliminary version \cite{Koike2013hit} of this paper focuses on the pre-averaged Hayashi-Yoshida estimator, which is another covariance estimator introduced in \cite{CKP2010}.}

Specifically, we consider the following model:%CPV2013,JPV2010
\begin{equation*}%\label{Ymodel}
Y_t=X_t+\epsilon_t,\qquad t\geq0,
\end{equation*}
where $X=(X_t)_{t\geq0}$ is a $d$-dimensional process (\textit{latent log-price}) and $\epsilon=(\epsilon_t)_{t\geq0}$ is a $d$-dimensional error process (\textit{microstructure noise}) which is, conditionally on the process $X$, centered and serially independent. We assume that $X$ is of the form
\begin{equation*}%\label{Xmodel}
X_t=X_0+\int_0^t a_s\mathrm{d}s+\int_0^t\sigma_s\mathrm{d}W_s,
\end{equation*}
where $a=(a_s)_{s\geq0}$ is an $\mathbb{R}^d$-valued c\`adl\`ag process, $\sigma=(\sigma_s)_{s\geq0}$ is an $\mathbb{R}^d\otimes\mathbb{R}^{d'}$-valued c\`adl\`ag volatility, and $W$ is a $d'$-dimensional Wiener process. Our objective is the quadratic covariation matrix of $X$ over some fixed interval $[0,T]$ (hereafter an asterisk denotes the transpose of a matrix):
\begin{equation*}
[X,X]_T=\int_0^T\Sigma_t\mathrm{d}t,\qquad\Sigma_t=\sigma_t\sigma_t^*.
\end{equation*}
Let us recall the definition of the MRC estimator in the synchronous sampling case. Suppose that we have observation data $(Y_{t_i})_{i=0}^N$ with observation times $0\leq t_0<t_1<\cdots<t_{N-1}<t_N\leq T$. 
%We assume that $(t_i)_{i=0}^\infty$ is a sequence of stopping times which implicitly depend on a parameter $n\in\mathbb{N}$ representing the frequency of the observations and satisfies $t_i\uparrow\infty$ as $i\to\infty$ and $\sup_{i\geq0}(t_i\wedge t-t_{i-1}\wedge t)\to^p0$ as $n\to\infty$ for any $t\in\mathbb{R}_+$, with setting $t_{-1}=0$ for a notational convenience (hereafter we will refer to such a sequence as a \textit{sampling scheme}). 
Then, we choose a weight function $g$ on $[0,1]$ and a window size $K$ with which we associate the variables called the \textit{pre-averaging of $Y$}:%JPV2010
\begin{equation*}
\overline{Y}_{t_i}=\sum_{j=1}^{K-1}g\left(\frac{j}{K}\right)\Delta_{t_{i+j}}Y,\qquad
\Delta_{t_i}Y=Y_{t_{i}}-Y_{t_{i-1}}.
\end{equation*}
Since the observation errors are centered and serially independent, one can expect that $\overline{Y}_{t_i}$'s are close to the latent returns. Therefore, it is natural to consider the statistic $\sum_{i=0}^{N-K+1}\overline{Y}_{t_i}\left(\overline{Y}_{t_i}\right)^*$ as an estimator of $[X,X]$. In fact, \cite{CKP2010} showed that a bias corrected version of this estimator has the consistency and the asymptotic mixed normality as long as the observation times are equidistant ($t_i=i/N$) and we consider the situation where $N$ goes to infinity. This bias corrected version of the estimator is called the MRC estimator. 

Now, our main concern is the following two questions:
\begin{enumerate}[noitemsep,label=(\alph*)]

\item \textit{What happens when the observation times are endogenous?}

\item \textit{What is an optimal choice of the weight function $g$?}

\end{enumerate}
By the term ``endogenous'' we mean that the observation times depend on the latent log-price process $X$. Indeed, this issue is a relatively new subject in this area despite its importance for both theoretical and practical perspectives. 
%In fact, in a pure one-dimensional diffusion setting, \citet{Fu2010b} showed that the endogeneity of the observation times can cause a bias of the asymptotic distribution of the realized volatility $\sum_{i=1}^N(\Delta_{t_i}X)^2$, which is a natural estimator for $[X,X]_T$ in such a setting. This phenomenon was independently found by \citet{LMRZZ2014}, and they also constructed a feasible central limit theorem as well as conducted empirical work that provides evidence that time endogeneity exists in financial data. %literature
In fact, in a pure one-dimensional diffusion setting, \citet{Fu2010b} showed that the endogeneity of the observation times can cause a bias of the asymptotic distribution of the realized volatility $\sum_{i=1}^N(\Delta_{t_i}X)^2$, which is a natural estimator for $[X,X]_T$ in such a setting. This phenomenon was independently found by \citet{LMRZZ2014}, and they also constructed a feasible central limit theorem as well as conducted empirical work that provides evidence that time endogeneity exists in financial data. In their analysis, the skewness and kurtosis of the returns $\Delta_{t_i}X$ play an important role. In particular, \cite{LMRZZ2014} showed that the former quantity has a strong connection with the covariance between the returns $\Delta_{t_i}X$ and the durations $t_i-t_{i-1}$ (see Remark 3 of \cite{LMRZZ2014}). \citet{RW2011} discussed the effect of this covariance on the volatility inference in a semi-parametric context. %literature
On the other hand, \citet{LZZ2012} derived a corresponding result to the one by \cite{LMRZZ2014} in the presence of microstructure noise. More precisely, they considered the following estimator: choose two integers $p$ and $q$ such that $p<q$, and set
\begin{align*}
\hat{Y}_{t_i}=\frac{1}{p}\sum_{j=0}^{p-1}\left(Y_{t_{i+j+q}}-Y_{t_{i+j}}\right).
\end{align*}
They showed that after appropriate scaling, the estimator $\sum_{i=0}^{N-(p+q)+1}(\hat{Y}_{t_i})^2$ is (possibly biased) asymptotic mixed normal under some regularity conditions; see Theorem 2 of \cite{LZZ2012} for details. In particular, according to their theory the asymptotic distribution of the estimation error $\sqrt{\frac{N}{q}}(\frac{1}{q}\sum_{i=0}^{N-(p+q)+1}(\hat{X}_{t_i})^2-[X,X]_T)$ due to the diffusion part is characterized by the probability limit of the processes given by
\begin{equation}\label{lzz}
\frac{N}{q}\sum_{i\geq q,t_i\leq t}\left(\sum_{j=1}^{q-1}\frac{q-j}{q}\Delta_{t_{i-j}}X\right)^2(\Delta_{t_{i}}X)^2\qquad
\mathrm{and}
\qquad\frac{\sqrt{N}}{q^{3/2}}\sum_{t_{i+p+q-1}\leq t}(\hat{X}_{t_i})^3
\end{equation}
for each $t\in[0,T]$. Note that if $p=q$ their estimator corresponds to the MRC estimator while $g(x)=x\wedge(1-x)$ and $K=2p$. In this paper we concentrate on the case where $p=q$ because the estimator achieves the optimal rate of convergence under these circumstances. %RW,RR

%Therefore, regarding the question (a) our aim will be to develop an extension of their result to the multivariate and general weight function setting. However, it is not obvious to extend the first quantity of \eqref{lzz} to the general weight function. It is also preferable to give an explicit relationship between the asymptotic variance and the tuning parameters $g$ and $k_n$ in order to obtain the information on the optimal choice of them (this is especially important for the issue (b)), and the characterization of the asymptotic variance by \eqref{lzz} is not adapted to this purpose because the limit of the quantities in \eqref{lzz} will depend on the tuning parameters unclearly.
 Therefore, regarding question (a) one possible approach would be to find some counterparts of the quantities in Eq.\eqref{lzz} in the multivariate and the general weight function setting. Unfortunately, we encounter some difficulties taking this approach. Namely, (i) it is not clear what the first quantity of \eqref{lzz} corresponds to in the general weight function setting, and (ii) it is preferable to give an explicit relation between the asymptotic distribution of the estimator and the tuning parameters $g$ and $K$ in order to obtain information on the optimal choice. This is especially important for question (b). The characterization by the quantities in \eqref{lzz}, however, is not adapted to this purpose because their limiting variables will depend on the tuning parameters in an unspecified way.
For this reason we introduce another set of conditions, which is independent of the choice of the tuning parameters, for handling the time endogeneity. 
Those conditions require that certain functionals of the observation times are asymptotically well-defined, and they seem reasonable for covering important models used in financial econometrics (cf.~Remark \ref{rmk:A4}). 
Interestingly, it turns out that the time endogeneity has \textit{no} impact on the asymptotic distribution of the MRC estimator under our conditions. This is quite different from the case of the realized volatility in a pure diffusion setting and makes the derivation of feasible limit theorems easier. 
%It also turns out that the result can easily be extended to a non-synchronous observation setting where we consider the MRC estimator based on synchronized data. 

On the other hand, regarding question (b) we try to find an optimal weight function in the sense that it minimizes the asymptotic variance of the MRC estimator in the univariate and parametric setting with equidistant observation times. To accomplish this, we need to extend the class of weight functions to those with unbounded supports. This is implemented in Section \ref{setting}. After that, in Section \ref{optimal} the double exponential density is shown to be an optimal weight function. In fact, it turns out that the double exponential density is a counterpart of the optimal kernel function for the flat-top realized kernel of \citet{BNHLS2008}. Therefore, the MRC estimator with the double exponential density and the oracle window size $K$ achieves the parametric efficiency bound from \cite{GJ2001a}. We also point out that this optimal weight function has a computational advantage and discuss two related topics, comparison with other efficient estimators and what happens in the presence of jumps.
This paper is organized as follows. 
Section \ref{setting} presents the mathematical model and the construction of the MRC estimator in a more general setting. 
Section \ref{main} is devoted to the main result of this paper. 
%Section \ref{examples} provides some illustrative examples of observation times which are possibly endogenous. 
%Section \ref{application} discusses a connection between \citet{LZZ2012}'s result and ours as well as a feasible central limit theorem, while Section \ref{simulation} provides a simulation study. 
Section \ref{discussion:A4} discusses connections between our assumption on the observation times and quantities related to the observation times appearing in the preceding studies. 
Section \ref{section:optimal} deals with question (b) and related topics.     
All proofs are given in Section \ref{secproofs}.

\subsection*{General notation}

We denote by $\mathbb{R}^{d}\otimes\mathbb{R}^{d'}$ the set of $d\times d'$ matrices. For a matrix $A\in\mathbb{R}^{d}\otimes\mathbb{R}^{d'}$, we write the entries $A^{kl}$, $1\leq k\leq d$, $1\leq l\leq d'$, and the Frobenius norm $\|A\|$, i.e.~$\|A\|^2=\sum_{k=1}^{d}\sum_{l=1}^{d'}(A^{kl})^2$. For the case of $d'=1$ we write $A^k$ instead of $A^{k1}$. 
Finally, $\mathbb{D}_T^{d\times d'}$ denotes the space of $\mathbb{R}^{d}\otimes\mathbb{R}^{d'}$-valued c\`adl\`ag functions on $[0,T]$ equipped with the Skorokhod topology.

%%%%%%%%%%%%%%%%%%%%%%%%%%%%%%%%%%%%%%%%%%%%%%%%%%%%%%%%%%
%                         ILPA/ The setting
%%%%%%%%%%%%%%%%%%%%%%%%%%%%%%%%%%%%%%%%%%%%%%%%%%%%%%%%%%%%

\section{The setting}\label{setting}

%\subsection{Model}

We begin by constructing a suitable stochastic basis on which our noisy process $Y$ is defined. 
We fix a stochastic basis $\mathcal{B}^{(0)}=(\Omega^{(0)},\mathcal{F}^{(0)},\mathbf{F}^{(0)}=(\mathcal{F}^{(0)}_t)_{t\geq0} ,P^{(0)})$ on which our latent process $X$ is defined, such that all the constituting processes $a,\sigma$ and $W$ are adapted. 
%We observe the components of the process $Y=(Y^1,\dots,Y^d)$ on the interval $[0,T]$ in a discrete and non-synchronous manner. 
For each $k=1,\dots,d$ the observation times for $Y^k$ are denoted by $t^k_0,t^k_1,\dots$, i.e.~the observation data $(Y^k_{t^k_i})_{t^k_i\leq T}$ are available. They are assumed to be $\mathbf{F}^{(0)}$-stopping times which implicitly depend on a parameter $n\in\mathbb{N}$ representing the observation frequency and satisfy that $t^k_i\uparrow\infty$ as $i\to\infty$ and $\sup_{i\geq0}(t^k_i\wedge t-t^k_{i-1}\wedge t)\to^p0$ as $n\to\infty$ for any $t\in\mathbb{R}_+$, with setting $t^k_{-1}=0$ for notational convenience (hereafter we will refer to such a sequence as a \textit{sampling scheme} for short).

At the observation frequency $n\in\mathbb{N}$, we construct the stochastic basis $\mathcal{B}=(\Omega,\mathcal{F},\mathbf{F}=(\mathcal{F}_t)_{t\in\mathbb{R}_+} ,P)$ where our noisy process $Y$ is defined in the following way (for notational simplicity we subtract the index $n$ from $\mathcal{B}$). First, define the sequence $(\mathcal{T}^n_i)_{i\in\mathbb{Z}_+}$ of $\mathbf{F}^{(0)}$-stopping times sequentially by $\mathcal{T}^n_0=\min_{k=1,\dots,d}t^k_0$ and $\mathcal{T}^n_i=\min_{k=1,\dots,d}\min\{t^k_j:t^k_j>\mathcal{T}^n_{i-1}\}$ for $i=1,2\dots$. Namely, $(\mathcal{T}^n_i)$ is the increasing reordering of total observation times. $\mathcal{T}^n_i$'s are indeed $\mathbf{F}^{(0)}$-stopping times because they can be rewritten as $\mathcal{T}^n_i=\min_{k=1,\dots,d}\inf_{j\geq1} \left(t^k_j\right)_{\{t^k_j>\mathcal{T}^n_{i-1}\}},$ where for an $\mathbf{F}^{(0)}$-stopping time $\tau$ and a set $A\in\mathcal{F}^{(0)}_\tau$, we define $\tau_A$ by $\tau_A(\omega^{(0)})=\tau(\omega^{(0)})$ if $\omega^{(0)}\in A$; $\tau_A(\omega^{(0)})=\infty$ otherwise (see I-1.15 of \cite{JS}). 
%assume that we have a sequence $(\mathcal{T}^n_i)_{i\geq0}$ of (strictly) increasing $\mathbf{F}^{(0)}$-stopping times such that $\{t^k_i:k=1,\dots,d\textrm{ and }i\in\mathbb{Z}_+\}\subset\{\mathcal{T}^n_i:i\in\mathbb{Z}_+\}$. One example of such a sequence is the re-ordered total observation times, i.e.~$\mathcal{T}^n_0=\min_{k=1,\dots,d}t^k_0$ and $\mathcal{T}^n_i=\min_{k=1,\dots,d}\min\{t^k_j:t^k_j>\mathcal{T}^n_{i-1}\}$ for $i=1,2\dots$ sequentially.
For each $t\in\mathbb{R}_+$, we have a transition probability $Q_t(\omega^{(0)},\mathrm{d}u)$ from $(\Omega^{(0)},\mathcal{F}^{(0)}_t)$ into $\mathbb{R}^d$ satisfying 
$\int u Q_t(\omega^{(0)},\mathrm{d}u)=0$, which will correspond to the conditional distribution of the noise at the time $t$ given $\mathcal{F}^{(0)}_t$. 
%and such that the process $(Q_t(\cdot,A))_{t\in\mathbb{R}_+}$ is $\mathbf{F}^{(0)}$-progressively measurable for any Borel set $A$ of $\mathbb{R}^d$. 
We endow the space $\Omega^{(1)}=(\mathbb{R}^d)^{\mathbb{N}}$ with the product Borel $\sigma$-field $\mathcal{F}^{(1)}$ and with the probability measure $Q(\omega^{(0)},\mathrm{d}\omega^{(1)})$ which is the product $\otimes_{i\in\mathbb{N}}Q_{\mathcal{T}^n_i(\omega^{(0)})}(\omega^{(0)},\cdot)$. Then, we define the probability space $(\Omega,\mathcal{F},P)$ by 
$\Omega=\Omega^{(0)}\times\Omega^{(1)},$ 
$\mathcal{F}=\mathcal{F}^{(0)}\otimes\mathcal{F}^{(1)},$ and 
$P(\mathrm{d}\omega^{(0)},\mathrm{d}\omega^{(1)})=P^{(0)}(\mathrm{d}\omega^{(0)})Q(\omega^{(0)},\mathrm{d}\omega^{(1)}).$ 
Here, we impose the following measurability condition to ensure the probability measure $P$ is well-defined:
\begin{equation}\label{progressive}
\textrm{The process $(Q_t(\cdot,A))_{t\in\mathbb{R}_+}$ is $\mathbf{F}^{(0)}$-progressively measurable for any Borel subset $A$ of $\mathbb{R}^d$.}
\end{equation}
Any variable or process defined on either $\Omega^{(0)}$ or $\Omega^{(1)}$ can be considered in the usual way as a variable or a process on $\Omega$. In terms of financial applications, the space $\Omega^{(0)}$ stands for latent log-price processes, while the space $\Omega^{(1)}$ stands for microstructure noise. 
Now, the error process $\epsilon=(\epsilon_t)_{t\in\mathbb{R}_+}$ is realized as $\epsilon_t=\epsilon^0_{\mathsf{N}_n(t)}$, where $(\epsilon^0_i)_{i\in\mathbb{N}}$ denotes the canonical process on $(\Omega^{(1)},\mathcal{F}^{(1)})$ and $\mathsf{N}_n(t)=\sum_{i=0}^\infty1_{\{\mathcal{T}^n_i\leq t\}}$. By construction $(\epsilon_{\mathcal{T}^n_i})_{i\in\mathbb{Z}_+}$ is, conditionally on $\mathcal{F}^{(0)}$, serially independent. Finally, the filtration $\mathbf{F}$ is defined as the one generated by $\mathbf{F}^{(0)}$ and $(\epsilon_t)_{t\in\mathbb{R}_+}$. 

\if0
For a technical reason it is desirable that $\mathcal{B}$ is a very good filtered extension of $\mathcal{B}^{(0)}$, i.e.~the variable $Q(\cdot,A)$ is $\mathcal{F}_t^{(0)}$-measurable for all $A\in\mathcal{F}_t$ and all $t\in\mathbb{R}_+$. To ensure this, we impose the following measurability condition:
\begin{equation}
\textrm{The process $(Q_t(\cdot,A))_{t\in\mathbb{R}_+}$ is $\mathbf{F}^{(0)}$-progressively measurable for any Borel subset $A$ of $\mathbb{R}^d$.}
\end{equation}
\fi

%\subsection{Construction of the estimator}

Next we explain the construction of the MRC estimator in the non-synchronous sampling setting, which is briefly discussed in Section 3.6 of \citet{CKP2010}. Following \citet{BNHLS2011}, we introduce the notion of \textit{refresh time}:
\begin{Def}[Refresh time]
The refresh times $T_0,T_1,\dots$ of the sampling schemes $\{(t^k_i)\}_{k=1}^d$ are defined sequentially by $T_0=\max\{t^1_0,\dots,t^d_0\}$ and 
$T_p=\max_{k=1,\dots,d}\min\{t^k_i:t^k_i>T_{p-1}\}$ for $p=1,2,\dots$.
\end{Def}
We introduce synchronized observation times by interpolating the next-ticks into the grid $(T_p)_{p=0}^\infty$. That is, for each $k=1,\dots,d$ define the synchronized observation times $(\tau^k_p)_{p=0}^\infty$ for $Y^k$ by $\tau^k_0=t^k_0$ and
\begin{align*}
\tau^k_p=\min\{t^k_i:t^k_i>T_{p-1}\},\qquad p=1,2,\dots.
\end{align*}
Here, unlike the preceding studies, we prefer the \textit{next-tick} interpolation scheme to the \textit{previous-tick} interpolation scheme because it automatically makes the resulting synchronized observation times stopping times as we have $\tau^k_p=\inf_{i\geq1} \left(t^k_i\right)_{\{t^k_i>T_{p-1}\}}$.

Based on the synchronized data constructed in the above, we introduce the pre-averaging as follows. 
%Denote by $(b_n)$ a sequence of positive numbers tending to 0 as $n\to\infty$ (typically $b_n=n^{-1}$). 
We choose a sequence $k_n$ of positive integers and a number $\theta\in(0,\infty)$ such that
\begin{equation}\label{window} 
k_n=\theta\sqrt{n} +o(n^{1/4})
\end{equation}
as $n\to\infty$. We also choose a continuous function $g:[0,1]\rightarrow\mathbb{R}$ which is piecewise $C^1$ with a piecewise Lipschitz derivative $g'$ and satisfies 
\begin{equation}\label{weightPA}
g(0)=g(1)=0\qquad\mathrm{and}\qquad\int_0^1 g(x)^2\mathrm{d}x>0.
\end{equation}
After that, for any $d$-dimensional stochastic process $V=(V^1,\dots,V^d)$ we define the quantity
\begin{equation}\label{defPA}
\overline{V}^k_{i}=\sum_{p=1}^{k_n-1}g\left(\frac{p}{k_n}\right)\left(V^k_{\tau^k_{i+p}}-V^k_{\tau^k_{i+p-1}}\right),
\end{equation}
and set $\overline{V}_i=(\overline{V}^1_i,\dots,\overline{V}^d_i)^*$. Now the MRC estimator in the non-synchronous setting is defined as 
\begin{align*}
\MRC[Y]^n_T=\frac{1}{\psi_2 k_n}\sum_{i=0}^{N^n_T-k_n+1}\overline{Y}_{i}\left(\overline{Y}_{i}\right)^*
-\frac{\psi_1}{2\psi_2k_n^2}[Y,Y]^n_T,
\end{align*}
where $N^n_t=\max\{p:T_p\leq t\}$, $\psi_1=\int_0^1g'(x)^2\mathrm{d}x$, $\psi_2=\int_0^1g(x)^2\mathrm{d}x$ and
\begin{align*}
[Y,Y]^n_t=\sum_{p=1}^{N^n_t}\Delta_{p} Y\left(\Delta_{p} Y\right)^*,\qquad
\Delta_{p} Y=\left(Y^1_{\tau^1_p}-Y^1_{\tau^1_{p-1}},\dots,Y^d_{\tau^d_p}-Y^d_{\tau^d_{p-1}}\right)^*
\end{align*}
for each $t\in[0,T]$.\footnote{We set $\sum_{i=p}^q\equiv0$ if $p>q$ by convention.}
In the synchronous and equidistant sampling case, a central limit theorem for the MRC estimator has been shown in \cite{CKP2010}. 
One of our main purposes is to develop an asymptotic distribution theory for the MRC estimator in the situation where observation times are possibly non-synchronous and endogenous.

\begin{rmk}[Pre-averaged Hayashi-Yoshida estimator]
\citet{CKP2010} also discuss another type of covariance estimator for non-synchronous and noisy observations, which is a pre-averaged version of the Hayashi-Yoshida estimator from \cite{HY2005} and thus called the \textit{pre-averaged Hayashi-Yoshida estimator}. Formally, it is defined as the $\mathbb{R}^d\otimes\mathbb{R}^d$-valued variable whose $(k,l)$-th entry is given by
\[
\frac{1}{\left(k_n\int_0^1g(x)\mathrm{d}x\right)^2}\sum_{i,j:t^k_i\vee t^l_j\leq T}\overline{Y}^k_{t^k_i}\overline{Y}^l_{t^l_j}1_{\{[t^k_i,t^k_{i+k_n})\cap[t^l_j,t^l_{j+k_n})\neq\emptyset\}},
\]
where $\overline{Y}^k_{t^k_i}=\sum_{p=1}^{k_n-1}g\left(\frac{p}{k_n}\right)(Y^k_{t^k_{i+p}}-Y^k_{t^k_{i+p-1}})$ and $\overline{Y}^l_{t^l_j}$ is defined analogously. A central limit theorem for the pre-averaged Hayashi-Yoshida estimator is given by \citet{CPV2013} when $t^k_i$'s are asymptotically regular in the sense that they satisfy conditions in Proposition 2.54 of \cite{MZ2012} (see Assumption (T) of \cite{CPV2013} for details). One reason why we do not focus on this estimator is that it is generally less efficient than the MRC estimator (see Section 6 of \cite{CKP2010} and Remark 3.5 of \cite{CPV2013}). Another reason is that it is difficult to generalize the limit theorem given by \cite{CPV2013} to more general sampling settings because the asymptotic (co)variance of the estimator complexly depends on the special form of the observation times provided by their Assumption (T).\footnote{This point can be solved by pre-synchronizing the data similarly to our case, i.e.~consider $\overline{Y}^k_{i}$ instead of $\overline{Y}^k_{t^k_i}$; see \citet{Koike2014phy} for details. See also Section 6.3 of \citet{Bibinger2012} where other advantages of such a procedure are discussed for the case of the subsampling approach.} On the other hand, the pre-averaged Hayashi-Yoshida estimator has an advantage in terms of robustness; see Remarks 3.3 and 4.5 of \cite{CPV2013}.
\end{rmk}

Another main purpose is to find an optimal weight function $g$, and to accomplish this we need to extend the definition of the MRC estimator for weight functions with unbounded supports. Specifically, we consider a function $g$ on $\mathbb{R}$ satisfying the following condition:
\begin{enumerate}[noitemsep,label={\normalfont[W]}]

\item \label{hypo:W} (i) $g$ is continuous and piecewise $C^1$ with a piecewise Lipschitz derivative $g'$.

(ii) For every $r>0$ there exists a positive constant $C_r$ such that $|g(x)|+|g'(x)|\leq C_r(1+|x|^2)^{-r}$ for any $x\in\mathbb{R}$.

(iii) $\int_{-\infty}^\infty g(x)^2\mathrm{d}x>0$.

\end{enumerate} 
Then, a na\"ive extension of \eqref{defPA} is as follows:
\begin{equation*}
\overline{V}^k_{i}=\sum_{p=-i+1}^{N^n_T-i}g\left(\frac{p}{k_n}\right)\left(V^k_{\tau^k_{i+p}}-V^k_{\tau^k_{i+p-1}}\right).
\end{equation*}
Unfortunately, this definition suffers from the end effect. In fact, summation by parts yields
\begin{equation*}
\overline{\epsilon}^k_{i}=-\sum_{p=-i+1}^{N^n_T-i-1}\left\{g\left(\frac{p+1}{k_n}\right)-g\left(\frac{p}{k_n}\right)\right\}\epsilon^k_{\tau^k_{i+p}}+g\left(\frac{N^n_T-i}{k_n}\right)\epsilon^k_{\tau^k_{N^n_T}}
-g\left(\frac{-i+1}{k_n}\right)\epsilon^k_{\tau^k_{0}},
\end{equation*}
hence the noise $\epsilon^k_{\tau^k_{0}}$ and $\epsilon^k_{\tau^k_{N^n_T}}$ at the end points will have some impact on the limiting variable of $\overline{\epsilon}^k_{i}$ unless $g$ has a bounded support. To avoid this problem, we take the averages of the first and the last $k_n$ distinct observations:  
\begin{equation*}%\label{jittering}
\mathring{V}^k_{0}=\frac{1}{k_n}\sum_{p=0}^{k_n-1}V^k_{\tau^k_p},\qquad
\mathring{V}^k_{T}=\frac{1}{k_n}\sum_{p=N^n_T-k_n+1}^{N^n_T}V^k_{\tau^k_p}.
\end{equation*}
This idea is commonly used in the literature of realized kernel estimators and called the \textit{jittering}; see e.g.~\cite{BNHLS2008} and \cite{BNHLS2011}. Now we define the adjusted returns $(\widetilde{\Delta}_{\tau^k_p}V^k)_{p=k_n}^{N^n_T-k_n+1}$ based on the data
$\mathring{V}^k_0,V^k_{\tau^k_{k_n}},V^k_{\tau^k_{k_n+1}},\dots,V^k_{\tau^k_{N^n_T-k_n-1}},V^k_{\tau^k_{N^n_T-k_n}},\mathring{V}^k_T$. Namely, set $\widetilde{\Delta}_{\tau^k_p}V^k=V^k_{\tau^k_p}-V^k_{\tau^k_{p-1}}$ for $p=k_n+1,\dots,N^n_T-k_n$ and
\begin{align*}
\widetilde{\Delta}_{\tau^k_{k_n}}V^k_{k_n}=V^k_{\tau^k_{k_n}}-\mathring{V}^k_{0},\qquad
\widetilde{\Delta}_{\tau^k_{N^n_T-k_n+1}}V^k=\mathring{V}^k_{T}-V_{\tau^k_{N^n_T-k_n}}.
\end{align*} 
\if0
\begin{equation*}%\label{adjreturn}
\widetilde{\Delta}_{\tau^k_p}V^k=
\left\{\begin{array}{ll}
V^k_{\tau^k_{k_n}}-\mathring{V}^k_{0}&\textrm{if }p=k_n,\\
V^k_{\tau^k_p}-V^k_{\tau^k_{p-1}}&\textrm{if }k_n+1\leq p\leq N^n_T-k_n,\\
\mathring{V}^k_{T}-V_{\tau^k_{N^n_T-k_n}}&\textrm{if }p=N^n_T-k_n+1.
\end{array}\right.
\end{equation*}
\fi
After that, our adjusted version of the pre-averaging is defined by
\begin{equation}\label{defILPA}
\widetilde{V}^k_{i,T}=\sum_{p=-i+k_n}^{N^n_T-k_n+1-i}g\left(\frac{p}{k_n}\right)\widetilde{\Delta}_{\tau^k_{i+p}}V^k
=\sum_{p=k_n}^{N^n_T-k_n+1}g\left(\frac{p-i}{k_n}\right)\widetilde{\Delta}_{\tau^k_p}V^k
\end{equation}
and $\widetilde{V}_{i,T}=(\widetilde{V}^1_{i,T},\dots,\widetilde{V}^d_{i,T})^*$. Consequently, our estimator takes the following form:
\begin{align*}
\widetilde{\MRC}[Y]^n_T=\frac{1}{\psi_2 k_n}\sum_{i=k_n}^{N^n_T-k_n+1}\widetilde{Y}_{i,T}\left(\widetilde{Y}_{i,T}\right)^*-\frac{\psi_1}{2\psi_2k_n^2}[Y,Y]^n_T,
\end{align*}
where $\psi_1=\int_{-\infty}^\infty g'(x)^2\mathrm{d}x$ and $\psi_2=\int_{-\infty}^\infty g(x)^2\mathrm{d}x$. Note that if $g$ is a continuous function on $[0,1]$ which is piecewise $C^1$ with a piecewise Lipschitz derivative $g'$ and satisfies $(\ref{weightPA})$, with extending $g$ to the whole real line by setting $g(x)=0$ for $x\notin[0,1]$ we obtain a weight function $g$ satisfying the condition [W]. In this case it can easily be shown that 
$n^{1/4}\left(\MRC[Y]^n_T-\widetilde{\MRC}[Y]^n_T\right)\to^p0$ 
as $n\to\infty$ under the assumptions of Theorem \ref{mainthm}, so we can also apply the asymptotic theory developed in this paper to the original estimator $\MRC[Y]^n_T$.

\if0
\begin{align*}
\widetilde{V}^k_{i,T}&=\sum_{p=-i+k_n}^{N^n_T-k_n+1-i}g\left(\frac{p}{k_n}\right)\Delta_{T^k_{i+p}}V^k
=\sum_{p=k_n}^{N^n_T-k_n+1}g\left(\frac{p-i}{k_n}\right)\Delta_{T^k_p}V^k\\
&=-\sum_{p=k_n}^{N^n_T-k_n}\Delta(g)^n_p V^k_{T^k_p}+g^n_{N^n_T-k_n+1-i}\mathring{V}_{T^k_{N^n_T-k_n+1}}
-g^n_{k_n-i}\mathring{V}^k_{T^k_{k_n-1}}
\end{align*}
\fi

\section{Main result}\label{main}

\subsection{Generalization of the framework of the synchronized observation times}

We start with generalizing the framework of the grid $(T_p)$ and the synchronized observation times $(\tau^k_p)$ for a technical reason. In fact, this generalization will be useful for the localization procedure used in the proof. 

%In the remainder of this section we will consider the sampling schemes $(T_p)_{p=0}^\infty$ and $(\tau^k_p)_{p=0}^\infty$ $(k=1,\dots,d)$ satisfying the following condition:
In the remainder of this section we will suppose that the sequences $(T_p)_{p=0}^\infty$ and $(\tau^k_p)_{p=0}^\infty$ $(k=1,\dots,d)$ are given \textit{a priori} and satisfies the following condition:

\begin{enumerate}[noitemsep,label={\normalfont[H]}]

\item \label{hypo:H} (i) $(T_p)$ and $(\tau^k_p)$ $(k=1,\dots,d)$ are sampling schemes.

(ii) $\tau^k_0\leq T_0$ and $T_{p-1}<\tau^k_p\leq T_p$ for any $p\geq1$ and any $k\in\{1,\dots,d\}$.

\end{enumerate}
Apparently, the sequence $(T_p)$ of the refresh times and the sequences $(\tau^k_p)$ $(k=1,\dots,d)$ of the next-ticks into $(\tau_p)$ defined in the previous section constitute one example of such sequences. 

After that, we define the quantities $N^n_t$, \eqref{defILPA} and $[Y,Y]^n_t$ based on these schemes. Then define the process $\widetilde{\MRC}[Y]^n$ by
\begin{align*}
\widetilde{\MRC}[Y]^n_t=\frac{1}{\psi_2 k_n}\sum_{i=k_n}^{N^n_t-k_n+1}\widetilde{Y}_{i,T}\left(\widetilde{Y}_{i,T}\right)^*-\frac{\psi_1}{2\psi_2k_n^2}[Y,Y]^n_t
\end{align*}
for each $t\in[0,T]$. Here, we also extend the definition of the MRC estimator to a process for the later use. Note that the summands of the first term in the right hand side of the above definition are always defined by using all the returns on $[0,T]$. We will show a functional stable central limit theorem for the process $\widetilde{\MRC}[Y]^n$ in the following.

Note that we also need to modify the construction of the stochastic basis $\mathcal{B}$ by replacing the sequence $(\mathcal{T}^n_i)$ with the increasing reordering of $\tau^k_p$'s. This is not an essential change because $\widetilde{\MRC}[Y]^n_t$ only contains variables observed at $\tau^k_p$'s.  

\begin{rmk}
Apart from the theoretical necessity, the above generalization is meaningful in terms of applications. In fact, this allows us to use the \textit{Generalized Synchronization method},  which was introduced by \citet{AFX2010}, for the data synchronization instead of the method based on refresh times. Some advantages of such a generalization are explained in Section 3.3 of \cite{AFX2010}. 
In particular, 
%since our synchronization method has no requirement on tick selection, 
this generalization implies that the MRC estimator is robust to data misplacement error, as long as these misplaced data points are within the same sampling intervals $\{(T_{p-1},T_p]\}_{p=1}^\infty$. This is important in practice because it may occur that the order of consecutive ticks is not recorded correctly.
\end{rmk} 

\subsection{Conditions}

This subsection collects the regularity conditions necessary to derive our main result. In the following $\varpi$ denotes a given positive constant. 

First, we impose the following regularity conditions on the drift and the volatility processes:
\begin{enumerate}[label={\normalfont[A\arabic*]}]

\item \label{hypo:A1} For each $j\geq1$, there is an $\mathbf{F}^{(0)}$-stopping time $\rho_j$, a bounded $\mathbf{F}^{(0)}$-progressively measurable $\mathbb{R}^d$-valued process $a(j)$, and a constant $\Lambda_j$ such that 
\begin{enumerate}[nosep,label=(\roman*)]

\item $\rho_j\uparrow\infty$ as $j\to\infty$,

\item $a(\omega^{(0)})_s=a(j)(\omega^{(0)})_s$ if $s<\rho_j(\omega^{(0)})$,

\item $E\left[\|a(j)_{t_1}-a(j)_{t_2}\|^2|\mathcal{F}_{t_1\wedge t_2}\right]
\leq \Lambda_j E\left[|t_1-t_2|^\varpi|\mathcal{F}_{t_1\wedge t_2}\right]$ for any $\mathbf{F}^{(0)}$-stopping times $t_1$ and $t_2$ bounded by $j$.

\end{enumerate}

\item \label{hypo:A2} For each $j\geq1$, there is an $\mathbf{F}^{(0)}$-stopping time $\rho_j$, a bounded, c\`adl\`ag and $\mathbf{F}^{(0)}$-adapted $\mathbb{R}^d\otimes\mathbb{R}^{d'}$-valued process $\sigma(j)$, and a constant $\Lambda_j$ such that 
\begin{enumerate}[nosep,label=(\roman*)]

\item $\rho_j\uparrow\infty$ as $j\to\infty$,

\item $\sigma(\omega^{(0)})_s=\sigma(j)(\omega^{(0)})_s$ if $s<\rho_j(\omega^{(0)})$,

\item $E\left[\|\sigma(j)_{t_1}-\sigma(j)_{t_2}\|^2|\mathcal{F}_{t_1\wedge t_2}\right]
\leq \Lambda_j E\left[|t_1-t_2|^\varpi|\mathcal{F}_{t_1\wedge t_2}\right]$ for any $\mathbf{F}^{(0)}$-stopping times $t_1$ and $t_2$ bounded by $j$.

\end{enumerate}

\end{enumerate}

\begin{rmk}

\ref{hypo:A1} and \ref{hypo:A2} hold true if $a$ and $\sigma$ are It\^o semimartingales, for example, hence they are satisfied by most practical stochastic volatility models, e.g.~the Heston model. This type of continuity condition on the coefficient processes are necessary due to the irregularity of observation times as \cite{HY2011}. 
In fact, in that paper the maximum duration $r_n(t)$ of sampling times up to the time $t$ (defined in page 2419 of that paper) is only required to satisfy the condition $r_n(t)=o_p(n^{-\xi})$ for some $\xi\in(\frac{4}{5},1)$. The discussion in Section 12 of \cite{HY2011} shows that this is because they assume that the volatility process is $(\frac{1}{2}-\lambda)$-H\"older continuous for any $\lambda>0$. In this paper we assume that the quantity corresponding to $r_n(t)$ (defined in \eqref{A4}) satisfies $r_n(t)=o_p(n^{-\xi})$ for every $\xi\in(0,1)$, so we only need a weaker continuity condition than the one of \cite{HY2011}.

\end{rmk}

Second, we impose a regularity condition on the noise process. We denote by $\Upsilon$ the covariance matrix process of the noise process, i.e.~$\Upsilon_t(\cdot)=\int zz^*Q_t(\cdot,\mathrm{d}z)$.
\begin{enumerate}[label={\normalfont[A3]}]

\item \label{hypo:A3} There is a constant $\Gamma>4$ and a sequence $(\rho_{j})_{j\geq1}$ of $\mathbf{F}^{(0)}$-stopping times increasing to infinity such that 
\[
\sup_{\omega^{(0)}\in\Omega^{(0)},t<\rho_{j}(\omega^{(0)})}\int \|z\|^\Gamma Q_t(\omega^{(0)},\mathrm{d}z)<\infty.
\]
Moreover, for each $j$ there is a bounded c\`adl\`ag $\mathbf{F}^{(0)}$-adapted $\mathbb{R}^d\otimes\mathbb{R}^d$-valued process $\Upsilon(j)_t$ and a constant $\Lambda_j$ such that
\begin{enumerate}[nosep,label=(\roman*)]

\item  $\Upsilon(j)(\omega^{(0)})_t=\Upsilon(\omega^{(0)})_t$ if $t<\rho_j(\omega^{(0)})$,

\item $E\left[\|\Upsilon(j)_{t_1}-\Upsilon(j)_{t_2}\|^2|\mathcal{F}_{t_1\wedge t_2}\right]
\leq \Lambda_j E\left[|t_1-t_2|^\varpi|\mathcal{F}_{t_1\wedge t_2}\right]$ for any $\mathbf{F}^{(0)}$-stopping times $t_1$ and $t_2$ bounded by $j$.

\end{enumerate}

\end{enumerate}

\begin{rmk}

The local boundedness of the moment process is necessary for verifying a Lyapunov-type condition and the negligibility of edge effects. The continuity of the covariance matrix process is necessary due to the same reason as for \ref{hypo:A2}.

\end{rmk}

Third, we impose the following condition on the grid and the synchronized observation times:
\begin{enumerate}[label={\normalfont[A4]}]

\item \label{hypo:A4} It holds that
\begin{equation}\label{A4}
r_n(t):=\sup_{p\geq0}(T_p\wedge t-T_{p-1}\wedge t)=o_p(n^{-\xi})
\end{equation}
as $n\to\infty$ (note that $T_{-1}=0$ by convention) for every $t>0$ and every $\xi\in(0,1)$. Moreover, for each $n$ we have an $\mathbf{F}^{(0)}$-optional positive-valued process $G^n_t$, an $\mathbf{F}^{(0)}$-optional $[0,1]^d\otimes[0,1]^d$-valued process $\chi^n_t=(\chi^{n,kl}_t)_{1\leq k,l\leq d}$ and a random subset $\mathcal{N}^n$ of $\mathbb{Z}_+$ satisfying the following conditions:

\begin{enumerate}[nosep,label={\normalfont(\roman*)},ref={\normalfont\ref{hypo:A4}(\roman*)}]

\item \label{hypo:A4i} $\{(\omega,p)\in\Omega\times\mathbb{Z}_+:p\in\mathcal{N}^n(\omega)\}$ is a measurable set of $\Omega\times\mathbb{Z}_+$. Moreover, there is a constant $\kappa\in(0,\frac{1}{2})$ such that $\#(\mathcal{N}^n\cap\{p:T_p\leq t\})=O_p(n^\kappa)$ as $n\to\infty$ for every $t>0$.

\item \label{hypo:A4ii} $E[n(T_{p+1}-T_p)\big|\mathcal{F}^{(0)}_{T_p}]=G^n_{T_p}$ and $E[1_{\{\tau^k_{p+1}=\tau^l_{p+1}\}}|\mathcal{F}^{(0)}_{T_p}]=\chi^{n,kl}_{T_p}$ for every $n$, every $\mathbb{Z}_+\setminus\mathcal{N}^n$ and any $k,l=1,\dots,d$.

\item \label{hypo:A4iii} There is a c\`adl\`ag $\mathbf{F}^{(0)}$-adapted positive valued process $G$ such that $\sup_{0\leq t\leq T}|G^n_t-G_t|=O_p(n^{-\varpi})$ as $n\to\infty$. Moreover, $G_{t-}>0$ for all $t>0$.

\item \label{hypo:A4iv} There is a c\`adl\`ag $(\mathcal{F}^{(0)}_t)$-adapted $[0,1]^d\otimes[0,1]^d$-valued process $\chi$ such that $\sup_{0\leq t\leq T}\|\chi^n_t-\chi_t\|=O_p(n^{-\varpi})$ as $n\to\infty$. 

\item \label{hypo:A4v} For each $j\geq1$ there is a c\`adl\`ag $\mathbf{F}^{(0)}$-adapted positive-valued process $G(j)$, a c\`adl\`ag $\mathbf{F}^{(0)}$-adapted $[0,1]^d\otimes[0,1]^d$-valued process $\chi(j)$, an $\mathbf{F}^{(0)}$-stopping time $\rho_j$, and a constant $\Lambda_j$ such that $\rho_j\uparrow\infty$ as $j\to\infty$ and $G(\omega^{(0)})_t=G(j)(\omega^{(0)})_t,\chi(\omega^{(0)})_t=\chi(j)(\omega^{(0)})_t$ if $t<\rho_j(\omega^{(0)})$ and 
\begin{align*}
E\left[\|G(j)_{t_1}-G(j)_{t_2}\|^2+\|\chi(j)_{t_1}-\chi(j)_{t_2}\|^2|\mathcal{F}_{t_1\wedge t_2}\right]
\leq \Lambda_j E\left[|t_1-t_2|^\varpi|\mathcal{F}_{t_1\wedge t_2}\right]
\end{align*}
for every $j$ and any $\mathbf{F}^{(0)}$-stopping times $t_1$ and $t_2$ bounded by $j$.

\end{enumerate}

\end{enumerate}

\begin{rmk}\label{rmk:A4}

(i) [A4] is motivated by multiplicative error modeling of durations, which is widely used in financial econometrics (cf.~\citet{Hautsch2012}). Namely, the sequence $D_p=T_{p}-T_{p-1}$ of durations is often modeled as $D_p=\Psi_p\eta_p$, where $\Psi_p=E[D_p|\mathcal{F}_{T_{p-1}}]$, $p=1,2,\dots$ are the conditional (expected) durations. Especially, we have $E[\eta_p]=1$, hence the process $\Psi_p$ controls the frequency of the sampling times $T_p$. Consequently, it is natural to assume an \ref{hypo:A4iii} type condition in our context, which asserts that the scaled conditional durations $G^n_{T_p}=n\Psi_p$ converges to some process $G$, ensuring the existence of the asymptotic covariance matrix of our estimator. %ensuring that quantities appearing in the asymptotic variance of our estimator indeed converge. 
We also remark that conditions like \eqref{A4} and \ref{hypo:A4iii} are widely used in studies of irregular observations in our context; see \cite{BNHLS2011}, \cite{Koike2014phy} and Chapter 14 of \cite{JP2012} for instance.

%(i) The condition $\xi>\frac{5}{6}$ in [A4](i) is necessary for ensuring that the MRC estimator achieves the optimal convergence rate $n^{-1/4}$. In many cases $r_n(T)=o_p(n^{-\xi})$ holds for any $\xi\in(0,1)$. For example, if $(T_p)$ is a sequence of Poisson arrival times with the intensity $n\lambda$ ($\lambda>0$), then $r_n(T)=O_p(n^{-1}\log n)$ by Corollary 1 of \cite{RT1973}.  

\noindent (ii) Condition \ref{hypo:A4iv} on the limiting behavior of the functional $\chi^n$ is required to deal with the ($\mathcal{F}^{(0)}$-conditional) covariance between $\epsilon^k_{\tau^k_p}$ and $\epsilon^l_{\tau^l_p}$, which is given by $\Upsilon^{kl}_{\tau^k_p}1_{\{\tau^k_p=\tau^l_p\}}$. This type of condition also appears in \citet{BM2014} due to the same reason as ours (see Assumption 3.2 (iii)-(iv) of \cite{BM2014}). Note that $\chi^{n,kl}_s\equiv1$ in the synchronous case and $\chi^{n,kl}_s\equiv1_{\{k=l\}}$ in the completely non-synchronous case, so this condition is satisfied in these two cases.

\noindent (iii) The continuity condition \ref{hypo:A4v} imposed on the limiting processes are necessary for proving that we can ignore the impact of the time endogeneity on the asymptotic distribution of the estimator. Note that this condition itself does not rule out any kind of time endogeneity.

%\noindent (iii) Since we can always take $\chi^{kk,n}\equiv1$, [A4](iv) is automatically satisfied for the case that $k=l$ with $\chi^{kl}\equiv1$.

\noindent (iv) The set $\mathcal{N}^n$ represents an exceptional set of indices for which the equations in condition \ref{hypo:A4ii} are invalid. Introducing this type of set is useful to ensure the stability of the condition under the localization procedure used in the proof; see Lemma \ref{HJYlem4.1}. It also allows the existence of outliers in the durations. For example, we can consider the situation where $T_p=\log n/n$ if $p\leq n^\kappa$ and $T_p=1/n$ otherwise. 

\noindent (v) \ref{hypo:A4} implies that $N^n_T/n$ converges to a non-zero random variable in probability (see Lemma \ref{HJYlem2.2}). In particular, this condition connects the number of (synchronized) observations with the parameter $n$ to drive our asymptotic theory.
 
%Note that this is not an example of endogenous sampling schemes. Such examples are given in the next section.   
\end{rmk}

To illustrate \ref{hypo:A4}, we give two simple but commonly used examples satisfying \ref{hypo:A4} when we consider the case that $(T_p)$ is defined as the refresh times of $\{(t^k_i)\}_{k=1}^d$ and $(\tau^1_p),\dots,(\tau^d_p)$ are defined as the next-tick interpolations to $(T_p)$ as in the previous section.
\begin{example}[Poisson sampling]
Let $(t^k_i)$ be a sequence of Poisson arrival times with the intensity $np_k$ for each $k$ and suppose that $(t^1_i),\dots,(t^d_i)$ are mutually independent and independent of $Y$. Then \ref{hypo:A4} is satisfied with $\mathcal{N}^n$ being empty. In fact, it is easy to show that \ref{hypo:A4iv} holds true with $\chi_t$ being the identity matrix of order $d$, while \eqref{A4} follows from Corollary 1 of \cite{RT1973}. \ref{hypo:A4iii} is satisfied with
\begin{equation}\label{poissonG}
G_s=\sum_{k=1}^d\sum_{1\leq l_1<\cdots<l_k\leq d}\frac{(-1)^{k-1}}{p_{l_1}+\cdots p_{l_k}}.
\end{equation}
This can be proven as follows. Set $p=\sum_{k=1}^dp_k$ and let $\widetilde{N}$ be a Poisson process with the intensity $np$. Let $(\eta_j)_{j=1}^\infty$ be a sequence of i.i.d.~random variables such that $P(\eta_j=k)=p_k/p$, $k=1,\dots,d$. We assume that $(\eta_j)$ is independent of $\widetilde{N}$. For each $k\in\{1,\dots,d\}$ define the process $N^{(k)}$ by $N^{(k)}_t=\sum_{j=1}^{\widetilde{N}_t}1_{\{\eta_j=k\}}$. A short calculation shows that $N^{(k)}$ is a Poisson process with the intensity $np_k$. Therefore, Theorem 6 of \cite{CA1968} implies that $N^{(1)},\dots,N^{(d)}$ are independent. This fact yields $E[n(T_{p+1}-T_p)|\mathcal{F}^{(0)}_{T_{p}}]=p^{-1}E[\min\{j:\{\eta_1,\dots,\eta_j\}=\{1,\dots,d\}\}]$. Now \eqref{poissonG} follows from Eq.(6) of \cite{VonSchelling1954}. \ref{hypo:A4v} is then obvious.

\end{example}

\begin{example}[Times generated by hitting barriers]\label{Exhit}
Let us focus on the univariate case, i.e.~$d=d'=1$. Note that in this case we have $T_i=t^1_i$. Then, a common example of endogenous observation times is a class of stopping times generated by hitting times (cf.~Section 4.4 of \cite{Fu2010b} and Example 4 of \cite{LMRZZ2014}).  
Specifically, suppose that $\sigma^2_t$ is continuous and bounded away from 0 and define
\begin{equation}\label{defhit}
t^1_0=0,\qquad t^1_{i+1}=\inf\left\{t>t^1_{i}:M_t-M_{t^1_i}=-\alpha/\sqrt{n}\textrm{ or }M_t-M_{t^1_i}= \beta/\sqrt{n}\right\}
\end{equation}
for positive constants $\alpha,\beta$, where $M_t=\int_0^t\sigma_s\mathrm{d}W_s$. This observation scheme satisfies \ref{hypo:A4} with $\mathcal{N}^n$ being empty. In fact, using a representation of a continuous local martingale with Brownian motion, we have
\begin{align*}
P\left( M_{t^1_{i+1}}-M_{t^1_i}=-\alpha/\sqrt{n}\big|\mathcal{F}^{(0)}_{t^1_i}\right)=\beta/(\alpha+\beta),\qquad
P\left( M_{t^1_{i+1}}-M_{t^1_i}=\beta/\sqrt{n}\big|\mathcal{F}^{(0)}_{t^1_i}\right)=\alpha/(\alpha+\beta).
\end{align*}
%Combining the above formula with Proposition 2.1 of \cite{Obloj2004} (again using a representation of a continuous local martingale with Brownian motion), we obtain the following result: for each $r\geq1$ there exists a positive constant $C_r$ such that
%$E\left[\left|\int_{t^1_i}^{t^1_{i+1}}\sigma^2_s\mathrm{d}s\right|^r\right]\leq C_r n^{-r}$
%for every $n,i$. 
Especially, it holds that $\sup_iE[|\sqrt{n}(M_{t^1_{i+1}}-M_{t^1_i})|^r]<\infty$ for any $r>0$. Therefore, an analogous argument to the proof of Proposition 2.1 from \cite{Obloj2004} yields the following result: for each $r\geq1$ there exists a positive constant $C_r$ such that
$E[|\int_{t^1_i}^{t^1_{i+1}}\sigma^2_s\mathrm{d}s|^r]\leq C_r n^{-r}$
for every $n,i$. 
In particular, this inequality yields \eqref{A4} because $\sigma^2_t$ is bounded away from 0. Moreover, noting that $E\left[(M_{t^1_{i+1}}-M_{t^1_i})^2|\mathcal{F}^{(0)}_{t^1_i}\right]=\sigma^2_{t^1_i}E\left[t^1_{i+1}-t^1_{i}|\mathcal{F}^{(0)}_{t^1_i}\right]+o_p(n^{-1})$ as $n\to\infty$ uniformly in $i\leq N^n_T$ because of the continuity of $\sigma$, we also obtain \ref{hypo:A4iii} with $G_t=\alpha\beta/\Sigma_t$. \ref{hypo:A4iv}--(v) are obvious.

%Note that in this example the observation times are endogenous if $\alpha\neq\beta$.  
\end{example}

We further discuss about \ref{hypo:A4} in Section \ref{discussion:A4}.

\subsection{Result}

The statement of our main theorem requires the notion of \textit{stable convergence}, which is common in this area. We however need to note that in our case the stochastic basis $\mathcal{B}$, which supports our observation data, changes as $n$ varies, hence the common definition of stable convergence used in the literature (cf.~Definition 1 of \cite{PV2010}) needs to be modified here. This has been done in page 47 of \cite{JP2012} as follows. Let $(\mathcal{X},\mathcal{A},\mathbb{P})$ be a probability space and assume that we have a random element $Z_n$ taking values in a Polish space $S$ and defined on an extension $(\mathcal{X}_n,\mathcal{A}_n,\mathbb{P}_n)$ of $(\mathcal{X},\mathcal{A},\mathbb{P})$ for each $n\in\mathbb{N}\cup\{\infty\}$. In this setup the sequence $Z_n$ is said to \textit{converge stably in law} to $Z_\infty$ if $\mathbb{E}_n[Uf(Z_n)]\rightarrow \mathbb{E}_\infty[Uf(Z_\infty)]$ for any $\mathcal{A}$-measurable bounded random variable $U$ and any bounded continuous function $f$ on $S$. Then we write $Z_n\to^{d_s}Z$. The most important property of this notion is the following: For each $n\in\mathbb{N}$, let $V_n$ be a real-valued variable on $(\mathcal{X}_n,\mathcal{A}_n,\mathbb{P}_n)$, and suppose that the sequence $V_n$ converges in probability to a variable $V$ on $(\mathcal{X},\mathcal{A},\mathbb{P})$. Then we have $(Z_n,V_n)\to^{d_s}(Z_\infty,V)$ for the product topology on the space $S\times\mathbb{R}$, provided that $Z_n\to^{d_s}Z$.

Now we are ready to state the main theorem of this paper.
\begin{theorem}\label{mainthm}

Suppose that \ref{hypo:W}, \ref{hypo:H} and \ref{hypo:A1}--\ref{hypo:A4} are satisfied. Then
%\if0
\begin{equation}\label{CLT}
n^{1/4}\left(\widetilde{\MRC}[Y]^n-[X,X]\right)\to^{d_s}\mathcal{W}\qquad\mathrm{in}\ \mathbb{D}_T^{d\times d}
\end{equation}
%\fi
%$n^{1/4}\left(\MRC[Y]^n-[X,X]\right)\to^{d_s}\mathcal{W}$ in $\mathbb{D}([0,T];\mathbb{R}^{d}\otimes\mathbb{R}^d)$ 
as $n\to\infty$, where $\mathcal{W}$ is an $\mathbb{R}^d\otimes\mathbb{R}^d$-valued continuous process defined on an extension of $\mathcal{B}^{(0)}$, which is conditionally on $\mathcal{F}^{(0)}$ centered Gaussian with independent increments, and with conditional covariances
\begin{align}
\widetilde{E}\left[\mathcal{W}^{kl}_t\mathcal{W}^{k'l'}_t|\mathcal{F}^{(0)}\right]&=\frac{2}{\psi_2^{2}}\int_0^t\left[\Phi_{22}\theta\left\{\Sigma^{kk'}_s\Sigma^{ll'}_s+\Sigma^{kl'}_s\Sigma^{lk'}_s\right\}G_s
+\frac{\Phi_{11}}{\theta^3}\left\{\widetilde{\Upsilon}^{kk'}_s\widetilde{\Upsilon}^{ll'}_s+\widetilde{\Upsilon}^{kl'}_s\widetilde{\Upsilon}^{lk'}_s\right\}\frac{1}{G_s}\right.\nonumber\\
&\hphantom{=\frac{2}{\psi_2^{2}}[}\left.+\frac{\Phi_{12}}{\theta}\left\{\Sigma^{kk'}_s\widetilde{\Upsilon}^{ll'}_s+\Sigma^{lk'}_s\widetilde{\Upsilon}^{kl'}_s+\Sigma^{ll'}_s\widetilde{\Upsilon}^{kk'}_s+\Sigma^{kl'}_s\widetilde{\Upsilon}^{lk'}_s\right\}\right]\mathrm{d}s\label{avar}
\end{align}
for $k,l,k,l'=1,\dots,d$ and $t\in\mathbb{R}_+$. Here, $\widetilde{\Upsilon}$ is the $\mathbb{R}^d\otimes\mathbb{R}^d$-valued process defined by $\widetilde{\Upsilon}^{kl}_s=\Upsilon^{kl}_s\chi^{kl}_s$, and 
\begin{gather*}
\Phi_{22}=\int_{0}^{\infty}\phi_{g,g}(y)^2\mathrm{d}y,\qquad
\Phi_{12}=\int_{0}^{\infty}\phi_{g,g}(y)\phi_{g',g'}(y)\mathrm{d}y,\qquad
\Phi_{11}=\int_{0}^{\infty}\phi_{g',g'}(y)^2\mathrm{d}y
\end{gather*}
with $\phi_{u,v}$ being the function on $\mathbb{R}$ defined by $\phi_{u,v}(y)=\int_{-\infty}^\infty u(x-y)v(x)\mathrm{d}x$.

\end{theorem}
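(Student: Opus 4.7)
My plan is to adapt the standard four-step pre-averaging CLT strategy to this framework, where the new features are the unbounded-support weight $g$ (handled via the jittering in \eqref{defILPA}) and the endogenous, non-synchronous observation times (handled via \ref{hypo:A4}).

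I would begin with a localization argument: using the stopping times $\rho_j$ supplied by \ref{hypo:A1}--\ref{hypo:A4}, one replaces $a,\sigma,\Upsilon,G,\chi$ by uniformly bounded versions satisfying the $L^2$-H\"older estimates globally, and turns the $O_p$ bounds on $r_n(t)$ and $\#\mathcal{N}^n$ into deterministic ones; stable convergence is preserved under such reductions. Next I decompose $\widetilde{Y}_{i,T}=\widetilde{X}_{i,T}+\widetilde{\epsilon}_{i,T}$ and expand the product. The bias correction $\psi_1[Y,Y]^n/(2\psi_2 k_n^2)$ cancels the leading noise-noise piece, and the drift contribution is negligible at rate $n^{1/4}$ because $r_n(t)=o_p(n^{-\xi})$ for every $\xi<1$; hence I may replace $X$ by the continuous local martingale $\int_0^\cdot \sigma_u \mathrm{d}W_u$. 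After these reductions the problem is to prove a functional stable CLT for an array of the form $n^{1/4}\sum_p \zeta^n_{p,t}$, $t\in[0,T]$, where each $\zeta^n_{p,t}$ is $\mathcal{F}_{T_p}$-measurable and asymptotically a martingale difference with respect to $(\mathcal{F}_{T_p})_p$.

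I would then invoke the stable CLT for triangular arrays from \cite{JP2012}, which requires (a) the conditional quadratic characteristic $\sum_p E[\zeta^n_p(\zeta^n_p)^{*}|\mathcal{F}_{T_{p-1}}]$ to converge in probability to the right-hand side of \eqref{avar}, (b) a Lyapunov-type condition, which follows from the $\Gamma$-th moment bound in \ref{hypo:A3} and the moment control on $\Delta X$ after localization, and (c) asymptotic orthogonality to every bounded $\mathbf{F}^{(0)}$-martingale $N$ and to each $W^k$, which identifies $\mathcal{W}$ as $\mathcal{F}^{(0)}$-conditionally Gaussian with independent increments. The computation in (a) is the core of the proof. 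Expanding $\widetilde{X}_{i,T}$ and $\widetilde{\epsilon}_{i,T}$ via \eqref{defILPA}, three families of terms appear, proportional respectively to bilinear forms in $\Sigma$ weighted by $G$, in $\widetilde{\Upsilon}$ weighted by $1/G$, and in the pairing $\Sigma,\widetilde{\Upsilon}$. The constants $\Phi_{22}$, $\Phi_{11}$, $\Phi_{12}$ arise through Riemann-sum convergence of double sums such as $k_n^{-1}\sum_i g(\tfrac{p-i}{k_n})g(\tfrac{p'-i}{k_n})\to\phi_{g,g}((p-p')/k_n)$, followed by $k_n^{-1}\sum_{p'}\phi_{g,g}((p-p')/k_n)^2\to\int_{-\infty}^{\infty}\phi_{g,g}(y)^2\mathrm{d}y=2\Phi_{22}$ (which accounts for the factor $2/\psi_2^2$). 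The sums over $(T_p)$ are converted to integrals $\mathrm{d}s$ by replacing $T_{p+1}-T_p$ with its conditional expectation $G^n_{T_p}/n$ (via \ref{hypo:A4ii}, with exceptional indices absorbed by \ref{hypo:A4i}) and then invoking \ref{hypo:A4iii}--\ref{hypo:A4v} together with the continuity of $\Sigma,\Upsilon$.

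The main obstacle is the assertion built into \eqref{avar} that time endogeneity contributes \emph{no} correction term; a naive analysis would predict skewness-type bias terms analogous to those in \eqref{lzz} of \cite{LZZ2012}. The mechanism by which they vanish here is that each $\widetilde{X}_{i,T}$ aggregates $k_n\asymp\sqrt{n}$ conditionally (nearly) independent diffusive increments of order $n^{-1/2}$, so that, conditionally on the duration pattern within the window, $\widetilde{X}_{i,T}$ is asymptotically Gaussian with a covariance depending on the durations only through the $\mathcal{F}^{(0)}$-adapted summary $G$. Making this rigorous requires a block-by-block argument: within each pre-averaging window I replace the random durations by their $\mathcal{F}^{(0)}_{T_p}$-conditional expectations and show that the accumulated error is $o_p(n^{-1/4})$ uniformly on $[0,T]$. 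This is precisely where the hypothesis $r_n(t)=o_p(n^{-\xi})$ for \emph{every} $\xi\in(0,1)$ is used, so that $k_n\, r_n(t)$ is negligible at arbitrarily fast polynomial rate, together with the continuity hypothesis \ref{hypo:A4v} on $G$ and $\chi$ and the cardinality bound on $\mathcal{N}^n$ from \ref{hypo:A4i} which absorbs occasional outlier durations.
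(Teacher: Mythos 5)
Your proposal follows essentially the same route as the paper: localization via the $\rho_j$'s and a regularized sampling scheme, decomposition of the pre-averaged product into signal/noise pieces, cancellation of the leading noise bias by $[Y,Y]^n$, reduction to a martingale-difference array indexed by the refresh-time grid, and an application of Jacod's stable limit theorem (Theorem 2.2.15 of \cite{JP2012}) where the conditional bracket is computed via Riemann-sum limits giving $\Phi_{11},\Phi_{12},\Phi_{22}$ and the random durations are replaced by $G^n$ and then $G$ using \ref{hypo:A4}. The step you single out as the main obstacle --- replacing random durations by $\mathcal{F}^{(0)}_{T_p}$-conditional expectations to kill the endogeneity bias, especially in the drift--diffusion cross term --- is exactly what the paper isolates in Lemma \ref{HYlem13} and the duration-replacement arguments inside Lemma \ref{lemenergy}, so the approach matches.
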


\begin{rmk}\label{rmk:mainthm}

(i) The above theorem tells us that under our assumptions the observation times affect the asymptotic distribution of the MRC estimator only through the asymptotic conditional duration process $G$ and the limiting process $\chi$ measuring the degree of the non-synchronicity. In particular, the time endogeneity has no impact on the asymptotic distribution. This contrasts with the case of the realized volatility in a pure diffusion setting, where the time endogeneity can cause a bias in the asymptotic distribution as demonstrated in \cite{Fu2010b} and \cite{LMRZZ2014}. 

\noindent (ii) It is also worth pointing out that the effect of the observation times is not through the \textit{Asymptotic Quadratic Variation of Time}, unlike the case of the realized volatility as described in \cite{MZ2009} for instance. Especially, even the randomness of the durations plays no role in the asymptotic distribution of the MRC estimator in the current setup. This is again different from the case of the realized volatility, where the randomness of the durations inflates the asymptotic variance. 

\noindent (iii) Our result further suggests that the interpolation errors caused by the synchronization does not matter in the first order approximation of the estimator, which has already been pointed out in Section 3.6 of \cite{CKP2010}. This is also different from the case of the Hayashi-Yoshida estimator in a pure diffusion setting: See Section 3.2 of \cite{Bibinger2012} for details. We mention that the treatment of the time endogeneity for the Hayashi-Yoshida estimator is much more complex than ours. Recently \cite{PM2015} have dealt with this topic in a pure diffusion setting. \cite{RR2012} discuss a related topic in a setting with microstructure noise modeled by the concept of \textit{uncertainty zones}. More precisely, in their model the observations of the latent process can be estimated and they show that the Hayashi-Yoshida estimator based on these estimated observations consistently estimates the quadratic covariation. However, its asymptotic distribution is not known so far.

\noindent (iv) Here we should note that our result \textit{does not} imply that the randomness, the endogeneity and the non-synchronicity of observation times play no role in the limit of our statistical experiments. Investigating this topic apparently requires more sophisticated arguments and is beyond the scope of this paper. We only refer to the recent work of \citet{Ogihara2014noise}, which has developed the LAN property for non-synchronously observed (Gaussian) diffusion processes with noise when observation times are random but independent of the observed processes. This work has also found that the observation times affect the Fisher information only through their spot intensity process, which corresponds to the process $1/G$ in our case if the observations are synchronous.  

\noindent (v) We further remark that our condition \ref{hypo:A4} plays a crucial role to reduce the effects of the randomness of observation times. In fact, the recent work of \citet{BM2014} has pointed out the role of the \textit{long-run variation of time} in the asymptotic distribution of the (generalized) multi-scale estimator of (\citealt{Z2006,B2011a}). The well-known relation between pre-averaging and multi-scale estimators (cf.~Section 3.5 of \cite{CKP2010} and Section 2.2 of \cite{BM2014}) suggests that this would also be the case in our setting. Indeed, \ref{hypo:A4} characterizes the asymptotic long-run variation of time in terms of $G$; See Proposition \ref{LRVT}.
\end{rmk}

\begin{rmk}
In Example \ref{Exhit}, the stable convergence result of Theorem \ref{mainthm} still holds true when we replace $M$ in $(\ref{defhit})$ by $X$. This can be shown as follows. Define the process $Z$ by $Z_t=\exp\left(\int_0^t a_s/\sigma_s\mathrm{d}W_s-\frac{1}{2}\int_0^t a_s^2/\sigma^2_s\mathrm{d}s\right)$ for each $t\geq0$.
As is well known, $Z_t$ is a positive continuous local martingale. Therefore, by a localization argument we may assume that both $Z$ and $1/Z$ are bounded. In particular, $Z$ is a martingale, so we can define a probability measure $\widetilde{P}^{(0)}_T$ on $(\Omega^{(0)},\mathcal{F}^{(0)})$ by $\widetilde{P}^{(0)}_T(E)=P^{(0)}(1_E Z_T)$. $\widetilde{P}^{(0)}_T$ is obviously equivalent to the probability measure $P^{(0)}$. Set $W'_t=W_t-\int_0^t a_s/\sigma_s\mathrm{d}s$ for each $t$. Then, by the Girsanov theorem $(W'_t)_{0\leq t\leq T}$ is a standard Wiener process on $(\Omega^{(0)},\mathcal{F}^{(0)},(\mathcal{F}^{(0)}_t)_{0\leq t\leq T},\widetilde{P}^{(0)}_T)$ and it holds that $X_t=\int_0^t\sigma_s\mathrm{d}W'_s$. Hence \ref{hypo:A4} holds true under $\widetilde{P}^{(0)}_T$. Moreover, \ref{hypo:A1}--\ref{hypo:A3} are obviously satisfied under $\widetilde{P}^{(0)}_T$. Therefore, \eqref{CLT} holds true under $\widetilde{P}^{(0)}_T$. Since the stable convergence is stable under equivalent changes of probability measures, \eqref{CLT} also holds true under the original probability measure $P^{(0)}$. It is worth mentioning that the continuity condition on the drift $a$ is unnecessary in this case.
\end{rmk}

\begin{rmk}[Feasible limit theorem]
The stable convergence \eqref{CLT} allows us to consider Studentization of the MRC estimator, provided that some consistent estimators for the asymptotic conditional covariances \eqref{avar} are available. Such estimators can be constructed by a kernel-based approach as in Section 4.3 of \cite{Koike2015pthy}, for example. It would also be possible to apply other approaches such as histogram-type estimators of \cite{Bibinger2012,BM2014} or a subsampling method of \cite{CPV2013} to our case.
\end{rmk}

\begin{rmk}[Serially dependent noise]
The MRC estimator is inconsistent if the error process is serially dependent (see Lemma 1 of \citet{HP2012}). This is because the bias correction term $(\psi_1/2\psi_2k_n^2)[Y,Y]^n_T$ does not correct the bias in the presence of such serial dependence. In fact, if the serial dependence is sufficiently weak, the bias is proportional to the long-run covariance matrix of the noise. So, if the bias is correctly adjusted, the MRC estimator is still consistent, and it would even enjoy a central limit theorem where the asymptotic variance would be the same as \eqref{avar} except that the covariance matrix $\Upsilon_t$ of the noise would change to the long-run covariance matrix (see also Theorem 1 of \cite{HP2012}).    
\end{rmk}

\section{Discussion about the assumption on observation times}\label{discussion:A4}

\subsection{Connection with the tricity}

Let us focus on the univariate case (so we have $T_p=t^1_p$). One striking feature of the time endogeneity in a pure diffusion setting is that the (scaled) cubic power variation, or the \textit{tricity}
\[
\sqrt{n}\sum_{p=1}^{N^n_t}(X_{T_p}-X_{T_{p-1}})^3
\]
plays an important role in the asymptotic theory of the realized volatility. This is natural in a sense because the time endogeneity possibly causes the skewness of the returns $(X_{T_{p+1}}-X_{T_{p}})_{p\in\mathbb{Z}_+}$ even if the volatility process $\sigma$ is deterministic; see Example \ref{Exhit} for instance. More generally, for a given one-dimensional Wiener process $W$ and for any probability measure $\mu$ on $\mathbb{R}$ such that $\int x\mu(\mathrm{d}x)=0$, we can find a sequence $(S_p)_{p\in\mathbb{Z}_+}$ of stopping times such that $W_{S_{p+1}}-W_{S_p}\overset{i.i.d.}{\sim}\mu$ (cf.~Example 5 of \cite{LMRZZ2014}). 

On the other hand, our condition \ref{hypo:A4} makes the tricity of the pre-averaged data asymptotically negligible:
\begin{prop}\label{proplzz}
Under the assumptions of Theorem \ref{mainthm}, it holds that
\begin{align*}
\frac{\sqrt{n}}{k_n^{3/2}}\sum_{i=k_n}^{N^n_t-k_n+1}(\widetilde{X}_{i,T})^3\to^p0
\end{align*}
as $n\to\infty$ for any $t\in[0,T]$ (recall that $\widetilde{X}_{i,T}$ is defined by \eqref{defILPA}).
\end{prop}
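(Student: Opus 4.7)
The plan is to bound the normalized sum in $L^1$ by exploiting the martingale structure of $X$, with only the "diagonal" piece requiring a crude bound and the off-diagonal pieces being controlled via iterated conditional expectations. First I would apply the usual localization procedure from the proof of Theorem \ref{mainthm}, so we may assume that $a$, $\sigma$, $\Upsilon$, $G$ and $\chi$ are globally bounded and globally Hölder, that the stopping times satisfy $r_n(T)=O_p(n^{-\xi})$ for every $\xi\in(0,1)$, and that $N^n_T=O_p(n)$ (Lemma \ref{HJYlem2.2}). A direct moment estimate shows the drift part of $X$ contributes a term of order $k_n\cdot n^{-1}=O(n^{-1/2})$ to $\widetilde X_{i,T}$, so after cubing and summing the drift contribution is $o_p(1)$ once multiplied by $\sqrt n/k_n^{3/2}$. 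Hence I would reduce to $X_t=\int_0^t\sigma_s\,\mathrm dW_s$.

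Writing $\eta_p:=\widetilde\Delta_{\tau_p}X$ and $g^{\,i}_p:=g((p-i)/k_n)$, multilinear expansion and a Fubini swap give
\[
\frac{\sqrt n}{k_n^{3/2}}\sum_{i=k_n}^{N^n_t-k_n+1}(\widetilde X_{i,T})^3
=\frac{\sqrt n}{k_n^{3/2}}\sum_{p_1,p_2,p_3} H(p_1,p_2,p_3)\,\eta_{p_1}\eta_{p_2}\eta_{p_3},
\qquad
H(p_1,p_2,p_3):=\sum_{i}g^{\,i}_{p_1}g^{\,i}_{p_2}g^{\,i}_{p_3}.
\]
By property \ref{hypo:W}(ii) the kernel $H$ satisfies $|H(p_1,p_2,p_3)|\le Ck_n$ and has "effective support" of width $\asymp k_n$ in each argument. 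I would split the triple sum into three regions: $D_1=\{p_1=p_2=p_3\}$, $D_2=\{|\{p_1,p_2,p_3\}|=2\}$, and $D_3=\{p_1,p_2,p_3\text{ distinct}\}$.

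For $D_1$, by the BDG inequality and boundedness of $\sigma$ we have $E|\eta_p|^3\le C E|\tau_p-\tau_{p-1}|^{3/2}=O(n^{-3/2})$ using \eqref{A4}; since $|H(p,p,p)|\le Ck_n$ and there are $O_p(n)$ indices, the diagonal contribution is $O_p(k_n\cdot n\cdot n^{-3/2})=O_p(k_n/\sqrt n)=O_p(1)$, and the prefactor $\sqrt n/k_n^{3/2}\sim n^{-1/4}$ reduces it to $o_p(1)$. For $D_2$ and $D_3$, let $p_*$ denote the strict maximum of $\{p_1,p_2,p_3\}$ (in $D_2$, the one index appearing once). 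Then by optional stopping for the continuous martingale $X$,
\[
E\bigl[\eta_{p_*}\,\big|\,\mathcal F_{\tau_{p_*-1}}\bigr]=0,
\]
so these terms form $\mathcal F^{(0)}$-conditional martingale-difference arrays with respect to the index $p_*$. The $L^2$-norm of each piece is then bounded by the sum of conditional variances; a routine computation using $E[\eta_p^2\mid\mathcal F_{\tau_{p-1}}]=O_p(n^{-1})$ together with the support and size bounds on $H$ yields a variance of order $O_p(k_n^3\cdot n^{-1})$ for $D_3$ and $O_p(k_n^2)$ for $D_2$. Both give $o_p(n^{1/2})$ in $L^2$-norm for the unnormalized inner sum, and multiplying by $\sqrt n/k_n^{3/2}=O(n^{-1/4})$ yields $o_p(1)$.

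The main obstacle is the careful bookkeeping in step $D_3$: one must organize the triple sum so that the innermost variable is the "last" stopping time, so that the martingale-difference property can be applied, and then control the remaining double sum after the conditional variance collapse. The smoothness of $g$ in \ref{hypo:W} and the regularity afforded by \ref{hypo:A4iii}--\ref{hypo:A4v} are what allow the discrete sums involving $H$ to be compared with Riemann-integral approximations; this is essentially the same machinery already developed for the proof of Theorem \ref{mainthm}, and I would invoke it rather than rederive it.
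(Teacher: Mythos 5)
Your split into $D_1,D_2,D_3$ and the martingale observation for $D_3$ are sound, but the argument for $D_2$ breaks down exactly in the subcase that carries the substance of the proposition. Consider $p_1=p_2=p>q=p_3$. Taking $p_*=q$ (``the index appearing once'') does not help: $\eta_p^2$ is $\mathcal F_{\tau_p}$-measurable with $\tau_p>\tau_{q-1}$, so $H(p,p,q)\eta_p^2\eta_q$ is not a martingale difference in $q$. Conditioning instead on $\mathcal F_{\tau_{p-1}}$ leaves a nonzero drift-like factor $E[\eta_p^2\,|\,\mathcal F_{\tau_{p-1}}]\approx\int_{I_p}\sigma_s^2\,\mathrm ds$, and after extracting that factor the troublesome remainder is
\[
\sum_{p}E[\eta_p^2\,|\,\mathcal F_{\tau_{p-1}}]\sum_{q<p}H(p,p,q)\,\eta_q,
\]
a sum of past martingale increments weighted against quadratic-variation increments. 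This is, up to normalization and combinatorial constants, the quantity $\mathbb M^{(1,1)}_{g,g^2}(M,[M,M])^n_t$ that appears in the paper's proof as $\mathbf{II}_t$ after applying It\^o's formula to $(\widehat M_{i,t})^3$. A crude first-moment bound on it gives order $k_n^2 n^{-1/2}\asymp\sqrt n$, which after the $n^{-1/4}$ normalization still diverges; proving it is in fact $o_p(n^{1/4})$ is precisely where Lemma \ref{HYlem13} and hence the substance of \ref{hypo:A4} must enter --- one uses \ref{hypo:A4ii}--\ref{hypo:A4iii} to replace $E[\eta_p^2\,|\,\mathcal F_{\tau_{p-1}}]$ by $n^{-1}\sigma^2G$ evaluated at $T_{p-1}$, and then the $\varpi$-continuity of $G$ from \ref{hypo:A4v} to shift the evaluation time back to $T_{(p-d_n-1)_+}$, at which point the outer $q$-sum becomes a genuine martingale array. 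Your sketch mentions \ref{hypo:A4iii}--\ref{hypo:A4v} only in passing (``Riemann-integral approximations''), not at this step, and as written would appear to show the pre-averaged tricity is negligible for \emph{arbitrary} observation times, contradicting the pure-diffusion bias results of \citet{Fu2010b} and \citet{LMRZZ2014}.

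A secondary problem: since $\sqrt n/k_n^{3/2}\asymp n^{-1/4}$, you need the unnormalized inner sum to be $o_p(n^{1/4})$, not $o_p(n^{1/2})$. Even accepting your claimed variance orders, $O_p(k_n^2)=O_p(n)$ for $D_2$ gives an $L^2$-norm of order $n^{1/2}$ and hence a contribution of order $n^{1/4}$, which does not vanish. The paper's route avoids the triple-sum combinatorics altogether: It\^o's formula isolates the true martingale part $\mathbf I_t$ (handled by the Lenglart inequality) from the $[M,M]$ cross-term $\mathbf{II}_t$ (handled by Lemma \ref{HYlem13}), which is exactly the decomposition your problematic $D_2$ subcase is forced into anyway.
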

This result gives some intuition of why the time endogeneity is less important in a noisy diffusion setting. Indeed, it can be shown that the pre-averaged data is asymptotically centered Gaussian in some sense; see Lemma 6.7 of \cite{Koike2014jclt}.

\subsection{Connection with the long-run variation of time}

As was stated in Remark \ref{rmk:mainthm}(iv), \citet{BM2014} have introduced the functional 
\[
\mathfrak{S}_{n,m}(t)=\frac{n}{m}\sum_{p=1}^{N^n_t}(T_p-T_{p-1})\sum_{q=1}^{m\wedge p}(T_{p-q+1}-T_{p-q}),\qquad
t\in\mathbb{R}_+,~m=1,2,\dots
\]
to derive a central limit theorem for the generalized multi-scale estimator. Our assumption on observation times characterizes the limiting process of this functional as follows:
\begin{prop}\label{LRVT}
Under \ref{hypo:A4}, suppose further that $\#(\mathcal{N}^n\cap\{p:T_p\leq t\})=O_p(1)$ as $n\to\infty$ for every $t>0$. Then $\mathfrak{S}_{n,m}(t)\to\int_0^tG_s\mathrm{d}s$ as $n\to\infty$ for every $t$, provided that $m\to\infty$ and $m=o(n)$.
\end{prop}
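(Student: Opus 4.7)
The plan is to exploit the telescoping identity
\[
\sum_{q=1}^{m\wedge p}(T_{p-q+1}-T_{p-q}) \;=\; T_p - T_{(p-m)\vee 0},
\]
which rewrites
\[
\mathfrak{S}_{n,m}(t) \;=\; \frac{n}{m}\sum_{p=1}^{N^n_t}(T_p-T_{p-1})\bigl(T_p - T_{(p-m)\vee 0}\bigr).
\]
By a standard localization procedure based on \ref{hypo:A4v} I may assume that $G$ is bounded above by some constant $C$. The boundary contribution coming from $p<m$ is at most $(n/m)T_{m-1}^2$; since \ref{hypo:A4ii}--\ref{hypo:A4iii} give $E[n(T_p-T_{p-1})\mid\mathcal{F}^{(0)}_{T_{p-1}}]\le C$ outside the exceptional set $\mathcal{N}^n$ (which, by hypothesis, has cardinality $O_p(1)$ in $\{p:T_p\le t\}$, each exceptional index contributing only a duration $\le r_n(t)\to^p 0$), iterated conditioning yields $E[T_{m-1}] = O(m/n)$, hence the boundary piece is $O_p(m/n)=o_p(1)$.

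For the bulk $p\ge m$, I would introduce, for $p-1\notin\mathcal{N}^n$, the martingale difference (with respect to the discrete filtration $(\mathcal{F}^{(0)}_{T_{p-1}})$)
\[
\xi_p \;:=\; n(T_p-T_{p-1}) - G^n_{T_{p-1}},
\]
and decouple the predictable and noise parts:
\[
\frac{n\,(T_p-T_{p-m})}{m} \;=\; \frac{1}{m}\sum_{q=1}^m G^n_{T_{p-q}} \;+\; \frac{1}{m}\sum_{q=1}^m \xi_{p-q+1}.
\]
After multiplication by $(T_p-T_{p-1})$ and summation over $p$, the martingale part is controlled by orthogonality of martingale increments together with the local boundedness of conditional second moments of $\xi_p$ (following from \ref{hypo:A4iii} and the implicit boundedness obtained in the localization), giving a conditional $L^2$ bound of order $(t/m)\cdot\max_p(T_p-T_{p-1}) = o_p(1)$ since $m\to\infty$. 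The $O_p(1)$ indices in $\mathcal{N}^n$ contribute individual terms of order $r_n(t)=o_p(1)$ and are thus harmless.

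It then remains to replace $m^{-1}\sum_{q=1}^m G^n_{T_{p-q}}$ by $G_{T_p}$. Using \ref{hypo:A4iii} to substitute $G$ for $G^n$ at rate $n^{-\varpi}$ and the fact that $T_p-T_{p-m}\to 0$ in probability in an averaged sense (conditionally $E[T_p-T_{p-m}\mid\mathcal{F}^{(0)}_{T_{p-m}}]\le Cm/n\to 0$), the càdlàg regularity of $G$ gives
\[
\mathfrak{S}_{n,m}(t) \;=\; \sum_{p=m}^{N^n_t}(T_p-T_{p-1})\,G_{T_p} \;+\; o_p(1),
\]
and the right-hand side is a Riemann sum which, since $r_n(t)\to^p 0$ and $G$ is càdlàg of bounded variation on $[0,t]$ after localization, converges in probability to $\int_0^t G_s\,\mathrm{d}s$.

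The main obstacle I anticipate is the final averaged approximation: quantifying
\[
\sum_{p=m}^{N^n_t}(T_p-T_{p-1})\,\Bigl|\,m^{-1}\!\sum_{q=1}^m G_{T_{p-q}} - G_{T_p}\Bigr| \;=\; o_p(1)
\]
requires simultaneously using that $T_p-T_{p-m}$ is small only on average (not uniformly in $p$) and that the oscillation of $G$ over short intervals is small in a Stieltjes sense. The natural route is to split $G$ into a uniformly continuous part (handled by averaged smallness of the lags $T_p - T_{p-m}$, together with a Cesàro-type argument in $q$) and a pure jump part with finitely many jumps on $[0,t]$; the jumps affect only $O_p(1)$ blocks of $m$ consecutive indices $p$, whose total weight $\sum (T_p-T_{p-1})$ is $O_p(m\,r_n(t))=o_p(1)$ and hence negligible. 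This splitting, combined with the bounded-conditional-mean control from \ref{hypo:A4v}, closes the argument.
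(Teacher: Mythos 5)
Your argument follows the same route as the paper's two-line proof: compensate each inner duration $n(T_{p-q+1}-T_{p-q})$ by its conditional expectation $G^n_{T_{p-q}}$ (your $\xi_{p-q+1}$-decomposition is exactly this compensation), argue the martingale remainder is negligible (the paper invokes the Davis and Lenglart inequalities; you use orthogonality of martingale increments plus a Lenglart-type bound, which amounts to the same tool), and identify the compensated quantity as a Riemann sum for $\int_0^t G_s\,\mathrm{d}s$ via \ref{hypo:A4}. The additional care you devote to the boundary $p<m$, the $O_p(1)$ exceptional indices, and the lagged replacement $m^{-1}\sum_{q=1}^m G^n_{T_{p-q}}\approx G_{T_p}$ using \ref{hypo:A4iii} and \ref{hypo:A4v} is a refinement, not a departure: it spells out precisely what the paper compresses into ``due to \ref{hypo:A4}''.
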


\section{Optimal weight function and related topics}\label{section:optimal}

\subsection{Optimal weight function}\label{optimal}

We turn to question (b). Noting that $\phi''_{g,g}=-\phi_{g',g'}$, in the univariate and equidistant sampling case our estimator has the same asymptotic variance as that of the flat-top realized kernel with the kernel function $\phi_{g,g}$ and the bandwidth $k_n$. Here, the flat-top realized kernel with the kernel function $K$ and the bandwidth $H$ is defined by
\begin{equation*}
RK(Y)=\gamma_0(Y)+\sum_{h=1}^{N^n_T-1}K\left(\frac{h-1}{H}\right)\left\{\gamma_h(Y)+\gamma_{-h}(Y)\right\},\qquad\gamma_h(Y)=\sum_{j=h+1}^{N^n_T}\Delta_{j}Y\Delta_{j-h}Y.
\end{equation*}
According to Proposition 1 of \cite{BNHLS2008}, in the parametric setting, i.e.~both $\sigma$ and $\Upsilon$ are constant, the asymptotic variance of $RK(Y)$ is minimized by the kernel $K_{\mathrm{opt}}(x)=(1+x)e^{-x}$ with the oracle bandwidth $H=(\sqrt{\Upsilon}/\sigma)\sqrt{N^n_T}$. Therefore, if there exists a function $g$ on $\mathbb{R}$ satisfying [W] and $\phi_{g,g}=K_{\mathrm{opt}}$, such a function $g$ is an optimal weight function. Fortunately, we can find such a $g$ by a simple Fourier analysis and it is given by $g(x)=e^{-|x|}$. In other words, the (twice) double exponential density function is an optimal weight function for our estimator. In this case our estimator achieves the parametric efficiency bound $8\sigma^3\sqrt{\Upsilon}$ of the asymptotic variance from \cite{GJ2001a} with the oracle tuning parameter $k_n=(\sqrt{\Upsilon}/\sigma)\sqrt{N^n_T}$.
%Now assume that such a $g$ exists. Then, by the convolution theorem we have $\mathcal{F}[k_{\mathrm{opt}}]=|\mathcal{F}[g]|^2$, where for a function $f$ on $\mathbb{R}$ $\mathcal{F}[f]$ denotes its Fourier transform i.e., $\mathcal{F}[f](z)=\int_{-\infty}^\infty f(x)e^{-2\pi\mathrm{i}xz}\mathrm{d}x$. Note that we extend $k_{\mathrm{opt}}$ to the whole real line by setting $k_{\mathrm{opt}}(x)=k_{\mathrm{opt}}(-x)$ for $x<0$. Then, since it holds that $\mathcal{F}[k_{\mathrm{opt}}](z)=4/\{1+(2\pi z)^2\}^2$, 

Despite its efficiency, the optimal kernel $K_{\mathrm{opt}}$ is not preferable in practice due to its computational disadvantage. That is, since the support of $K_{\mathrm{opt}}$ is unbounded, it requires $n$ (all) realized autocovariances $\gamma_h(Y)$ to be computed. As a consequence, the order of the computation for $RK(Y)$ becomes $O(n^2)$. In contrast, our optimal weight function has a nice feature in terms of the computation. Let us define the sequences $(y^+_p)_{p=k_n}^{N^n_T-k_n+1}$ and $(y^-_p)_{p=k_n}^{N^n_T-k_n+1}$ recursively by $y^+_{k_n}=\widetilde{\Delta}_{k_n}Y$, $y^-_{k_n}=\widetilde{\Delta}_{N^n_T-k_n+1}Y$ and  
\begin{equation*}
y^+_p=e^{-1/k_n}y^+_{p-1}+\widetilde{\Delta}_{p}Y,\qquad
y^-_p=e^{-1/k_n}y^-_{p-1}+\widetilde{\Delta}_{N^n_T-p+1}Y,\qquad
p=k_n+1,\dots,N^n_T-k_n+1.
\end{equation*}
Then it can easily be seen that $\widetilde{Y}_{i,T}=y^+_i+y^-_{N^n_T-i+1}-\widetilde{\Delta}_{i}Y$, hence we can compute $(\widetilde{Y}_{i,T})_{i=k_n}^{N^n_T-k_n+1}$ with the order $O(n)$. Consequently, the order of the computation of our estimator is $O(n)$, which is, in general, even less than that of the MRC estimator with a weight function with a bounded support.

\subsection{Comparison with other approaches}

We shall compare the pre-averaging approach with two existing nonparametric volatility estimation methods which also achieve the parametric efficiency bound: the QML approach from \citet{Xiu2010} and the spectral method from \citet{Reiss2011}. In terms of implementation, the QML approach has two advantages over the others. Namely, it contains no tuning parameter and it always ensures the positivity of the estimated value. On the other hand, the spectral approach has an advantage that it is also non-parametrically asymptotically efficient in the sense that it achieves an asymptotic lower bound for estimating integrated volatilities in settings with non-constant volatilities (see \cite{Reiss2011} for details). Another advantage of the spectral approach is that it can be extended to an efficient estimator for multivariate volatility matrices in a non-synchronous observation setting (the \textit{local method of moment (LMM) estimator} from \citet{BHMR2014}). Selection of the tuning parameter $\theta$ in our estimator also has a theoretical issue. Namely, the optimal $\theta$ contains unknown parameters and it is not clear whether we may plug-in some estimated values into them. This issue can presumably be solved by modifying the estimator to an \textit{adaptive} version, which has already been done in the case that $g$ has a bounded support; see Section 7.6.2 of \citet{AJ2014} for details.

An advantage of the pre-averaging approach over these two approaches is that it enables us systematically to extend functionals of semimartingale increments in a noisy observation setting. 
It is known that such functionals serve as statistical analyses of jumps very much (cf.~Chapter 10 of \citet{AJ2014}), so the pre-averaging approach is expected to be more appropriate than the others in terms of handling jumps, and this is indeed one of the original motivations to introduce the concept of pre-averaging by \citet{PV2009}. 
%It is known that such functionals serve as various statistical tests for semimartingales, and
In fact, it is not obvious how to handle jumps separately from diffusion parts in the QML approach. For the spectral method, a threshold method originally proposed by \citet{M2001} can be applied to separating jumps from the spectral volatility estimator, as shown by \citet{BW2015}. However, \cite{BW2015} have also shown that the spectral estimator from \cite{Reiss2011} is not a rate-optimal estimator for the \textit{entire} quadratic variation. As we will briefly see in the next subsection, the pre-averaging approach can handle the effect of jumps in volatility inferences more efficiently.

\subsection{Jumps}\label{sec:jumps}

We shall briefly discuss how much the pre-averaging procedure can improve the estimation of the quadratic variation in the presence of jumps. Specifically, we assume that our observations are generated by the process $Z_t=Y_t+J_t$ instead of $Y_t$, where $J$ is a c\`adl\`ag process defined on $\mathcal{B}^{(0)}$ and of the form $J_t=\sum_{k=1}^{L_t}\Delta J_{S_k}$ with $L_t$ being a point process with the jump times $S_1<S_2<\cdots$. Moreover, for the sake of brevity, we concentrate our attention on the following simplified situation: $d=d'=1$, $T=1$, $t^1_i=i/n$, $a_s\equiv0$, $\sigma$ and $\Upsilon$ are constants, $\epsilon\overset{i.i.d.}{\sim}\mathbf{N}(0,\Upsilon)$. 

To indicate the dependence of quantities on the weight function $g$ explicitly, in the following we will write $\widetilde{Z}(g)_{i,1}$ instead of $\widetilde{Z}_{i,1}$, for example. We introduce threshold pre-averaging estimators for the (squared) volatility $\sigma^2$ and the sum $\sum_{k=1}^{L_1}(\Delta J_{S_k})^2$ of the squared jumps as follows:
\begin{align*}
\left\{\begin{array}{l}
\widehat{IV}_n(g,\rho_n)=\frac{1}{\psi(g)_2 k_n}\sum_{i=k_n}^{n-k_n+1}\left(\widetilde{Z}(g)_{i,1}\right)^21_{\left\{\left|\widetilde{Z}(g)_{i,1}\right|\leq\rho_n\right\}}-\frac{\psi(g)_1}{\psi(g)_2k_n^2}[Z,Z]^n_1,\\
\widehat{JV}_n(g,\rho_n)=\frac{1}{\psi(g)_2 k_n}\sum_{i=k_n}^{n-k_n+1}\left(\widetilde{Z}(g)_{i,1}\right)^21_{\left\{\left|\widetilde{Z}(g)_{i,1}\right|>\rho_n\right\}},
\end{array}\right.
\end{align*}
where $\rho_n$ is a sequence of positive numbers tending to 0 as $n\to\infty$. Then we obtain the following result:
\begin{prop}\label{jclt}
In addition to the above assumptions, suppose that $g_1,g_2$ satisfy \ref{hypo:W}, $\rho_{n}=cn^{-w}$ for some $c>0$ and $w\in(\frac{1}{8},\frac{1}{4})$, $P(S_1=0)=P(S_{L_1}=1)=0$ and $(S_k)_{k\geq1}$ is independent of $W$. Then
$n^{1/4}(\widehat{IV}_n(g_1,\rho_{n})-\sigma^2,\widehat{JV}_n(g_2,\rho_{n})-\sum_{k=1}^{L_1}(\Delta J_{S_k})^2)\to^{d_s}(v_C(g_1,\theta)\zeta_C,v_J(g_2.\theta)\zeta_J)$ as $n\to\infty$, where $\zeta_C$ and $\zeta_J$ are mutually independent standard normal variables which are defined on an extension of $\mathcal{B}^{(0)}$ and independent of $\mathcal{F}^{(0)}$, and
\[
\left\{\begin{array}{l}
v_C(g,\theta)^2=\frac{4}{\psi(g)_2^2}\left(\Phi(g)_{22}\theta\sigma^4+2\frac{\Phi(g)_{12}}{\theta}\sigma^2\Upsilon+\frac{\Phi(g)_{11}}{\theta^3}\Upsilon^2\right),\\
v_J(g,\theta)^2=\frac{8}{\psi(g)_2^2}\left(\Phi(g)_{22}\theta\sigma^2+\frac{\Phi(g)_{12}}{\theta}\Upsilon\right)\sum_{k=1}^{L_1}(\Delta J_{S_k})^2.
\end{array}\right.
\]
\end{prop}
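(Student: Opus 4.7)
The plan is to exploit linearity of the pre-averaging operator and write
$\widetilde{Z}(g)_{i,1}=\widetilde{Y}(g)_{i,1}+\widetilde{J}(g)_{i,1}$ for $g\in\{g_1,g_2\}$. Denote by $p_k$ the (a.s.\ unique) index with $S_k\in(\tau^1_{p_k-1},\tau^1_{p_k}]$; by the assumption $P(S_1=0)=P(S_{L_1}=1)=0$ and $L_1<\infty$ a.s., for $n$ large the jumps fall into distinct intervals interior to $(k_n,n-k_n+1)$, so that
$\widetilde{J}(g)_{i,1}=\sum_k g((p_k-i)/k_n)\,\Delta J_{S_k}$ up to negligible boundary corrections. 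Fix $M_n=n^\alpha$ with $\alpha>0$ small and partition indices into the near set $\mathcal{I}_{\mathrm{near}}=\{i:\exists k,\,|i-p_k|\le M_n k_n\}$ and its complement $\mathcal{I}_{\mathrm{far}}$. Taking $r$ large in [W](ii) gives $|\widetilde{J}(g)_{i,1}|\le CL_1 M_n^{-2r}$ on $\mathcal{I}_{\mathrm{far}}$, while standard pre-averaging moment bounds (as used in the proof of Theorem \ref{mainthm}) yield $E[|\widetilde{Y}(g)_{i,1}|^q]\le C_q n^{-q/4}$.

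This produces a threshold dichotomy: on $\mathcal{I}_{\mathrm{far}}$ both parts are $o_p(\rho_n)$ since $w<1/4$, so the indicator in $\widehat{IV}_n$ (resp.\ $\widehat{JV}_n$) equals $1$ (resp.\ $0$) with probability tending to $1$ uniformly in $i$; for $i$ within a fixed multiple of $k_n$ of some $p_k$ the jump dominates and $|\widetilde{Z}(g)_{i,1}|>\rho_n$ because $\rho_n\to0$. For $\widehat{IV}_n$ I write $\widehat{IV}_n(g_1,\rho_n)-\widetilde{\MRC}[Y]^n_1$ as the sum of (i) the bias-correction gap $(\psi(g_1)_1/\psi(g_1)_2 k_n^2)([Z,Z]^n_1-[Y,Y]^n_1)=O_p(n^{-1})$ since $[Z,Z]^n-[Y,Y]^n=O_p(1)$; (ii) the far-set cross-contribution $\frac{1}{\psi(g_1)_2 k_n}\sum_{\mathcal{I}_{\mathrm{far}}}((\widetilde{J})^2+2\widetilde{Y}\widetilde{J})=O_p(n^{1/2}M_n^{-4r}+n^{1/4}M_n^{-2r})$; and (iii) the near-set residual $\frac{1}{\psi(g_1)_2 k_n}\sum_{\mathcal{I}_{\mathrm{near}}}(\widetilde{Y})^2=O_p(M_n n^{-1/2})$. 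Each is $o_p(n^{-1/4})$ for suitable $\alpha$ and large $r$, so Theorem \ref{mainthm} applied with $G\equiv1$, $\chi\equiv1$, $\Sigma\equiv\sigma^2$, $\widetilde{\Upsilon}\equiv\Upsilon$ delivers the first marginal CLT with variance $v_C(g_1,\theta)^2$.

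For $\widehat{JV}_n$, on $\mathcal{I}_{\mathrm{near}}$ expand $(\widetilde{Z})^2=(\widetilde{J})^2+2\widetilde{Y}\widetilde{J}+(\widetilde{Y})^2$. The $(\widetilde{Y})^2$ part is again $O_p(M_n n^{-1/2})$. The $(\widetilde{J})^2$ part converges to $\sum_k(\Delta J_{S_k})^2$ at rate $O_p(k_n^{-1})$ via the Riemann-sum identity $\frac{1}{\psi(g_2)_2 k_n}\sum_i g_2^2((p_k-i)/k_n)\to 1$ combined with the vanishing of off-diagonal $k\neq k'$ terms (well-separation of jumps). The cross term
$C_n:=\frac{2}{\psi(g_2)_2 k_n}\sum_k\Delta J_{S_k}\,V_k$, where $V_k:=\sum_i g_2((p_k-i)/k_n)\,\widetilde{Y}(g_2)_{i,1}$, carries the asymptotic Gaussian fluctuation. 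A direct second-moment computation, using orthogonality of the Brownian increments $\Delta_p X$ and the local dependence structure of noise returns, gives
$\mathrm{Var}(V_k\mid\mathcal{F}^{(0)})\sim 2k_n^3 n^{-1}\Phi(g_2)_{22}\sigma^2+2k_n\Phi(g_2)_{12}\Upsilon$, whence
$n^{1/2}\mathrm{Var}(C_n\mid\mathcal{F}^{(0)})\to\frac{8}{\psi(g_2)_2^2}(\Phi(g_2)_{22}\theta\sigma^2+\Phi(g_2)_{12}\Upsilon/\theta)\sum_k(\Delta J_{S_k})^2=v_J(g_2,\theta)^2$.

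The main obstacle is upgrading the two marginal analyses to a joint stable CLT with $\zeta_C\indepe\zeta_J$. I would express both statistics as terminal values of discrete-time $\mathbf{F}$-martingales and apply a bivariate stable martingale CLT (in the spirit of Theorem 2.2.15 of \cite{JP2012}), exactly as in the proof of Theorem \ref{mainthm}. Because $(S_k)$ is independent of $W$ and of the noise, one may include $\sigma(S_k)$ in the conditioning $\sigma$-field so that the martingale increments of $\widetilde{\MRC}[Y]^n_1-\sigma^2$ are \emph{quadratic} in the Gaussian increments of $W$ and $\epsilon$, while those of $C_n$ are \emph{linear} in the same Gaussian inputs with $(\Delta J_{S_k})$ as outer weights. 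The predictable quadratic covariation between the two components therefore involves only odd Gaussian moments and converges to zero, yielding the asserted independence. The Lindeberg condition and the predictable-bracket limits for each component are handled by the moment-truncation machinery of Theorem \ref{mainthm}, extended to the jump-weighted functionals through the a.s.\ finiteness of $L_1$ and the (realization-wise) boundedness of the jump sizes.
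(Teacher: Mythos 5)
Your overall architecture tracks the paper's: reduce the thresholded statistics to idealized sums up to $o_p(n^{-1/4})$ by splitting indices into ``near a jump'' and ``far from a jump'', then invoke Theorem~\ref{mainthm} for the $\widehat{IV}_n$ marginal, expand the cross term $2\mathbb{H}(Y,J;g_2)^n$ for the $\widehat{JV}_n$ marginal, and finish with a bivariate stable martingale CLT in which the linear-vs-quadratic structure in the Gaussian inputs kills the cross-bracket. The paper's proof does exactly this, with the final step delegated to Propositions~6.6--6.7 and Lemma~6.7 of \cite{Koike2014jclt}; your sketch of that last step is morally right but does not engage with the conditioning and Lindeberg verification that paper actually carries out, so treat it as a pointer rather than a proof.

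The concrete gap is in item~(iii) of your $\widehat{IV}_n$ decomposition. You assert that on $\mathcal{I}_{\mathrm{near}}$ the indicator $1_{\{|\widetilde{Z}(g_1)_{i,1}|\le\rho_n\}}$ equals $0$, so that the only near-set residual is the dropped $(\widetilde{Y})^2$-mass, of order $O_p(M_n n^{-1/2})$. But $\mathcal{I}_{\mathrm{near}}$ has width $M_n k_n$ with $M_n=n^\alpha\to\infty$, and \ref{hypo:W}(ii) only gives polynomial decay of $g_1$; for $|i-p_k|$ of order $M_n k_n$ one has $|\widetilde{J}(g_1)_{i,1}|\lesssim M_n^{-2r}\ll\rho_n$, so the indicator is typically $1$ there. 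Hence your decomposition omits the contribution $\frac{1}{\psi_2 k_n}\sum_{i\in\mathcal{I}_{\mathrm{near}}}\bigl[(\widetilde{J})^2+2\widetilde{Y}\widetilde{J}\bigr]1_{\{|\widetilde{Z}|\le\rho_n\}}$. On the high-probability event $\{|\widetilde{Y}_{i,1}|\le\rho_n/2\}$ this piece is bounded by $O(\rho_n^2)$ per index, giving $O_p(M_n\rho_n^2)=O_p(n^{\alpha-2w})$ in total — which is \emph{larger} than your claimed $O_p(M_n n^{-1/2})$ since $w<1/4$, and is $o_p(n^{-1/4})$ only if $\alpha<2w-\tfrac14$, a strictly more restrictive condition than the $\alpha<\tfrac14$ forced by your (iii). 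This is precisely the constraint the paper encodes through its choice $\eta\in(0,2w-\tfrac14)$ for the window $\mathcal{I}_n=\{i:S_k\in(\frac{i-n^{1/2+\eta}}{n},\frac{i+n^{1/2+\eta}}{n})\ \text{for some }k\}$, and the paper's cleaner device is to bound \emph{every} near-set summand uniformly by $O(\rho_n^2)$ (regardless of whether the indicator is on or off) after first passing to $\{|\widetilde{Y}|\le\rho_n/2\}$ via Lemma~\ref{lemma:threshold}, rather than trying to determine the indicator's value. To repair your argument, replace (iii) by the uniform $\rho_n^2$-bound on all near-set indices, impose $\alpha<2w-\tfrac14$, and then choose $r$ large enough in \ref{hypo:W}(ii) for (ii) to vanish.
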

Note that, for the case that $g$ has a bounded support, central limit theorems for the MRC estimator have been derived in fairly general settings by (\citealt{JPV2010,Koike2014jclt}), and the derivation of Proposition \ref{jclt} is pursued completely analogous to these papers.

From Proposition \ref{jclt} our adjusted MRC estimator is also a rate-optimal estimator for the entire quadratic variation. However, in terms of efficiency it is better to use different weight functions between the estimation of the continuous and the jump parts. This is because the optimal choices of $\theta$ for minimizing $v_C(g,\theta)$ and $v_J(g,\theta)$ do not coincide for any $g$ satisfying \ref{hypo:W}. Namely, the estimator $\widehat{QV}_n(g_1,g_2,\rho_{n}):=\widehat{IV}_n(g_1,\rho_{n})+\widehat{JV}_n(g_2,\rho_{n})$ could be a more efficient estimator for the quadratic variation than usual MRC estimators in the presence of jumps. For example, if we set $g_1(x)=e^{-|x|}$, then $g_2(x)=e^{-\sqrt{5}|x|}$ makes the optimal choices of $\theta$ for minimizing $v_C(g_1,\theta)$ and $v_J(g_2,\theta)$ coincide. In this case the minimum value of $v_J(g_2,\theta)$ becomes $4\sqrt{5}\sigma\sqrt{\Upsilon}$.

\begin{rmk}[Comparison with Bibinger-Winkelmann's spectral jump estimator]
\citet{BW2015} have overcome the aforementioned problem of estimating jumps in the spectral approach by a clever adjustment which exploits a trigonometric identity. Their adjusted estimator for $\sum_{k=1}^{L_1}(\Delta J_{S_k})^2$, which is given by Eq.(16) of \cite{BW2015}, enjoys a central limit theorem with the optimal rate $n^{-1/4}$. In the current situation the asymptotic variance of this estimator is given by
\[
\Xi=2\left(\int_0^\infty\frac{\mathrm{d}z}{\left(\sigma^2+\pi^2z^2\Upsilon\right)^2}\right)^{-2}\int_0^\infty\frac{\sigma^2+4\pi^2z^2\Upsilon}{\left(\sigma^2+\pi^2z^2\Upsilon\right)^4}\mathrm{d}z\sum_{k=1}^{L_1}(\Delta J_{S_k})^2
\]
according to Theorem 2 of \cite{BW2015}. The integrals in the expression can be calculated using the formula 
\[
\int_0^\infty\frac{\mathrm{d}z}{(\sigma^2+\pi^2z^2\Upsilon)^{n+1}}=\frac{(2n-1)!!}{2\sigma^{2n+1}\sqrt{\Upsilon}(2n)!!},\qquad
n=1,2,\dots,
\]
and we obtain $\Xi=9\sigma\sqrt{\Upsilon}\sum_{k=1}^{L_1}(\Delta J_{S_k})^2$ which is slightly greater than $4\sqrt{5}\sigma\sqrt{\Upsilon}\sum_{k=1}^{L_1}(\Delta J_{S_k})^2$. So $\widehat{JV}_n(g,\rho_n)$ could be more efficient in the ideal situation where we can choose the optimal $\theta$. 
\end{rmk}

To evaluate the absolute efficiency of estimating $\sigma^2+\sum_{k=1}^{L_1}(\Delta J_{S_k})^2$, we need to derive a reasonable asymptotic lower bound for estimating this quantity. For this purpose we further simplified our model as follows:
\begin{equation}\label{jump_model}
\underline{Z}_i=\sigma W_{i/n}+\sum_{k=1}^K\gamma_k1_{\{S_k\leq i/n\}}+\epsilon_i,\qquad
i=1,\dots,n,
\end{equation}
where we assume that $\Upsilon>0$, $K\in\mathbb{N}$ and $0<S_1<\cdots<S_K<1$ are known and deterministic, and consider the problem of estimating the (deterministic) parameter $\vartheta=(\sigma,\gamma_1,\dots,\gamma_K)\in(0,\infty)\times\mathbb{R}^K$ from observations generated by \eqref{jump_model}. Note that simplification of making the number of jumps and jump times deterministic is commonly used for establishing asymptotic lower bounds for estimating jumps in the absence of noise (cf.~\citet{CDG2014} and Section 4 of \citet{LTT2014}). 
\begin{prop}\label{LAN}
For model \eqref{jump_model}, we have the LAN property at any $\vartheta$ with rate $n^{-1/4}$ and asymptotic Fisher information matrix $(2\sigma\sqrt{\Upsilon})^{-1}E_{K+1}$, where $E_{K+1}$ is the identity matrix of order $K+1$.
\end{prop}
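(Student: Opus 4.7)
My approach is to treat \eqref{jump_model} as a finite-dimensional Gaussian experiment. With $e_k\in\mathbb{R}^n$ given by $(e_k)_i=\mathbf{1}_{\{S_k\leq i/n\}}$ and $K_n=(\min(i,j)/n)_{i,j=1}^n$, the observation vector $\underline{Z}$ is $N(\sum_{k=1}^K\gamma_k e_k,\Sigma_\sigma)$ with $\Sigma_\sigma=\sigma^2 K_n+\Upsilon I_n$. Because the mean depends only on $\gamma$ while the covariance depends only on $\sigma$, the Gaussian Fisher information is automatically block-diagonal, with entries
\[
(\mathcal{I}^\gamma_n)_{kl}=e_k^*\Sigma_\sigma^{-1}e_l,\qquad \mathcal{I}^\sigma_n=2\sigma^2\operatorname{tr}\bigl((\Sigma_\sigma^{-1}K_n)^2\bigr),
\]
and the LAN expansion follows from a Taylor expansion of the Gaussian log-likelihood ratio. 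Writing $\sigma_n=\sigma+n^{-1/4}u_0$, $\mu_n-\mu=n^{-1/4}\sum_k u_k e_k$, and $\Sigma_{\sigma_n}^{-1}=\Sigma_\sigma^{-1}-\Sigma_\sigma^{-1}(\Sigma_{\sigma_n}-\Sigma_\sigma)\Sigma_\sigma^{-1}+\cdots$, the first-order stochastic terms assemble into a linear-plus-quadratic Gaussian score $\Delta_n$ with $\operatorname{Var}(\Delta_n)=\mathcal{I}_n$, while the second-order terms combine with the expansion of $\log\det\Sigma_{\sigma_n}$ to give exactly $-\tfrac12 u^*\mathcal{I}_n u$. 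Remainder terms are of order $n^{-1/4}\cdot n^{-1/2}\mathcal{I}_n=O(n^{-1/4})$ and hence negligible.

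The heart of the proof is to show $n^{-1/2}\mathcal{I}^\gamma_n\to(2\sigma\sqrt{\Upsilon})^{-1}E_K$ and $n^{-1/2}\mathcal{I}^\sigma_n\to(2\sigma\sqrt{\Upsilon})^{-1}$, which I would do through the spectral analysis of $K_n$. Its eigenvalues are $\kappa_j=\{4\sin^2((2j-1)\pi/(2(2n+1)))\}^{-1}/n\sim n/(\pi^2(j-\tfrac12)^2)$ for $j=O(\sqrt{n})$, with eigenvectors approximating $v_j(i)=\sqrt{2/n}\sin((j-\tfrac12)\pi i/n)$. Under the substitution $z_j=(j-\tfrac12)/\sqrt{n}$ each spectral sum becomes a Riemann sum, so
\[
\mathcal{I}^\sigma_n=2\sigma^2\sum_j\frac{\kappa_j^2}{(\sigma^2\kappa_j+\Upsilon)^2}\sim 2\sigma^2\sqrt{n}\int_0^\infty\frac{dz}{(\sigma^2+\pi^2z^2\Upsilon)^2}=\frac{\sqrt{n}}{2\sigma\sqrt{\Upsilon}},
\]
using the explicit integral formula already stated in the paper. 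A short Fourier calculation gives $\langle e_k,v_j\rangle\sim\sqrt{2n}\cos((j-\tfrac12)\pi S_k)/((j-\tfrac12)\pi)$, where the identity $\cos((j-\tfrac12)\pi)=0$ eliminates the right endpoint, so
\[
e_k^*\Sigma_\sigma^{-1}e_l\sim\sum_{j\geq1}\frac{2\cos((j-\tfrac12)\pi S_k)\cos((j-\tfrac12)\pi S_l)}{\sigma^2+\pi^2(j-\tfrac12)^2\Upsilon/n}.
\]
For $k=l$, averaging $\cos^2$ to $\tfrac12$ yields $\sqrt{n}\int_0^\infty dz/(\sigma^2+\pi^2z^2\Upsilon)=\sqrt{n}/(2\sigma\sqrt{\Upsilon})$. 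For $k\neq l$, the product-to-sum identity replaces the numerator by $\cos((j-\tfrac12)\pi(S_k-S_l))+\cos((j-\tfrac12)\pi(S_k+S_l))$; since $S_k\pm S_l$ is not an even integer, a Riemann--Lebesgue estimate against the smooth envelope $1/(\sigma^2+\pi^2z^2\Upsilon)$ reduces each sum to $o(\sqrt{n})$, delivering the asymptotic orthogonality.

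The principal obstacle is making these sinusoidal approximations quantitative: the true eigenvectors of $K_n$ are only approximately of the form $v_j$, and the step vector $e_k$ concentrates Fourier energy into high-frequency modes near its jump $\lceil nS_k\rceil$, precisely where the sinusoidal approximation is weakest. I would handle this by invoking the exact closed-form diagonalization of the tridiagonal matrix $K_n^{-1}$ (a discrete sine-type transform), then splitting the spectral sum into a low-frequency band $j\leq J_n$ with $\sqrt{n}\ll J_n\ll n$ where the Riemann-sum and sinusoidal-approximation errors are uniformly controllable, and a high-frequency tail bounded by $\Upsilon^{-1}\|P_{>J_n}e_k\|^2=o(\sqrt{n})$ via the crude bound $\Sigma_\sigma^{-1}\preceq\Upsilon^{-1}I_n$ together with the standard Fourier tail estimate for step functions. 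Once these spectral asymptotics are in hand, the rest is mechanical: $\Delta_n$ is an explicit linear-plus-quadratic functional of a Gaussian vector with $\operatorname{Var}(\Delta_n)=\mathcal{I}_n$, so $\mathcal{I}_n^{-1/2}\Delta_n\to N(0,E_{K+1})$ is immediate for the mean-parameter components, and the scale-parameter component follows from a Lindeberg CLT applied in the eigenbasis of $K_n$.
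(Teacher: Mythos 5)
Your outline is sound and the spectral asymptotics you compute are correct, but you have set yourself an unnecessarily hard computation. The paper works instead with the differenced vector $D_n\mathbf{z}_n$, whose covariance $V_n(\sigma)=D_nM_nD_n^*$ (with $M_n=\sigma^2K_n+\Upsilon I_n$ your $\Sigma_\sigma$) is tridiagonal and admits an \emph{exact} cosine diagonalization $U_nV_n(\sigma)U_n=\Lambda_n(\sigma)$ by Lemma~1 of Kunitomo--Sato. The crucial dividend of differencing is that your step vector $e_k$ collapses to a Kronecker delta, $D_ne_k=\mathbf{e}_{i(k)}$, so the Fisher-information entry $e_k^*\Sigma_\sigma^{-1}e_l=\mathbf{e}_{i(k)}^*V_n(\sigma)^{-1}\mathbf{e}_{i(l)}$ becomes a sum $\sum_j U_n^{i(k)j}U_n^{i(l)j}/\lambda_j^n(\sigma)$ of genuine cosines, each bounded by $O(n^{-1/2})$, and the Riemann--Lebesgue step (Lemma~\ref{lemma:RL}) applies cleanly. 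In your undifferenced formulation the Fourier coefficients of the step function $e_k$ decay only like $1/j$ and the eigenvectors of $K_n$ are only \emph{approximately} sinusoidal, which is exactly the obstacle you single out and propose to control by a band-splitting argument plus an $L^2$ tail bound. That argument can be made to work, but it is considerably heavier than what the paper needs, and is entirely self-inflicted: pre-differencing turns the step into a spike, makes the transform exact, and makes the eigenvector approximation error disappear. (The two representations compute literally the same number, since $D_n^{-1}$ cancels in $\mathbf{e}_{i(k)}^*(D_n^{-1})^*D_n^*V_n^{-1}D_nD_n^{-1}\mathbf{e}_{i(l)}$.) The paper also handles the $\sigma$-block by adapting Gloter--Jacod~(2001) for the quantities $\delta^n_i=\lambda^n_i(\sigma+n^{-1/4}h^{(0)}_n)/\lambda^n_i(\sigma)-1$ rather than by your trace/Riemann-sum argument; both routes give $(2\sigma\sqrt{\Upsilon})^{-1}$ and your version is correct, though the paper's avoids reproving the $\sigma$-part from scratch. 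Your observation that block-diagonality of the Fisher information is automatic because mean and covariance depend on disjoint parameters is a nice streamlining that the paper only uses implicitly (via centering of $Z_i'\{(Z_i')^2-1\}$).
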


Proposition \ref{LAN} implies that an asymptotic lower bound for estimating $\sigma^2+\sum_{k=1}^K\gamma_k^2$ is given by $8\sigma^3\sqrt{\Upsilon}+8\sigma\sqrt{\Upsilon}\sum_{k=1}^K\gamma_k^2$. In particular, the above choice of the weight function $g_1$ does not attain this bound. So the next question is whether there is a weight function $g$ satisfying \ref{hypo:W} and $v_J(g,\theta)=8\sigma\sqrt{\Upsilon}$ for some $\theta>0$. Unfortunately, however, we have the following negative result.
\begin{prop}\label{negative}
There is no function $g$ satisfying \ref{hypo:W} and $v_J(g,\theta)=8\sigma\sqrt{\Upsilon}$ for some $\theta>0$.
\end{prop}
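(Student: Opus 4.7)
Fix $g$ satisfying \ref{hypo:W} and write $f:=\phi_{g,g}$, so that $\psi(g)_2=f(0)$. The identity $\phi_{g',g'}=-f''$ noted at the start of Section \ref{optimal} gives
\[
\Phi(g)_{22}=\int_0^\infty f(y)^2\,\mathrm{d}y,\qquad \Phi(g)_{12}=-\int_0^\infty f(y)f''(y)\,\mathrm{d}y.
\]
Since $g$ is real-valued, $f$ is even, and since $g,g'$ decay faster than any polynomial by \ref{hypo:W}(ii), $f$ and $f'$ are $C^1$ functions on $\mathbb{R}$ vanishing at infinity, with $f'(0)=0$ by evenness. Integrating $\Phi(g)_{12}$ by parts therefore gives $\Phi(g)_{12}=\int_0^\infty f'(y)^2\,\mathrm{d}y$.

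Minimising the expression for $v_J(g,\theta)^2$ in Proposition \ref{jclt} over $\theta>0$ by AM--GM yields
\[
\inf_{\theta>0}v_J(g,\theta)^2
=\frac{16\sigma\sqrt{\Upsilon}\sqrt{\Phi(g)_{22}\,\Phi(g)_{12}}}{\psi(g)_2^{2}}\sum_{k=1}^{L_1}(\Delta J_{S_k})^2.
\]
Since the LAN lower bound from Proposition \ref{LAN} corresponds to $v_J(g,\theta)^2=8\sigma\sqrt{\Upsilon}\sum_{k=1}^{L_1}(\Delta J_{S_k})^2$, the claim reduces to the weight-function inequality $4\Phi(g)_{22}\Phi(g)_{12}>\psi(g)_2^{4}$ for every $g$ satisfying \ref{hypo:W}. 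The non-strict version follows by a one-line Cauchy--Schwarz argument: using $f\to 0$ at infinity, $f(0)^2=-2\int_0^\infty f(y)f'(y)\,\mathrm{d}y$, so
\[
\psi(g)_2^{4}=f(0)^{4}=4\Bigl(\int_0^\infty f(y)f'(y)\,\mathrm{d}y\Bigr)^{2}\le 4\Phi(g)_{22}\Phi(g)_{12}.
\]

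The main task is to rule out equality. Equality in Cauchy--Schwarz requires $f$ and $f'$ to be proportional on $(0,\infty)$, and combined with $f(0)=\psi(g)_2>0$ and $f\to 0$ at infinity this forces $f(y)=\psi(g)_2\, e^{-\alpha y}$ on $[0,\infty)$ for some $\alpha>0$, hence $f(y)=\psi(g)_2\, e^{-\alpha|y|}$ on $\mathbb{R}$ by evenness. But then $f'(0^{+})=-\psi(g)_2\alpha\neq\psi(g)_2\alpha=f'(0^{-})$, so $f$ fails to be $C^{1}$ at $0$, contradicting the $C^{1}$ regularity of $\phi_{g,g}$ which follows from $g,g'\in L^{2}(\mathbb{R})$ under \ref{hypo:W}. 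Therefore the inequality is strict, and the proposition follows.
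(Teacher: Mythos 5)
Your proposal is correct and follows essentially the same route as the paper: after reducing (via the AM--GM minimisation over $\theta$ and the identity $\Phi_{12}=\int_0^\infty (\phi_{g,g}')^2$ obtained from $\phi_{g',g'}=-\phi_{g,g}''$ and $\phi_{g,g}'(0)=0$) to showing $2\sqrt{\Phi_{22}\Phi_{12}}>\psi_2^2$, both you and the paper apply Cauchy--Schwarz to $\int_0^\infty\phi_{g,g}\phi_{g,g}'$ and rule out equality by observing that it would force $\phi_{g,g}(y)=\psi_2 e^{-\alpha|y|}$, whose one-sided derivatives at $0$ disagree, contradicting $\phi_{g,g}'(0)=0$. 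The only differences are cosmetic (the paper normalises $\psi_2=1$ and phrases the final contradiction as $K'(0)=0$ versus $K'(0^+)=c<0$ rather than via $C^1$-regularity).
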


Finally, we remark that the asymptotic lower bound $8\sigma\sqrt{\Upsilon}\sum_{k=1}^K\gamma_k^2$ for estimating $\sum_{k=1}^K\gamma_k^2$ is achievable if we know $\sigma$ in addition to $\Upsilon$ and $S_k$'s:
\begin{prop}\label{MLE}
Consider the vector $\mathbf{z}_n:=(\underline{Z}_1,\dots,\underline{Z}_n)^*$ of observations generated from model \eqref{jump_model}. Let $\widehat{\gamma}^n$ be the $K$-dimensional random vector whose $k$-th component is equal to the $\lceil nS_k\rceil$-component of $2\sigma\sqrt{\Upsilon}n^{-\frac{1}{2}}V_n(\sigma)^{-1}D_n\mathbf{z}_n$, where the $n\times n$ matrices $V_n(\sigma)$ and $D_n$ are defined by \eqref{eq:matrices}. Then we have $n^{1/4}(\widehat{\gamma}^n-\gamma)\xrightarrow{d}\mathbf{N}(0,2\sigma\sqrt{\Upsilon}E_K)$ as $n\to\infty$, where $\gamma=(\gamma_1,\dots,\gamma_K)^*$. 
\end{prop}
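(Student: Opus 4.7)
Since the observations $\mathbf{z}_n$ generated by \eqref{jump_model} are jointly Gaussian, $\widehat{\gamma}^n$ is a Gaussian random vector, so its convergence in distribution reduces to verifying the asymptotic behaviour of its mean and covariance matrix. Writing $\mathbf{z}_n = \sigma\mathbf{W}_n + X_n\gamma + \boldsymbol{\epsilon}_n$, where $X_n$ is the $n\times K$ step-function design matrix with entries $\mathbf{1}_{\{S_k \le i/n\}}$, the first-difference operator $D_n$ maps $D_nX_n$ to the $n\times K$ matrix with a single $1$ in the $k$-th column at row $\eta_k := \lceil nS_k\rceil$, while $V_n(\sigma)$ is by construction the covariance of $D_n(\sigma\mathbf{W}_n+\boldsymbol{\epsilon}_n)$. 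Consequently,
\begin{equation*}
\widehat{\gamma}^n_k = 2\sigma\sqrt{\Upsilon}\,n^{-1/2}\sum_{k'=1}^K (V_n(\sigma)^{-1})_{\eta_k,\eta_{k'}}\gamma_{k'} + 2\sigma\sqrt{\Upsilon}\,n^{-1/2}\bigl(V_n(\sigma)^{-1}D_n(\sigma\mathbf{W}_n+\boldsymbol{\epsilon}_n)\bigr)_{\eta_k},
\end{equation*}
and the stochastic term is Gaussian with $(k,k')$-covariance equal to $4\sigma^2\Upsilon\,n^{-1/2}(V_n(\sigma)^{-1})_{\eta_k,\eta_{k'}}$.

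The remainder of the argument is therefore a purely linear-algebraic asymptotic analysis of the entries of the perturbed-tridiagonal matrix $V_n(\sigma)^{-1} = ((\sigma^2/n)I_n + \Upsilon T_n)^{-1}$ at the indices $\eta_k$, where $T_n := D_n D_n^*$. I would diagonalize $T_n$ in the discrete sine basis so that, up to boundary corrections,
\begin{equation*}
(V_n(\sigma)^{-1})_{ij} \approx \frac{2}{n+1}\sum_{m=1}^n \frac{\sin(im\pi/(n+1))\sin(jm\pi/(n+1))}{\sigma^2/n + 4\Upsilon\sin^2(m\pi/(2(n+1)))},
\end{equation*}
and then invoke Riemann-sum approximation. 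For the diagonal entries this reduces to the elementary identity $\int_0^{\pi/2}(a + 4\Upsilon\sin^2 u)^{-1}\,du = \pi\bigl(2\sqrt{a(a+4\Upsilon)}\bigr)^{-1}$ applied at $a = \sigma^2/n$, which produces the key asymptotic $n^{-1/2}(V_n(\sigma)^{-1})_{\lceil ns\rceil,\lceil ns\rceil} \to (2\sigma\sqrt{\Upsilon})^{-1}$ for every fixed $s\in(0,1)$. For the off-diagonal entries the underlying resolvent has correlation length of order $\sqrt{n\Upsilon}/\sigma$, so $(V_n(\sigma)^{-1})_{\eta_k,\eta_{k'}}$ decays exponentially in $\sqrt{n}\,|S_k-S_{k'}|$; since the $S_k$ are fixed and pairwise distinct this contribution is negligible, and the hypothesis $0 < S_1 < \cdots < S_K < 1$ ensures every $\eta_k$ sits macroscopically inside $\{1,\dots,n\}$ so that boundary effects are harmless.

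Plugging these two asymptotics back in: the deterministic part of $\widehat{\gamma}^n_k$ equals $\gamma_k + o(n^{-1/4})$, while $n^{1/2}\mathrm{Cov}(\widehat{\gamma}^n_k,\widehat{\gamma}^n_{k'}) \to 2\sigma\sqrt{\Upsilon}\,\delta_{k,k'}$, so after multiplying by $n^{1/4}$ the Gaussian vector $\widehat{\gamma}^n - \gamma$ has mean tending to $0$ and covariance tending to $2\sigma\sqrt{\Upsilon}\,E_K$, which yields the stated central limit theorem.

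The principal technical obstacle is establishing the off-diagonal exponential decay with a uniform quantitative rate; I would handle this by a contour-integral or generating-function representation for the inverse of the perturbed tridiagonal Toeplitz matrix, or alternatively by a stationary-phase analysis of the oscillatory Fourier sum displayed above. It is reassuring to note that the diagonal limit $(2\sigma\sqrt{\Upsilon})^{-1}$ is exactly the asymptotic Fisher information for each $\gamma_k$ furnished by Proposition \ref{LAN}, confirming by direct computation that $\widehat{\gamma}^n$ is asymptotically efficient for the parameter $\gamma$.
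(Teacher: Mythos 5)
Your overall strategy matches the paper's: since $\mathbf{z}_n$ is Gaussian, $\widehat{\gamma}^n$ is Gaussian and the problem reduces to identifying the asymptotics of the mean and covariance, both of which are governed by $n^{-1/2}\mathbf{e}_{i(k)}^*V_n(\sigma)^{-1}\mathbf{e}_{i(l)}$. The difference is in how that matrix quantity is controlled. The paper simply recycles Eq.\eqref{fisher}, which was already established in the proof of Proposition \ref{LAN}: there $V_n(\sigma)$ is diagonalized \emph{exactly} by the DCT-type matrix $U_n^{ij}=\frac{2}{\sqrt{2n+1}}\cos[\frac{2\pi}{2n+1}(i-\frac12)(j-\frac12)]$ (Lemma 1 of Kunitomo--Sato), the product $U_n^{i(k)j}U_n^{i(l)j}$ is split by a product-to-sum identity, the off-diagonal ``sum-frequency'' and ``difference-frequency'' terms are bounded by the Riemann--Lebesgue-type Lemma \ref{lemma:RL} to be $O(1)$, and the diagonal term is a Riemann sum giving $(2\sigma\sqrt{\Upsilon})^{-1}+O(n^{-1/2})$. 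So once Eq.\eqref{fisher} is in hand, Proposition \ref{MLE} is two lines.

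You instead propose to redo the spectral analysis from scratch in the discrete sine basis. Two remarks. First, the sine basis is \emph{not} the exact eigenbasis of $V_n(\sigma)=(\sigma^2/n)I_n+\Upsilon D_nD_n^*$, because $(D_nD_n^*)_{11}=1$ rather than $2$, giving mixed Neumann--Dirichlet boundary conditions; the correct exact eigenvectors are precisely the cosines the paper uses. You flag this as ``up to boundary corrections,'' and since the $S_k$ sit in the interior of $(0,1)$ those corrections are indeed negligible for the final statement, but it is cleaner to use the exact diagonalization. Second, your plan to establish \emph{exponential} decay of $(V_n(\sigma)^{-1})_{\eta_k,\eta_{k'}}$ in $\sqrt{n}|S_k-S_{k'}|$ via contour integrals or stationary phase is overkill: all that is needed for both the bias term and the off-diagonal covariance is that $n^{-1/2}(V_n(\sigma)^{-1})_{\eta_k,\eta_{k'}}\to0$ for $k\ne k'$, and the Riemann--Lebesgue bound $(V_n(\sigma)^{-1})_{\eta_k,\eta_{k'}}=O(1)$ (Lemma \ref{lemma:RL}) already delivers $O(n^{-1/2})$. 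You are identifying a real structural fact about the resolvent, but proving it is harder than the task requires and is the part of your outline you explicitly defer. Finally, a small bookkeeping slip: the covariance of the stochastic term should carry $n^{-1}$, not $n^{-1/2}$, i.e.\ $\mathrm{Cov}(\widehat{\gamma}^n_k,\widehat{\gamma}^n_{k'})=4\sigma^2\Upsilon\,n^{-1}(V_n(\sigma)^{-1})_{\eta_k,\eta_{k'}}$; your subsequent normalization $n^{1/2}\mathrm{Cov}\to2\sigma\sqrt{\Upsilon}\delta_{kk'}$ is consistent only with this corrected exponent, so it is a typo rather than an error in the computation itself.
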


\begin{rmk}
Although the estimator $\widehat{\gamma}^n$ constructed in Proposition \ref{MLE} is infeasible in practice because $\sigma$, $\Upsilon$, \dots are usually unknown, it is interesting in the sense that the form of the estimator suggests that a feasible efficient estimator might be obtained by plugging appropriately estimated values in unknown parameters. We leave this topic to future research. 
\end{rmk}

%\input{ILPA_R1/ILPA_simulation_R1}

%\appendix

%\renewcommand*{\theequation}{\textup{\Alph{section}}.\arabic{equation}}
%\def\thesection{Appendix \Alph{section}}

\section{Proofs}\label{secproofs}

%\addcontentsline{toc}{section}{Appendix}

\subsection{Asymptotic behavior of $N^n_t$}

The aim of this subsection is to prove the following result:
\begin{lem}\label{HJYlem2.2}
\ref{hypo:A4} implies that $N^n_t/n\to^p\int_0^t1/G_s\mathrm{d}s$ as $n\to\infty$ for every $t$.
\end{lem}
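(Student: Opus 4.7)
The plan is to express $N^n_t/n$ via a Riemann-sum approximation to $\int_0^{T_{N^n_t}} G_s^{-1}\,\mathrm{d}s$ and then conclude using $T_{N^n_t} \to^p t$ together with continuity in the upper limit of integration. First, by a standard localization that uses the stopping times appearing in \ref{hypo:A4v}, together with the fact that the c\`adl\`ag positive-valued process $G$ with $G_{s-}>0$ for every $s>0$ is bounded away from zero on any compact interval, I may assume that $G^n$ and $G$ are uniformly bounded and that $G$ is bounded below by a positive constant on $[0,t]$. Since $T_{N^n_t}\leq t<T_{N^n_t+1}$ and $T_{N^n_t+1}-T_{N^n_t}\leq r_n(t+1)=o_p(1)$ by \eqref{A4}, it follows that $T_{N^n_t}\to^p t$.

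Next, I exploit the conditional-mean identity \ref{hypo:A4ii}: for $p\notin\mathcal{N}^n$, the variable $\xi_p:=n(T_{p+1}-T_p)-G^n_{T_p}$ is a martingale increment with respect to $(\mathcal{F}^{(0)}_{T_p})$. Dividing by $nG^n_{T_p}$ and summing over $p=0,\dots,N^n_t-1$ yields an algebraic identity of the form
\begin{equation*}
\frac{N^n_t}{n}=\sum_{p=0}^{N^n_t-1}\frac{T_{p+1}-T_p}{G^n_{T_p}}-\frac{1}{n}\sum_{p=0,\,p\notin\mathcal{N}^n}^{N^n_t-1}\frac{\xi_p}{G^n_{T_p}}+R_n,
\end{equation*}
where $R_n$ collects the contribution of the indices $p\in\mathcal{N}^n$. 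A Riemann-sum approximation, justified by $r_n\to^p0$ (the intervals hitting the finitely many $\epsilon$-sized jumps of the c\`adl\`ag process $G$ contribute an error of size $O(r_n)$) together with $\sup_{s\leq T}|G^n_s-G_s|=O_p(n^{-\varpi})$ from \ref{hypo:A4iii}, gives $\sum_{p=0}^{N^n_t-1}(T_{p+1}-T_p)/G^n_{T_p}\to^p\int_0^t G_s^{-1}\,\mathrm{d}s$ using the already-established $T_{N^n_t}\to^p t$.

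For the martingale term, the $\mathcal{N}^n$ contribution is controlled pathwise: $|R_n|\leq(\#\mathcal{N}^n/n)\cdot\max_p|\xi_p|/\inf G^n=O_p(n^{\kappa-\xi})$ on choosing $\xi\in(\kappa,1)$ in $r_n=o_p(n^{-\xi})$ and using $|\xi_p|\leq nr_n+\sup G^n$; since $\kappa<1/2$, this vanishes. For $p\notin\mathcal{N}^n$, I view the partial sums $(\sum_{q\leq p,\,q\notin\mathcal{N}^n}\xi_q/G^n_{T_q})_p$ as a martingale stopped at the stopping time $N^n_t$. Combining the conditional-variance bound $E[\xi_p^2\,|\,\mathcal{F}^{(0)}_{T_p}]\leq nr_n(t+1)G^n_{T_p}$ (which follows from $(T_{p+1}-T_p)^2\leq r_n(t+1)(T_{p+1}-T_p)$ and \ref{hypo:A4ii}) with Doob's $L^2$ inequality applied up to a deterministic cutoff $K_n=\lfloor Mn\rfloor$ yields an $L^2$ estimate of order $n^{-\xi}$, which vanishes. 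The cutoff is legitimate because an a priori tightness argument gives $P(N^n_t>K_n)\to0$ as $M\to\infty$ uniformly in $n$, obtained by applying Markov's inequality to $\sum_{p<N^n_t}G^n_{T_p}/n$ and using the lower bound on $G^n$.

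The principal obstacle will be coordinating the martingale control with the random summation index $N^n_t$ and the random exceptional set $\mathcal{N}^n$; this is resolved by the localize-then-cutoff strategy just outlined, with the tightness of $N^n_t/n$ established prior to the martingale step. A secondary subtlety is the Riemann approximation near the jumps of the c\`adl\`ag limit $G$, which is handled by reducing to the finitely many jumps of size at least $\epsilon$ on $[0,t]$ and letting $\epsilon\downarrow0$ after $n\to\infty$.
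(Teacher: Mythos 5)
Your overall decomposition is essentially the same as the paper's: first reduce to tightness of $N^n_t/n$, then write $N^n_t/n$ as a Riemann sum of $1/G^n$ against the durations plus a martingale remainder and an exceptional-set remainder, bound the latter two, and pass $G^n\to G$. So the route is correct in spirit, but there is a concrete gap in how you control the martingale term, caused by an incomplete localization.

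You localize only the limit processes (via \ref{hypo:A4v}), assuming $G$ and $G^n$ bounded above and below. That is not enough, because your key estimate
\[
E\left[\xi_p^2\mid\mathcal{F}^{(0)}_{T_p}\right]\leq n\,r_n(t+1)\,G^n_{T_p}
\]
is obtained by taking the pathwise inequality $(T_{p+1}-T_p)^2\leq r_n(t+1)(T_{p+1}-T_p)$ inside a conditional expectation and then factoring $r_n(t+1)$ out. But $r_n(t+1)=\sup_p(T_p\wedge(t+1)-T_{p-1}\wedge(t+1))$ is a functional of the whole path and is \emph{not} $\mathcal{F}^{(0)}_{T_p}$-measurable, so it cannot be pulled out of $E[\,\cdot\mid\mathcal{F}^{(0)}_{T_p}]$. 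As written, the conditional-variance bound therefore fails, and with it the Doob $L^2$ estimate; the cutoff $K_n=\lfloor Mn\rfloor$ does not repair this because the issue is inside the conditional expectation, not at the random upper summation index. The paper handles exactly this by first invoking a stronger localization (its Lemma~\ref{HJYlem4.1}), which modifies the sampling scheme after the stopping time $R_n=\inf\{s:r_n(s)>\bar r_n\}$ (with $\bar r_n=n^{-\xi}$ deterministic) and agrees with the original on an event of probability tending to one; under the strengthened assumption one has $T_{p+1}-T_p\leq\bar r_n$ \emph{almost surely}, so $E[(T_{p+1}-T_p)^2\mid\mathcal F^{(0)}_{T_p}]\leq\bar r_n\,E[T_{p+1}-T_p\mid\mathcal F^{(0)}_{T_p}]$ is legitimate, and the martingale term is killed via Lenglart (the paper's Lemma~\ref{useful}). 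Your argument needs to invoke this sampling-scheme localization explicitly before the martingale step; a localization of $G$ alone does not produce the deterministic duration bound you are implicitly using. The same remark applies to your a priori tightness step for $N^n_t$, which the paper also carries out under \ref{hypo:SA4}. Once you add that localization, the rest of your plan (Riemann sum, $T_{N^n_t}\to^p t$, control of $\mathcal N^n$ via $\#\mathcal N^n=O_p(n^\kappa)$ with $\kappa<\xi$) goes through and coincides in substance with the paper's proof.
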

To prove this result, we introduce some preliminary results which we will also use later. Throughout the section, we fix constants $\gamma>0$ and $\xi\in(0,1)$ such that
\begin{equation}\label{est.xi}
\xi>\left(\frac{3}{4}+\gamma\right)\vee\left\{\frac{3}{4}+\frac{1}{\Gamma}+\gamma\left(1-\frac{2}{\Gamma}\right)\right\}\vee\left(\frac{4+\varpi}{4+2\varpi}+\gamma\right)\vee\left(\kappa+\frac{1}{2}+2\gamma\right)\vee(1-\varpi+\gamma),
\end{equation}
and set $\bar{r}_n=n^{-\xi}$ and $d_n=\lceil n^{-\gamma}\rceil$.

%%%%%%%%%%%%%%%%%%%%%%%%%%%%%%%%%%%%%%%%
%                             Fukasawa's lemma
%%%%%%%%%%%%%%%%%%%%%%%%%%%%%%%%%%%%%%%%

First, we remark the following result, which is more or less known and repeatedly used throughout the section:
\begin{lem}\label{useful}
Consider a sequence $(\mathcal{I}^n_j)_{j\in\mathbb{Z}_+}$ of filtrations and a sequence $(\zeta^n_j)_{j\in\mathbb{N}}$ of random variables adapted to the filtration $(\mathcal{I}^n_j)$ for each $n$. Let $\mathbb{T}$ be a non-empty set and suppose that a non-negative integer-valued variable $N^n(t)$ is given for each $n\in\mathbb{N}$ and each $t\in\mathbb{T}$. Suppose also that there is an element $t_0\in\mathbb{T}$ such that $N^n(t_0)$ is an $(\mathcal{I}^n_j)$-stopping time and $N^n(t)\leq N^n(t_0)$ for all $t\in\mathbb{T}$. If $\sum_{j=1}^{N^n(t_0)}E\left[\left|\zeta^n_j\right|^2\big|\mathcal{I}^n_{j-1}\right]\to^p 0$, then 
$\sup_{t\in\mathbb{T}}\left|\sum_{j=1}^{N^n(t)}\left\{\zeta^n_j-E\left[\zeta^n_j\big|\mathcal{I}^n_{j-1}\right]\right\}\right|\to^p 0.$
\end{lem}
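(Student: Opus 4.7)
The plan is to recognize this as a standard martingale maximal-inequality result, where the supremum over $t\in\mathbb{T}$ collapses, via the domination $N^n(t)\leq N^n(t_0)$, to a maximum of the centered partial sums up to the stopping time $N^n(t_0)$. Once this reduction is made, Lenglart's inequality finishes the job.

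First I would introduce the centered increments $\widetilde{\zeta}^n_j:=\zeta^n_j-E[\zeta^n_j\mid\mathcal{I}^n_{j-1}]$ and the partial-sum process $M^n_k:=\sum_{j=1}^k\widetilde{\zeta}^n_j$. By construction $(M^n_k)_{k\geq0}$ is an $(\mathcal{I}^n_k)$-martingale with predictable quadratic variation
\[
\langle M^n\rangle_k=\sum_{j=1}^k E\bigl[(\widetilde{\zeta}^n_j)^2\bigm|\mathcal{I}^n_{j-1}\bigr]\leq \sum_{j=1}^k E\bigl[|\zeta^n_j|^2\bigm|\mathcal{I}^n_{j-1}\bigr],
\]
since the conditional variance is bounded by the conditional second moment. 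In particular, the hypothesis implies $\langle M^n\rangle_{N^n(t_0)}\to^p 0$.

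The key observation is that for every $t\in\mathbb{T}$ the value $\sum_{j=1}^{N^n(t)}\widetilde{\zeta}^n_j=M^n_{N^n(t)}$ lies, pointwise, in the finite random set $\{M^n_0,M^n_1,\dots,M^n_{N^n(t_0)}\}$ because $N^n(t)\leq N^n(t_0)$. Hence
\[
\sup_{t\in\mathbb{T}}\Bigl|M^n_{N^n(t)}\Bigr|\leq \max_{0\leq k\leq N^n(t_0)}|M^n_k|,
\]
and the right-hand side is a genuine measurable random variable (a maximum over a random but finite index set). Now, because $N^n(t_0)$ is an $(\mathcal{I}^n_j)$-stopping time, the stopped process $M^n_{\cdot\wedge N^n(t_0)}$ remains a martingale with predictable quadratic variation $\langle M^n\rangle_{\cdot\wedge N^n(t_0)}$, so Lenglart's domination inequality applies: for any $\eta,\delta>0$,
\[
P\Bigl(\max_{0\leq k\leq N^n(t_0)}|M^n_k|>\eta\Bigr)\leq \frac{\delta}{\eta^2}+P\bigl(\langle M^n\rangle_{N^n(t_0)}>\delta\bigr).
\]
Sending $n\to\infty$ makes the second term vanish by the hypothesis, and then $\delta\downarrow0$ gives the conclusion.

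I do not expect any serious obstacle; the only subtle point is integrability bookkeeping, since the hypothesis only controls conditional second moments in probability and says nothing a priori about their finiteness or about the integrability of $M^n$. This is routinely handled by localizing at $\sigma^n_K:=\inf\{k:\sum_{j=1}^k E[|\zeta^n_j|^2\mid\mathcal{I}^n_{j-1}]>K\}$ for a fixed $K$: on the event $\{\langle M^n\rangle_{N^n(t_0)}\leq K\}$, which has probability tending to one, the stopping time $\sigma^n_K$ dominates $N^n(t_0)$, so Lenglart's inequality can be applied to the square-integrable stopped martingale $M^n_{\cdot\wedge\sigma^n_K}$ and the contribution from the complementary event is negligible.
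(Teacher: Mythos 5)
Your proof is correct and follows the same route the paper has in mind: the paper omits a proof and simply cites Lemma 2.3 of Fukasawa (2010), which is precisely the discrete-time Lenglart domination argument you give (form the martingale of centered increments, bound the sup over $t$ by the running maximum up to the stopping time $N^n(t_0)$, apply Lenglart, and localize on the predictable bracket to sidestep integrability issues). Nothing to add.
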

The proof of this lemma is essentially the same as that of Lemma 2.3 from \cite{Fu2010b}, so we omit it. 

Next we show that we may assume that the following strengthened version of \ref{hypo:A4}:
\begin{enumerate}[label={\normalfont[SA4]}]

\item \label{hypo:SA4} We have \ref{hypo:A4}, and for every $n$ it holds that
\begin{equation}\label{SA4}
\sup_{p\geq0}(T_p-T_{p-1})\leq\bar{r}_n.
\end{equation}

\end{enumerate}

The following lemma is a version of Lemma 4.1 from \citet{HJY2011}:
\begin{lem}\label{HJYlem4.1}
Assume \ref{hypo:A4}. One can find sampling schemes $(\widetilde{T}_{p})$ and $(\widetilde{\tau}^k_{p})$ $(k=1,\dots,d)$ such that
\begin{enumerate}[noitemsep,label=(\roman*),font=\normalfont]

%\item $\widetilde{T}_{p}$ and $\widetilde{\tau}^k_p$ are $\mathbf{H}^n$-stopping times for every $k,p$.

\item $(\widetilde{T}_{p})$ and $(\widetilde{\tau}^k_{p})$ satisfy \ref{hypo:SA4} with the same limiting processes $G$ and $\chi$ as those of the original sampling schemes,

\item There is a subset $\Omega^{(0)}_{n}$ of $\Omega^{(0)}$ such that $\lim_nP^{(0)}(\Omega^{(0)}_{n})=1$. Moreover, on $\Omega^{(0)}_{n}$ we have $T_p\wedge T=\widetilde{T}_p\wedge T$ and $\tau^k_p\wedge T=\widetilde{\tau}^k_{p}\wedge T$ for all $k,p$.

\end{enumerate}
\end{lem}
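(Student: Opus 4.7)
The plan is a standard localization: truncate the original schemes at the first time a duration exceeds $\bar r_n$ and continue along a deterministic grid of spacing $\bar r_n$. The conditions in \ref{hypo:A4} other than \eqref{A4} only control behaviour on $[0,T]$ (cf.\ \ref{hypo:A4iii}--\ref{hypo:A4iv}), so modifying the schemes past a time slightly larger than $T$ will leave them intact on a high-probability event. Concretely, introduce the $\mathbf{F}^{(0)}$-stopping time and index
\[
\tau_n := \inf\{T_p : p \geq 0,\ T_p - T_{p-1} > \bar r_n\},\qquad N_n := \sup\{p \geq 0 : T_p < \tau_n\},
\]
and define the new schemes by
\[
\widetilde T_p := T_p,\ \widetilde\tau^k_p := \tau^k_p\quad (p \leq N_n),\qquad
\widetilde T_p := T_{N_n} + (p-N_n)\bar r_n,\ \widetilde\tau^k_p := \widetilde T_p\quad (p > N_n).
\]
By construction every gap $\widetilde T_p - \widetilde T_{p-1}$ is bounded by $\bar r_n$, so \eqref{SA4} holds globally, and \ref{hypo:H} is immediate.

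For the good set take $\Omega^{(0)}_n := \{r_n(T+1) \leq \bar r_n\}$; applying \eqref{A4} at $t=T+1$ gives $P^{(0)}(\Omega^{(0)}_n) \to 1$. On $\Omega^{(0)}_n$ no gap with $T_p \leq T+1$ exceeds $\bar r_n$, hence $\tau_n > T+1$, and the straddling bound $(T+1) - T_{N_n} \leq r_n(T+1) \leq \bar r_n$ forces $T_{N_n} \geq T+1-\bar r_n > T$ for all $n$ large enough. For every $p$, then, either $p \leq N_n$, in which case $\widetilde T_p = T_p$ and $\widetilde\tau^k_p = \tau^k_p$, or $p > N_n$, in which case $\widetilde T_p \geq T_{N_n}+\bar r_n > T$ and $T_p \geq \tau_n > T$. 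In either case $T_p \wedge T = \widetilde T_p \wedge T$ and $\tau^k_p \wedge T = \widetilde \tau^k_p \wedge T$, proving (ii).

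For (i) it remains to exhibit auxiliary data realising the conditions of \ref{hypo:A4} with the same limits $G,\chi$. Set
\[
\widetilde G^n_t := G^n_t\mathbf{1}_{\{t<\tau_n\}} + n^{1-\xi}\mathbf{1}_{\{t\geq\tau_n\}},\quad
\widetilde\chi^{n,kl}_t := \chi^{n,kl}_t\mathbf{1}_{\{t<\tau_n\}} + \mathbf{1}_{\{t\geq\tau_n\}},\quad
\widetilde{\mathcal N}^n := \mathcal N^n \cup \{N_n\}.
\]
For $p \notin \widetilde{\mathcal N}^n$ with $p < N_n$ the property \ref{hypo:A4ii} is inherited verbatim, while for $p > N_n$ the deterministic identities $\widetilde T_{p+1} - \widetilde T_p = \bar r_n$ and $\widetilde\tau^k_{p+1} = \widetilde T_{p+1}$ match $\widetilde G^n$ and $\widetilde\chi^n$ by design. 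Only one extra index has been added, so the size bound of \ref{hypo:A4i} is preserved. Because on $\Omega^{(0)}_n$ the processes $\widetilde G^n$ and $\widetilde\chi^n$ coincide with $G^n$ and $\chi^n$ throughout $[0,T]$, and $P^{(0)}((\Omega^{(0)}_n)^c)\to 0$, the uniform convergence statements \ref{hypo:A4iii}--\ref{hypo:A4iv} transfer with the original $O_p(n^{-\varpi})$ rate, and \ref{hypo:A4v} is automatic since the limit processes are unchanged. The chief subtlety worth flagging is the use of the $T+1$ buffer: if one replaced it by $\{r_n(T)\leq \bar r_n\}$, then $T_{N_n}$ could fall below $T$ and $\widetilde T_{N_n+1}$ could land in $[0,T]$ while $T_{N_n+1}$ lies above $T$, which would violate (ii); the extra unit of buffer, legitimate because \eqref{A4} is valid at every $t>0$, removes this pathology.
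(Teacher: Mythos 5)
Your overall strategy---cut the original scheme at the first time a duration exceeds $\bar r_n$ and continue along a deterministic grid, while taking the good set $\Omega^{(0)}_n$ from \eqref{A4}---is the right one, and your observations about the $T+1$ buffer and about transferring \ref{hypo:A4iii}--\ref{hypo:A4v} are sound. However, there is a genuine flaw: the index at which your scheme transitions to the grid and the threshold you use in $\widetilde G^n$ (and $\widetilde\chi^n$) do not line up. You switch to the grid at $N_n$, so $\widetilde T_p = T_{N_n}+(p-N_n)\bar r_n$ for $p>N_n$, but $\widetilde G^n$ only changes value at $\tau_n = T_{N_n+1}$. Since by definition $T_{N_n+1}-T_{N_n}>\bar r_n$, we always have $\widetilde T_{N_n+1}=T_{N_n}+\bar r_n<\tau_n$, and more generally every $p$ with $N_n<p<N_n+(T_{N_n+1}-T_{N_n})/\bar r_n$ has $\widetilde T_p<\tau_n$, hence $\widetilde G^n_{\widetilde T_p}=G^n_{\widetilde T_p}\ne n^{1-\xi}=E[n(\widetilde T_{p+1}-\widetilde T_p)\mid\mathcal F^{(0)}_{\widetilde T_p}]$; the same mismatch hits $\widetilde\chi^n$. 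So condition \ref{hypo:A4ii} fails for all those $p$, the number of which is of order $(T_{N_n+1}-T_{N_n})/\bar r_n$ and is not bounded, and the claim ``only one extra index has been added'' is false. Replacing the threshold $\tau_n$ by $T_{N_n}$ would restore the identity but would destroy $\mathbf F^{(0)}$-optionality of $\widetilde G^n$, because $T_{N_n}$ is not a stopping time (knowing $N_n=q$ requires observing $T_{q+1}$).

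The paper avoids this in one stroke by cutting at $R_n:=\inf\{s:r_n(s)>\bar r_n\}$, which \emph{is} a stopping time because $r_n$ is an adapted \emph{continuous} nondecreasing process, and which falls strictly between the last good and the first bad observation, at $T_{N_n}+\bar r_n$. The transitioning $\widetilde T_q$ is then set to $R_n$ itself (via $T_p\wedge R_n$), so the deterministic grid begins exactly at $R_n$, in perfect alignment with the threshold $1_{[R_n,\infty)}$ in $\widetilde G^n$ and $\widetilde\chi^n$, and only the single straddling index $\{p:T_p<R_n\le T_{p+1}\}$ needs to be added to $\mathcal N^n$. To salvage your version you would either need to adopt a similar intermediate stopping time, or enlarge $\widetilde{\mathcal N}^n$ to include all the misaligned indices and then give a further argument (using $t-T_{N_n}\le r_n(t)=o_p(\bar r_n)$ on $\{\tau_n>t\}$) that their count with $\widetilde T_p\le t$ is still $O_p(n^\kappa)$; neither step is in your write-up. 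A smaller but real gap: you assert that \ref{hypo:H} is immediate, but ``sampling scheme'' requires the $\widetilde T_p$ to be $\mathbf F^{(0)}$-stopping times, which is not automatic from your definition since $N_n$ is not a stopping time; the paper sidesteps this entirely by exhibiting the representation \eqref{Ttilderep} for $\widetilde T_p$ in terms of restrictions of stopping times.
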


\begin{proof}
Set $R_n=\inf\{s:r_n(s)>\bar{r}_n\}$. Since $(r_n(s))_{s\geq0}$ is an $\mathbf{F}^{(0)}$-adapted continuous nondecreasing process, $R_n$ is an $\mathbf{F}^{(0)}$-stopping time. Moreover, $\Omega^{(0)}_{n}:=\{R_n>T\}$ satisfies $\lim_nP^{(0)}(\Omega^{(0)}_{n})=1$ by \eqref{A4}. Now we define $(\widetilde{T}_{p})_{p=-1}^\infty$ sequentially by $\widetilde{T}_{-1}=0$ and
\begin{equation*}
\widetilde{T}_{p}=
\left\{\begin{array}{ll}
T_p\wedge R_n, & \textrm{if $T_{p-1}<R_n$},\\
\widetilde{T}_{p-1}+n^{-1}, & \textrm{otherwise}.
\end{array}\right.
\end{equation*}
Since we can rewrite $\widetilde{T}_p$ as
\begin{equation}\label{Ttilderep}
\widetilde{T}_p=\left(T_p\wedge R_n\right)_{\{T_{p-1}<R_n\}}\wedge\left(\widetilde{T}_{p-1}\vee R_n+n^{-1}\right)_{\{T_{p-1}\geq R_n\}},
\end{equation}
$\widetilde{T}_p$ is an $\mathbf{F}^{(0)}$-stopping time. Then it is obvious that $(\widetilde{T}_{p})$ is a sampling scheme and satisfies \eqref{SA4}.  After that, for each $k$ we define $(\widetilde{\tau}^k_{p})_{p=-1}^\infty$ sequentially by $\widetilde{\tau}^k_{-1}=0$ and
\begin{equation*}
\widetilde{\tau}^k_{p}=
\left\{\begin{array}{ll}
\tau^k_p\wedge R_n, & \textrm{if $T_{p-1}<R_n$},\\
\widetilde{T}_{p}, & \textrm{otherwise}.
\end{array}\right.
\end{equation*}
Since $\tau^k_p$ has a similar representation to Eq.\eqref{Ttilderep}, it is an $\mathbf{F}^{(0)}$-stopping time. Moreover, it is evident that $(\widetilde{T}_{p})$ and $(\widetilde{\tau}^k_{p})$ satisfy \ref{hypo:H} and (ii).

Next, for each $n\geq1$ and any $k,l=1,\dots,d$ we define the processes $\widetilde{G}^n$ and $\widetilde{\chi}^{n}$ by
\begin{align*}
\widetilde{G}^n_t=G^n_t1_{[0,R_n)}(t)+1_{[R_n,\infty)}(t),\qquad
\widetilde{\chi}^{n,kl}_{t}=\chi^{n,kl}_t1_{[0,R_n)}(t)+1_{[R_n,\infty)}(t).
\end{align*}
These processes are obviously $\mathbf{F}^{(0)}$-optional. Moreover, by construction $(\widetilde{T}_{p+1}-\widetilde{T}_{p})$ is equal to $(T_{p+1}-T_p)$ on the set $\{T_{p+1}<R_n\}$, and to $n^{-1}$ on the set $\{T_{p}\geq R_n\}$. Therefore, setting $\widetilde{\mathcal{N}}^n=\mathcal{N}^n\cup\{p\in\mathbb{Z}_+:T_{p}<R_n\leq T_{p+1}\}$, we have $\widetilde{G}^n_{\widetilde{T}_{p}}=E\left[n(\widetilde{T}_{p+1}-\widetilde{T}_{p})\big|\mathcal{F}^{(0)}_{\widetilde{T}_{p}}\right]$ for every $p\in\mathbb{Z}_+-\widetilde{\mathcal{N}}^n$. Similarly, we also have $\widetilde{\chi}^{n,kl}_{\widetilde{T}_{p}}=P\left(\widetilde{\tau}^k_{p+1}=\widetilde{\tau}^l_{p+1}\big|\mathcal{F}^{(0)}_{\widetilde{T}_{p}}\right)$ for every $p\in\mathbb{Z}_+-\widetilde{\mathcal{N}}^n$. Moreover, since $R_n\to\infty$ as $n\to\infty$ by \eqref{A4}, we have $\lim_nP(\sup_{0\leq t\leq T}|\widetilde{G}^n_t-G^n_t|>0)=0$ and $\lim_nP(\sup_{0\leq t\leq T}\|\widetilde{\chi}^{n}_t-\chi^{n}_t\|>0)=0$. This implies that $(\widetilde{T}_{p})$ and $(\widetilde{\tau}^k_{p})$ satisfy (i), and thus the proof is completed.
\end{proof}

\begin{proof}[\upshape{\textbf{Proof of Lemma \ref{HJYlem2.2}}}]
A standard localization argument, based on Lemma \ref{HJYlem4.1}, allows us to assume the strengthened version \ref{hypo:SA4} of \ref{hypo:A4}. 

We begin by proving $N^n_t=O_p(n)$. \eqref{SA4}, \ref{hypo:A4i}--(ii) and \eqref{est.xi} yield
\begin{equation}\label{eq.stop.tight}
N^n_{t}=\sum_{p=2}^{N^n_{t}+1}\frac{E\left[n(T_p-T_{p-1})\big|\mathcal{F}^{(0)}_{T_{p-1}}\right]}{G^n_{T_{p-1}}}+o_p(n).
\end{equation}
On the other hand, \eqref{SA4} again imply that
$E\left[\sum_{p=2}^{N^n_{t}+1}E\left[n(T_p-T_{p-1})\big|\mathcal{F}^{(0)}_{T_{p-1}}\right]\right]
\leq nt+n\bar{r}_n$, hence $\sum_{p=2}^{N^n_{t}+1}E\left[n(T_p-T_{p-1})\big|\mathcal{F}^{(0)}_{T_{p-1}}\right]=O_p(n)$. Now, since $\sup_{0\leq s\leq T}(1/G^n_s)=O_p(1)$ by \ref{hypo:A4iii}, \eqref{eq.stop.tight} yields $N^n_t=O_p(n)$.

Next, \eqref{SA4} as well as the tightness of $N^n_t/n$ and $\sup_{0\leq s\leq t}(1/G^n_s)$ imply that
\begin{equation*}
\sum_{p=2}^{N^n_{t}+1}\frac{E\left[(T_p-T_{p-1})^2\big|\mathcal{F}^{(0)}_{T_{p-1}}\right]}{(G^n_{T_{p-1}})^2}
\leq\bar{r}_n^2\left(\sup_{0\leq s\leq t}\frac{1}{G^n_s}\right)^2N^n_t=o_p(1),
\end{equation*}
hence Lemma \ref{useful} yields
\begin{equation*}
\sum_{p=2}^{N^n_{t}+1}\frac{E\left[(T_p-T_{p-1})\big|\mathcal{F}^{(0)}_{T_{p-1}}\right]}{G^n_{T_{p-1}}}
=\sum_{p=2}^{N^n_{t}+1}\frac{(T_p-T_{p-1})}{G^n_{T_{p-1}}}+o_p(1)
=\int_0^t\frac{1}{G_s}\mathrm{d}s+o_p(1).
\end{equation*}
Combining this with Eq.\eqref{eq.stop.tight}, we obtain the desired result.
\end{proof}

\subsection{Proof of Theorem \ref{mainthm}}

%%%%%%%%%%%%%%%%%%%%%%%%%%%%%%%%%%%%%%%%%%%%%%%%%
%                       Outline
%%%%%%%%%%%%%%%%%%%%%%%%%%%%%%%%%%%%%%%%%%%%%%%%%

\subsubsection{Outline of the proof}

First, we note that we may also strengthen conditions \ref{hypo:A1}--\ref{hypo:A3} due to a standard localization procedure which is described in detail e.g.~in Lemma 4.4.9 of \cite{JP2012} as follows:
\begin{enumerate}[noitemsep,label={\normalfont[SA\arabic*]}]

\item \label{hypo:SA1} $a_t$ is bounded, and there is a constant $\Lambda$ such that
\begin{equation}\label{SA5}
E\left[\|a_{t_1}-a_{t_2}\|^2|\mathcal{F}_{t_1\wedge t_2}\right]
\leq \Lambda E\left[|t_1-t_2|^\varpi|\mathcal{F}_{t_1\wedge t_2}\right]
\end{equation}
for any bounded $\mathbf{F}^{(0)}$-stopping times $t_1$ and $t_2$.

\item \label{hypo:SA2} $\sigma_t$ is bounded, and there is a constant $\Lambda$ such that
\begin{equation}\label{SA3}
E\left[\|\sigma_{t_1}-\sigma_{t_2}\|^2|\mathcal{F}_{t_1\wedge t_2}\right]
\leq \Lambda E\left[|t_1-t_2|^\varpi|\mathcal{F}_{t_1\wedge t_2}\right]
\end{equation}
for any bounded $\mathbf{F}^{(0)}$-stopping times $t_1$ and $t_2$.

\item \label{hypo:SA3} There is a constant $\Gamma>4$ and a constant $\Lambda$ such that the process $\int\|z\|^\Gamma Q_t(\mathrm{d}z)$ is bounded and 
\[
E\left[\|\Upsilon_{t_1}-\Upsilon_{t_2}\|^2|\mathcal{F}_{t_1\wedge t_2}\right]
\leq \Lambda E\left[|t_1-t_2|^\varpi|\mathcal{F}_{t_1\wedge t_2}\right]
\]
for any bounded $\mathbf{F}^{(0)}$-stopping times $t_1$ and $t_2$. Moreover, $\Upsilon_t$ is c\`adl\`ag.

\end{enumerate}

%%%%%%%%%%%%%%%%  Notation %%%%%%%%%%%%%%%%%%%%%%%

Next we introduce some notation. Set $I_p=[T_{p-1},T_p)$ for every $p\in\mathbb{Z}_+$. 
% HY notation
For any process $V$ and any (random) interval $I=[S,T)$, we define the random variable $V(I)$ by $V(I)=V_T-V_S$. We also set $|I|=T-S$. For any real-valued function $u$ on $\mathbb{R}$, we set $u^n_p=u(p/k_n)$ for $p\in\mathbb{Z}$. For any $d$-dimensional processes $U$, $V$, any $k,l\in\{1,\dots,d\}$ and any $u,v\in\{g,g'\}$, we define the process $\Xi^{(k,l)}_{u,v}(U,V)^n$ by
\begin{align*}
\Xi^{(k,l)}_{u,v}(U,V)^n_t=\frac{1}{\psi_2 k_n}\sum_{i=k_n}^{N^n_t-k_n+1}\breve{U}(u)^k_i\breve{V}(v)^l_i,
\end{align*}
where 
$\breve{U}(u)^k_{i}=\sum_{p=k_n}^{N^n_T-k_n} u^n_{p-i}U^k(I_p)$ and $\breve{V}(v)^l_{i}$ is defined analogously. Moreover, we define the processes $A$ and $M$ by $A_t=\int_0^ta_s\mathrm{d}s$ and $M_t=\int_0^t\sigma_s\mathrm{d}W_s$ respectively, and also define the $d$-dimensional process $\mathfrak{E}$ by
\begin{align*}
\mathfrak{E}^k_t=-\frac{1}{k_n}\sum_{p=1}^\infty\epsilon^k_{\tau^k_p}1_{\{\tau^k_p\leq t\}},\qquad
t\in\mathbb{R}_+,\quad
k=1,\dots,d.
\end{align*}
It can easily be seen that $\mathfrak{E}$ is a purely discontinuous locally square-integrable martingale on $\mathcal{B}$ under \ref{hypo:SA3}. 
\if0
Finally, for any $d$-dimensional process $V$ we define the $\mathbb{R}^d\otimes\mathbb{R}^d$-valued process $\mathbf{\Xi}[V]^n$ by
\begin{equation*}
\mathbf{\Xi}[V]^{n,kl}=\Xi^{(k,l)}_{g,g}(V,V)^n+\Xi^{(k,l)}_{g,g'}(V,\mathfrak{E})^n
+\Xi^{(l,k)}_{g,g'}(V,\mathfrak{E})^n
+\Xi^{(k,l)}_{g',g'}(\mathfrak{E},\mathfrak{E})^n,\qquad
k,l=1,\dots,d.
\end{equation*}
\fi

%%%%%%%%%%%%%%%%%%%%%%%%%%%%%%%%%%%%%%%%%%%%%%%%%%%
%          Synchronization lemma
%%%%%%%%%%%%%%%%%%%%%%%%%%%%%%%%%%%%%%%%%%%%%%%%%%%

Now we turn to the outline of the proof. In the first step we show that the errors from end effects and interpolations to the synchronized sampling times are asymptotically negligible:
\begin{prop}\label{synchronization}
Assume \ref{hypo:W}, \ref{hypo:H} and \ref{hypo:SA1}--\ref{hypo:SA4}. Then 
$\sup_{0\leq t\leq T}\left\|\widetilde{\MRC}[Y]^{n}_t-\mathbf{\Xi}[X]^{n}_t+\frac{\psi_1}{2\psi_2k_n^2}[Y,Y]^{n}_t\right\|=o_p(n^{-1/4})$ 
as $n\to\infty$, where $\mathbf{\Xi}[X]^{n}$ is the $\mathbb{R}^d\otimes\mathbb{R}^d$-valued process such that
\begin{equation*}
\mathbf{\Xi}[X]^{n,kl}=\Xi^{(k,l)}_{g,g}(X,X)^n+\Xi^{(k,l)}_{g,g'}(X,\mathfrak{E})^n
+\Xi^{(l,k)}_{g,g'}(X,\mathfrak{E})^n
+\Xi^{(k,l)}_{g',g'}(\mathfrak{E},\mathfrak{E})^n,\qquad
k,l=1,\dots,d.
\end{equation*}
\end{prop}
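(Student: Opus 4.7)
The plan is to write $\widetilde{Y}_{i,T}=\widetilde{X}_{i,T}+\widetilde{\epsilon}_{i,T}$, expand $(\psi_2k_n)^{-1}\sum_i\widetilde{Y}_{i,T}\widetilde{Y}_{i,T}^{\ast}$ bilinearly into four pieces, and for each summand reduce the interpolated pre-average based on the synchronized observation times $(\tau^k_p)$ to the single-interval pre-average based on the grid $(I_p)=(T_{p-1},T_p)$. Once we have the two key approximations
\[
\widetilde{X}^k_{i,T}=\breve{X}(g)^k_i+R^{X}_i,\qquad \widetilde{\epsilon}^k_{i,T}=\breve{\mathfrak{E}}(g')^k_i+R^{\epsilon}_i,
\]
the four bilinear products of the leading terms yield exactly $\mathbf{\Xi}[X]^{n,kl}$, so it remains to show that every cross-product containing at least one remainder $R$ is $o_p(n^{-1/4})$ uniformly in $t\in[0,T]$. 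The bias-correction term $-(\psi_1/2\psi_2k_n^2)[Y,Y]^n_t$ appears identically on both sides and does not need to be touched.

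For the latent part, [H] gives $T_{p-1}<\tau^k_p\leq T_p$, so
\[
X^k(\tau^k_{p-1},\tau^k_p]-X^k(I_p)=(X^k_{T_{p-1}}-X^k_{\tau^k_{p-1}})-(X^k_{T_p}-X^k_{\tau^k_p}).
\]
An Abel-summation on $p$ combined with the piecewise-Lipschitz property of $g'$ from [W] produces a principal error of the form $-k_n^{-1}\sum_p g'^n_{p-i}(X^k_{\tau^k_p}-X^k_{T_p})$, a quadratic Taylor remainder of pointwise size $O(k_n^{-2})$, and the jittered boundary contributions involving $\mathring{X}^k_0$ and $\mathring{X}^k_T$. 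Under [SA2], $E[\|X^k(I_p)\|^2\mid\mathcal{F}^{(0)}_{T_{p-1}}]\lesssim T_p-T_{p-1}\leq\bar{r}_n$, so each interpolation difference $X^k_{\tau^k_p}-X^k_{T_p}$ has conditional second moment $O(\bar{r}_n)$; the drift contribution $A=\int a\,\mathrm{d}s$ is handled by a crude $O(\bar{r}_n)$ bound from [SA1]. The noise part is treated via the identity $\mathfrak{E}^k(I_p)=-k_n^{-1}\epsilon^k_{\tau^k_p}$ for $p\geq1$ (exactly one $\tau^k_{\cdot}$ falls in $I_p$ by [H]) together with the summation-by-parts formula displayed in Section \ref{setting}, which after approximating $g^n_{p+1-i}-g^n_{p-i}$ by $k_n^{-1}g'^n_{p-i}$ produces precisely $\breve{\mathfrak{E}}(g')^k_i$; the averaged noises $\mathring{\epsilon}^k_0,\mathring{\epsilon}^k_T$ have conditional variance $O(k_n^{-1})$ by conditional independence and the moment control of [SA3], and [W](ii) makes the truncation of the unbounded support of $g$ at distance $k_n$ from $i$ harmless.

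The cross errors are then controlled entry by entry. Cross-products linear in the noise and one remainder form $\mathbf{F}^{(0)}$-conditional martingale increments, to which a Burkholder--Davis--Gundy inequality applies using the boundedness of $\int\|z\|^\Gamma Q_t(\mathrm{d}z)$ from [SA3]; cross-products quadratic in $X$ are bounded by an It\^o-isometry argument for the $M$-part and an $L^1$-estimate for the $A$-part. Since a typical term $\widetilde{X}^k_{i,T}$ has pointwise size $O_p(\sqrt{k_n/n})$ and a remainder $R^X_i$ is of order $O_p(\bar{r}_n^{1/2}/\sqrt{k_n})$ after Abel summation over $O(k_n)$ effective indices, the sum $(\psi_2k_n)^{-1}\sum_i\widetilde{X}^k_{i,T}R^X_i$ is controlled by $n^{-\xi/2}$ up to logarithmic factors; by the choice of $\xi$ in \eqref{est.xi} this is $o_p(n^{-1/4})$. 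Uniformity in $t\in[0,T]$ is obtained by applying Doob's maximal inequality to the martingale components and by exploiting monotonicity of $t\mapsto N^n_t$ for the non-martingale ones.

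The main obstacle I anticipate is the cross interaction between the latent interpolation error $X^k_{\tau^k_p}-X^k_{T_p}$ and the noise $\epsilon^l_{\tau^l_p}$ from a different asset, because the observation times are endogenous and the interpolation errors depend on the same sampling scheme that determines which noise values enter the estimator. This is precisely where [A4] becomes crucial: the enhanced smallness $\bar{r}_n=o(n^{-\xi})$ for every $\xi<1$ allows one to sacrifice a factor $k_n$ in the BDG bound while still meeting the $o_p(n^{-1/4})$ target, and the c\`adl\`ag regularity \ref{hypo:A4iii}--\ref{hypo:A4v} of the limits $G,\chi$ is what keeps the conditional variance processes under control along stopping-time increments of size $\bar{r}_n$.
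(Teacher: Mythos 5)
Your overall architecture matches the paper's: expand $\widetilde{Y}=\widetilde{X}+\widetilde{\epsilon}$ bilinearly, replace the interpolated pre-averages by grid-based ones ($\breve{X}(g)_i$, $\breve{\mathfrak{E}}(g')_i$) via summation by parts, and control the cross-terms involving at least one remainder. The paper indeed proves (and uses) the $L^2$ bounds for $\widetilde{X}_{i,T}-\breve{X}(g)_i$ that you have in mind (Lemma 6.6), and the $X$-remainder cross-terms are disposed of exactly as you sketch, with the localization $R^n_\alpha$ and crude Cauchy--Schwarz estimates supplying the $o_p(n^{-1/4})$ rate because $\xi>3/4$.

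However, there is a genuine gap in how you treat the noise remainder $R^{\epsilon}_i=\widetilde{\epsilon}^k_{i,T}-\breve{\mathfrak{E}}(g')^k_i$, which is the hard part of this proposition. You write that replacing $g^n_{p+1-i}-g^n_{p-i}$ by $k_n^{-1}(g')^n_{p-i}$ ``produces precisely $\breve{\mathfrak{E}}(g')^k_i$'', and that the remaining cross-products are handled ``entry by entry'' via BDG. But [W](i) only makes $g'$ \emph{piecewise} Lipschitz, so the second-difference error $\Delta^2(g)^n_{p}=k_n(g^n_{p+1}-g^n_p)-(g')^n_p$ is $O(k_n^{-1})$ only away from the finitely many kinks of $g'$; at the kink indices it is merely $O(1)$. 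This makes $E_0[|R^{\epsilon}_i|^2]\asymp k_n^{-2}$ (dominated by the kink contribution), and a Cauchy--Schwarz bound on $(\psi_2 k_n)^{-1}\sum_i R^{\epsilon}_i\,\widetilde{\epsilon}^l_{i,T}$ then yields $k_n^{-1}\cdot N^n_T\cdot k_n^{-1}\cdot k_n^{-1/2}\asymp n/k_n^{5/2}\asymp n^{-1/4}$, which is exactly borderline and does \emph{not} give $o_p(n^{-1/4})$. The paper overcomes this by explicitly isolating the finite kink set $\mathcal{P}_n$ (of cardinality $\leq 2\Lambda$), treating the $\mathcal{P}_n^c$ part by the $O(k_n^{-1})$ gain, and attacking the $\mathcal{P}_n$ part through the $\mathbb{I}+\mathbb{II}+\mathbb{III}$ decomposition in the proof of (6.9)--(6.10), using the Davis/Lenglart inequalities plus a $C$-tightness argument for the process $n^{1/4}\mathbb{III}_t$ to get the needed uniform-in-$t$ bound. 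Your sketch does not identify the kink set as the obstruction nor indicate how to extract the extra cancellation, so as stated it would not close the argument; note also that the obstacle you anticipate (interpolation error of $X$ against foreign-asset noise) is not the one that actually requires the sophisticated treatment in the paper -- that one is disposed of by the crude estimates, while the pure noise--noise kink term is the critical case.
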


%%%%%%%%%%%%%%%%%%%%%%%%%%%%%%%%%%%%%%%%%%%%%%%%%%
%              Martingale approximation
%%%%%%%%%%%%%%%%%%%%%%%%%%%%%%%%%%%%%%%%%%%%%%%%%%

In the next step we prove a martingale approximation of the error process. For any $d$-dimensional processes $U,V$, any $k,l\in\{1,\dots,d\}$ and any real-valued functions $u,v$ on $[0,1]$, we define the processes $\mathbb{M}^{(k,l)}_{u,v}(U,V)^n$ and $\mathbb{L}^{(k,l)}_{u,v}(U,V)^n$ by
\begin{align*}
\mathbb{M}^{(k,l)}_{u,v}(U,V)^n_t=\sum_{q=k_n}^{N^n_t+1}C^n_{u,v}(U)^k_qV^l(I_q),\qquad
\mathbb{L}^{(k,l)}_{u,v}(U,V)^n_t=\mathbb{M}^{(k,l)}_{u,v}(U,V)^n_t+\mathbb{M}^{(l,k)}_{v,u}(V,U)^n_t,
\end{align*}
where
\begin{align*} 
C^n_{u,v}(U)^k_q=\sum_{p=(q-d_n)\vee k_n}^{q-1}c^n_{u,v}(p,q)U^k(I_p),\qquad
c^n_{u,v}(p,q)=\frac{1}{\psi_2k_n}\sum_{i=k_n}^\infty u^n_{p-i}v^n_{q-i}.
\end{align*}
Here, let us recall that the number $d_n$ is given by $d_n=\lceil n^{-\gamma}\rceil$ and $\gamma$ satisfies \eqref{est.xi}. 
Moreover, define the $\mathbb{R}^d\otimes\mathbb{R}^d$-valued process $\mathbf{L}[M]^n$ by
\begin{align*}
\mathbf{L}[M]^{n,kl}=\mathbb{L}^{(k,l)}_{g,g}(M,M)^n+\mathbb{L}^{(k,l)}_{g,g'}(M,\mathfrak{E})^n
+\mathbb{L}^{(l,k)}_{g,g'}(M,\mathfrak{E})^n
+\mathbb{L}^{(k,l)}_{g',g'}(\mathfrak{E},\mathfrak{E})^n.
\end{align*}

\begin{prop}\label{approx}
Under the assumptions of Proposition \ref{synchronization}, 
$\sup_{0\leq t\leq T}\left\|\mathbf{\Xi}[X]^n_t-[X,X]_t-\frac{\psi_1}{2\psi_2k_n^2}[Y,Y]^{n}_t-\mathbf{L}[M]^n_t\right\|=o_p(n^{-1/4})$ 
as $n\to\infty$.
\end{prop}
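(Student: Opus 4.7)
The plan is to expand each of the four bilinear functionals $\Xi^{(k,l)}_{u,v}(U,V)^n$ making up $\mathbf{\Xi}[X]^{n,kl}$ as a double sum over pairs of interval indices $(p,q)$ and to isolate the diagonal contribution $p=q$ from the off-diagonal $p\ne q$: the diagonals will reproduce $[X,X]_t+\frac{\psi_1}{2\psi_2 k_n^2}[Y,Y]^n_t$, while the off-diagonals will be shown to equal $\mathbf{L}[M]^n_t$ up to $o_p(n^{-1/4})$. As a preliminary reduction, I would first replace $X$ by its martingale part $M$: writing $X=X_0+A+M$ and using $|A(I_p)|\le C|I_p|\le C\bar r_n$ with $\bar r_n=n^{-\xi}$ from \ref{hypo:SA1} and \ref{hypo:SA4}, routine bilinear Cauchy--Schwarz bounds combined with \eqref{est.xi} make each $\Xi^{(k,l)}_{u,v}$ contribution involving $A$ uniformly $o_p(n^{-1/4})$.

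For the diagonal $p=q$ analysis I would invoke the Riemann-sum identity $\frac{1}{k_n}\sum_i u^n_{p-i}v^n_{p-i}=\int u v\,dx+O(k_n^{-1})$, valid uniformly over bulk $p$ thanks to the decay in \ref{hypo:W}(ii). The $(g,g)(M,M)$ diagonal collapses to $\sum_p M^k(I_p)M^l(I_p)$, which converges to $[X,X]^{kl}_t$ at rate $o_p(n^{-1/4})$ under \ref{hypo:SA2} and \ref{hypo:SA4}. The two mixed $(g,g')(M,\mathfrak E)$ diagonals vanish at leading order because $\int gg'=\frac{1}{2}[g^2]_{-\infty}^{\infty}=0$ (using \ref{hypo:W}(ii)); the subleading cross-martingale in $M$ and $\mathfrak E$ has conditional variance $O_p(n^{-1})$ from \ref{hypo:SA3}. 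The $(g',g')(\mathfrak E,\mathfrak E)$ diagonal equals $\frac{\psi_1}{\psi_2 k_n^2}\sum_p \epsilon^k_{\tau^k_p}\epsilon^l_{\tau^l_p}+o_p(n^{-1/4})$; on the other hand expanding
\begin{equation*}
[Y,Y]^{n,kl}_t=\sum_p\bigl(\Delta_p X^k+\Delta_p\epsilon^k\bigr)\bigl(\Delta_p X^l+\Delta_p\epsilon^l\bigr)
\end{equation*}
and telescoping in $\epsilon$ shows that the $\epsilon\otimes\epsilon$ part is $2\sum_p\epsilon^k_{\tau^k_p}\epsilon^l_{\tau^l_p}$ plus conditionally-centred cross products $\epsilon^k_{\tau^k_p}\epsilon^l_{\tau^l_{p-1}}$ whose variance bound is $o_p(n^{-1/2})$, while the $X$-containing pieces carry a factor $k_n^{-2}$ and are even smaller. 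This produces the required cancellation with $\frac{\psi_1}{2\psi_2 k_n^2}[Y,Y]^n_t$.

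The off-diagonal contribution is where the martingale approximation appears. For the $p<q$ piece we have
\begin{equation*}
\sum_{p<q}\alpha^n_{u,v,t}(p,q)\,U^k(I_p)V^l(I_q),\qquad \alpha^n_{u,v,t}(p,q)=\frac{1}{\psi_2 k_n}\sum_{i=k_n}^{N^n_t-k_n+1}u^n_{p-i}v^n_{q-i}.
\end{equation*}
First I would extend the inner $i$-range to $i\ge k_n$ with no upper cutoff so as to obtain the time-independent coefficient $c^n_{u,v}(p,q)$; the discrepancy is supported in a thin boundary layer near $q\sim N^n_t$ and its total contribution is controlled by Doob's inequality together with the decay of $u,v$ from \ref{hypo:W}(ii). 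Second, I would truncate the $p$-range to $(q-d_n)\vee k_n\le p\le q-1$; the discarded tail $|p-q|>d_n$ is a double sum whose $\mathcal{F}^{(0)}$-conditional variance is estimated via the Riemann approximation $c^n_{u,v}(p,q)\approx\psi_2^{-1}\phi_{u,v}((q-p)/k_n)$ and the rapid decay in \ref{hypo:W}(ii), which combined with \eqref{est.xi} drives the residual below $o_p(n^{-1/2})$. This identifies the $p<q$ part with $\mathbb{M}^{(k,l)}_{u,v}(U,V)^n_t$; the symmetric $p>q$ part yields $\mathbb{M}^{(l,k)}_{v,u}(V,U)^n_t$; summing the four resulting $\mathbb{L}$'s over $(u,v,U,V)\in\{(g,g,M,M),(g,g',M,\mathfrak E),(g,g',\mathfrak E,M),(g',g',\mathfrak E,\mathfrak E)\}$ assembles $\mathbf{L}[M]^n_t$.

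The main obstacle I anticipate is this off-diagonal truncation, where both the $i$-boundary correction and the lag truncation at $d_n$ must be shown to be $o_p(n^{-1/4})$ \emph{uniformly} in $t$. The essential tools will be Lenglart-type inequalities, which upgrade pointwise $L^2$ bounds on martingale residuals into uniform-in-$t$ bounds; the noise moment condition of \ref{hypo:SA3}; the grid bound \ref{hypo:SA4}; and the strict inequalities \eqref{est.xi} to absorb all polynomial slack coming from the synchronization. Compared with the argument for equidistant observations in \cite{CKP2010}, the new element is that the decay of $\phi_{u,v}$ and its discrete analogue must be extracted from \ref{hypo:W}(ii) rather than from compact support of $g$, so that the proof covers the extended class of weight functions needed in Section \ref{section:optimal}.
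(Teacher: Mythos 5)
Your overall road map is sound and matches the paper's: expand each bilinear functional $\Xi^{(k,l)}_{u,v}(U,V)^n$ as a double sum over interval indices, peel off the edge/boundary terms (the paper's passage to $\widetilde{\Xi}$ in Lemma \ref{Xirep}), split into diagonal and off-diagonal parts, use $\phi_{g,g}(0)=\psi_2$, $\phi_{g',g'}(0)=\psi_1$, and $\phi_{g,g'}(0)=0$ to identify the diagonal contributions with $[X,X]$ and the bias-correction term, and recognize the truncated off-diagonal as $\mathbf{L}[M]^n$. The diagonal analysis you sketch, the $\epsilon\otimes\epsilon$ cancellation against $\frac{\psi_1}{2\psi_2 k_n^2}[Y,Y]^n$, and the boundary-layer / $d_n$-truncation of the off-diagonal are all in the spirit of the paper's Lemma \ref{Xirep} and Eq.\eqref{EEconv}.

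However, your preliminary reduction from $X$ to $M$ by ``routine bilinear Cauchy--Schwarz bounds combined with \eqref{est.xi}'' is a genuine gap, and it is precisely the part of the proof where the time-endogeneity assumption \ref{hypo:A4} is indispensable. The problematic term is the off-diagonal cross piece in which $M$ (or $\mathfrak{E}$) occurs at the earlier index and $A$ at the later one, i.e.\ a sum of the form $\sum_q C^n_{u,v}(V)^k_q\,A^l(I_q)$ with $V\in\{M,\mathfrak{E}\}$. This is \emph{not} conditionally centred: $E[A^l(I_q)\mid\mathcal{F}_{T_{q-1}}]\approx a^l_{T_{q-1}}E[|I_q|\mid\mathcal{F}_{T_{q-1}}]$ is random and can be correlated with the factor $C^n_{u,v}(V)^k_q$ through endogenous sampling. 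A direct Cauchy--Schwarz estimate gives, with $\sum_q E_0[|C^n_q|^2]\asymp n\cdot k_n\bar r_n$ and $\sum_q|A(I_q)|^2\lesssim\bar r_n$, the bound $n^{1/4}\cdot\sqrt{n k_n\bar r_n}\cdot\sqrt{\bar r_n}=O(n^{1-\xi})$, which does \emph{not} vanish for any admissible $\xi\in(0,1)$. The paper instead devotes Lemma \ref{HYlem13} to this term: it first centres $A^l(I_q)$ by substituting $a^l_{T_{q-1}}|I_q|$, then replaces the random duration $|I_q|$ by its conditional mean $G^n_{T_{q-1}}/n$ via \ref{hypo:A4ii} and Lemma \ref{useful}, invokes the uniform approximation $G^n\to G$ and the continuity condition \ref{hypo:A4v} to shift the argument of $F=a^lG$ backwards by $d_n$ steps so that the coefficient becomes $\mathcal{F}_{T_{p-1}}$-measurable, and only then recognises a genuine martingale to which Lemma \ref{useful} applies. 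Without this rearrangement, the drift-containing off-diagonal terms cannot be controlled, so the claim that $X$ can be replaced by $M$ ``routinely'' at the outset is not justified. The same machinery (Lemma \ref{HYlem13}) is also re-used in Lemma \ref{lemremain}(b) with $B^l=\int\sigma^{lj}\,\mathrm{d}s$ in place of $A$, which is another reason the paper isolates it as a standalone lemma rather than absorbing it into a crude norm bound.
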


%%%%%%%%%%%%%%%%%%%%%%%%%%%%%%%%%%%%%%%%%%%%%%%%%%%
%              Negligibility of small jumps
%%%%%%%%%%%%%%%%%%%%%%%%%%%%%%%%%%%%%%%%%%%%%%%%%%%

The above two propositions suggest that it suffices to prove the following stable limit theorem in $\mathbb{D}_T^{d\times d}$:
\begin{equation}\label{CLT:Lprocess}
n^{1/4}\mathbf{L}[M]^n\to^{d_s}\mathcal{W}.
\end{equation}
For the proof we apply Jacod's stable limit theorem, and especially the version from \cite{JP2012} (note that condition \eqref{progressive} ensures that $\mathcal{B}$ is a very good filtered extension of $\mathcal{B}^{(0)}$, i.e.~the variable $Q(\cdot,A)$ is $\mathcal{F}_t^{(0)}$-measurable for all $A\in\mathcal{F}_t$ and all $t\in\mathbb{R}_+$). Set 
\[
\zeta^{(k,l)}_{u,v}(U,V)^n_q=n^{1/4}\{C^n_{u,v}(U)^k_qV^l(I_q)+C^n_{v,u}(V)^l_qU^k(I_q)\}
\]
for $U,V\in\{M,\mathfrak{E}\}$, $u,v\in\{g,g'\}$, $k,l\in\{1,\dots,d\}$ and $q\geq k_n$. Then we define the $\mathbb{R}^d\otimes\mathbb{R}^d$-valued random variable $\zeta^n_q=(\zeta^{n,kl}_q)_{1\leq k,l\leq d}$ by 
 $\zeta^{n,kl}_q=\zeta^{(k,l)}_{g,g}(M,M)^n_q+\zeta^{(k,l)}_{g,g'}(M,\mathfrak{E})^n_q+\zeta^{(k,l)}_{g',g}(\mathfrak{E},M)^n_q+\zeta^{(k,l)}_{g',g'}(\mathfrak{E},\mathfrak{E})^n_q$. Since $n^{1/4}\mathbf{L}[M]^n_t=\sum_{q=k_n}^{N^n_t+1}\zeta^n_q$ and $\zeta^{n}_q$ is $\mathcal{F}_{T_q}$-measurable and satisfies $E[\zeta^{n,kl}_q|\mathcal{F}_{T_{q-1}}]=0$, in the light of Theorem 2.2.15 of \cite{JP2012} it suffices to verify the following conditions:
\begin{align}
&\textstyle\sum_{q=k_n}^{N^n_t+1}E\left[\zeta^{n,kl}_q\zeta^{n,k'l'}_q\big|\mathcal{F}_{T_{q-1}}\right]\to^p\int_0^t\mathfrak{V}_s^{klk'l'}\mathrm{d}s,\label{eq.energy}\\
&\textstyle\sum_{q=k_n}^{N^n_t+1}E\left[\left|\zeta^{n,kl}_q\right|^4\big|\mathcal{F}_{T_{q-1}}\right]\to^p0,\label{eq.Lindeberg}\\
&\textstyle\sum_{q=k_n}^{N^n_t+1}E\left[\zeta^{n,kl}_qW^{j}(I_q)\big|\mathcal{F}_{T_{q-1}}\right]\to^p0,\label{eq.unbias}\\
&\textstyle\sum_{q=k_n}^{N^n_t+1}E\left[\zeta^{n,kl}_qN(I_q)\big|\mathcal{F}_{T_{q-1}}\right]\to^p0\label{eq.orthogonal}
\end{align}
for any $t>0$, $k,l,k,l'\in\{1,\dots,d\}$, $j\in\{1,\dots,d'\}$ and any bounded $\mathbf{F}^{(0)}$-martingale $N$ orthogonal to $W$. Here, $\mathfrak{V}_s^{klk'l'}$ is the integrand in the right hand side of \eqref{avar}. 

Eq.\eqref{eq.energy} follows from the following lemma:
\begin{lem}\label{lemenergy}
Under the assumptions of Proposition \ref{synchronization}, it holds that 
\begin{align*}
&\textstyle\sum_{q=k_n}^{N^n_t+1}E\left[\zeta^{(k,l)}_{g,g}(M,M)^n_q\zeta^{(k',l')}_{g,g}(M,M)^n_q\big|\mathcal{F}_{T_{q-1}}\right]\to^p2\theta\frac{\Phi_{22}}{\psi_2^2}\int_0^t\left\{\Sigma^{kk'}_s\Sigma^{ll'}_s+\Sigma^{kl'}_s\Sigma^{lk'}_s\right\}G_s\mathrm{d}s,\\
&\textstyle\sum_{q=k_n}^{N^n_t+1}E\left[\zeta^{(k,l)}_{g,g'}(M,\mathfrak{E})^n_q\zeta^{(k',l')}_{g,g'}(M,\mathfrak{E})^n_q\big|\mathcal{F}_{T_{q-1}}\right]
\to^p2\frac{\Phi_{12}}{\theta\psi_2^2}\int_0^t\Sigma^{kk'}_s\widetilde{\Upsilon}^{ll'}_s\mathrm{d}s,\\
&\textstyle\sum_{q=k_n}^{N^n_t+1}E\left[\zeta^{(k,l)}_{g',g'}(\mathfrak{E},\mathfrak{E})^n_q\zeta^{(k',l')}_{g',g'}(\mathfrak{E},\mathfrak{E})^n_q\big|\mathcal{F}_{T_{q-1}}\right]
\to^p2\frac{\Phi_{11}}{\theta^3\psi_2^2}\int_0^t\left\{\widetilde{\Upsilon}^{kk'}_s\widetilde{\Upsilon}^{ll'}_s+\widetilde{\Upsilon}^{kl'}_s\widetilde{\Upsilon}^{lk'}_s\right\}\frac{1}{G_s}\mathrm{d}s,\\
&\textstyle\sum_{q=k_n}^{N^n_t+1}E\left[\zeta^{(k,l)}_{g,g}(M,M)^n_q\zeta^{(k',l')}_{g,g'}(M,\mathfrak{E})^n_q\big|\mathcal{F}_{T_{q-1}}\right]\to^p0,\quad
\sum_{q=k_n}^{N^n_t+1}E\left[\zeta^{(k,l)}_{g,g}(M,M)^n_q\zeta^{(k',l')}_{g',g'}(\mathfrak{E},\mathfrak{E})^n_q\big|\mathcal{F}_{T_{q-1}}\right]\to^p0,\\
&\textstyle\sum_{q=k_n}^{N^n_t+1}E\left[\zeta^{(k,l)}_{g,g'}(M,\mathfrak{E})^n_q\zeta^{(k',l')}_{g',g'}(\mathfrak{E},\mathfrak{E})^n_q\big|\mathcal{F}_{T_{q-1}}\right]\to^p0
\end{align*}
as $n\to\infty$ for all $k,l,k',l'$ and all $t\in[0,T]$.
\end{lem}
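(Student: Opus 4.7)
The plan is to expand each $\zeta^{(\cdot,\cdot)}_{u,v}(U,V)^n_q$ into its two defining summands, multiply out the resulting product in $E[\zeta^{n,kl}_q\zeta^{n,k'l'}_q\mid\mathcal F_{T_{q-1}}]$, and pull the $\mathcal F_{T_{q-1}}$-measurable $C^n_{u,v}(U)^\cdot_q$ factors outside each conditional expectation. What is left are conditional second moments of the single-interval variables $U^\cdot(I_q)V^\cdot(I_q)$, which split into three basic types that I treat separately.

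For the three cross-type limits, the non-vanishing contribution always has the form $C^n_\cdot(U)^\cdot_q C^n_\cdot(U')^\cdot_q\cdot E[V^\cdot(I_q)V'^\cdot(I_q)\mid\mathcal F_{T_{q-1}}]$ in which at least one of the inner factors is an $\mathfrak E$ paired with an $M$. Conditioning once more on $\mathcal F^{(0)}_{T_q}$ (which makes the $M$- and $T_q$-dependent piece measurable) reduces the inner expectation to $E[\epsilon^\cdot_{\tau^\cdot_q}\mid\mathcal F^{(0)}]\cdot(\mathcal F^{(0)}\text{-measurable})=0$ by the centering of the noise. For the remaining $C^n(M)C^n(\mathfrak E)E[MM\mid\cdot]$-type contribution, the $\mathcal F^{(0)}$-conditional mean of $C^n(\mathfrak E)$ vanishes for the same reason; a second-moment bound via Lemma \ref{useful} then yields the required convergence in probability.

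For each of the three diagonal statements the strategy is identical; I sketch the $MM$ case. After pulling the $C^n_{g,g}(M)$'s out, replace
\[
E[M^l(I_q)M^{l'}(I_q)\mid\mathcal F_{T_{q-1}}]=\Sigma^{ll'}_{T_{q-1}}E[T_q-T_{q-1}\mid\mathcal F^{(0)}_{T_{q-1}}]+o=\Sigma^{ll'}_{T_{q-1}}G^n_{T_{q-1}}/n+o,
\]
using \ref{hypo:SA2} and \ref{hypo:A4ii}, with the error quantified by \ref{hypo:SA2}, \ref{hypo:A4v} and absorbed into negligible remainders by Lemma \ref{useful}. For $\mathfrak E\mathfrak E$, the analogous substitution is $E[\mathfrak E^l(I_q)\mathfrak E^{l'}(I_q)\mid\mathcal F_{T_{q-1}}]=\widetilde\Upsilon^{ll'}_{T_{q-1}}/k_n^2+o$, valid off the exceptional set $\mathcal N^n$ where exactly one synchronized observation per coordinate falls in $I_q$; the $\mathcal N^n$-contribution is controlled by \ref{hypo:A4i} and the Lyapunov bound \ref{hypo:SA3}. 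Symmetrizing over the two transpositions induced by the definition of $\zeta^{(\cdot,\cdot)}_{u,v}$ then produces the $\Sigma^{kk'}\Sigma^{ll'}+\Sigma^{kl'}\Sigma^{lk'}$ shape (and its $\mathfrak E$-analogues).

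The main obstacle is step three, the asymptotic analysis of the bilinear forms $C^n_{u,v}(U)^k_q C^n_{u',v'}(U)^{k'}_q$. Expanding from the definition of $C^n$, the off-diagonal ($p\ne p'$) part is a martingale-type quantity whose $L^2$-norm vanishes after summation against its coefficients by Lemma \ref{useful}; the diagonal part reduces, upon substituting the conditional second moment of $U^k(I_p)U^{k'}(I_p)$, to $\sum_{p<q}c^n_{u,v}(p,q)c^n_{u',v'}(p,q)\cdot(\text{second-moment factor at }T_{p-1})$. Swapping the order of $p$ and $q$, this becomes a discrete convolution in $(q-p)/k_n$ which I evaluate using the asymptotic identity $c^n_{u,v}(p,q)=\psi_2^{-1}\phi_{u,v}((q-p)/k_n)+o(k_n^{-1})$ uniformly in the relevant range of $(p,q)$, obtained from [W](ii) by comparing the defining Riemann sum with its integral; the mixed case additionally requires the Plancherel-style identity $\int_0^\infty\phi_{g,g'}(y)^2\mathrm dy=\int_0^\infty\phi_{g,g}(y)\phi_{g',g'}(y)\mathrm dy=\Phi_{12}$, which follows since $\widehat{g'}(\xi)=i\xi\widehat g(\xi)$ and $\phi_{g,g'}$ is odd while $\phi_{g,g}$ and $\phi_{g',g'}$ are even. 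A final Riemann-sum approximation based on \ref{hypo:A4iii}, Lemma \ref{HJYlem2.2} and the continuity in \ref{hypo:SA2}, \ref{hypo:SA3}, \ref{hypo:A4v} then converts the outer $q$-sum into an integral against $G_s\,\mathrm ds$, $\mathrm ds$, or $\mathrm ds/G_s$ depending on how many factors of $G^n_{T_{q-1}}/n$ have been consumed, and produces the prefactors $\theta\Phi_{22}/\psi_2^2$, $\Phi_{12}/(\theta\psi_2^2)$, $\Phi_{11}/(\theta^3\psi_2^2)$. The principal subtlety throughout is the simultaneous bookkeeping of the convolutional error, the exchange of summations and the randomness of $T_q$, which is what dictates the interlocking constraints on $\gamma$ in \eqref{est.xi} and the role of the exceptional set $\mathcal N^n$.
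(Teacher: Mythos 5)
Your plan matches the paper's proof in its essential structure: expand each $\zeta^{(k,l)}_{u,v}$ product into $n^{1/2}C^n_{\cdot}(\cdot)^{\cdot}_q C^n_{\cdot}(\cdot)^{\cdot}_q E[V^\cdot(I_q)\check V^\cdot(I_q)\mid\mathcal F_{T_{q-1}}]$-type terms; replace the inner conditional moment by $\Sigma\,G^n/n$ or $\widetilde\Upsilon/k_n^2$ at a suitably lagged time (the paper's shift to $F_{T_{(q-d_n-1)_+}}$, which you subsume under the continuity conditions); expand the resulting $C^n\cdot C^n$ bilinear form, kill the off-diagonal $p\ne p'$ part by a martingale/truncation argument, and evaluate the diagonal via $c^n_{u,v}(p,q)\approx\psi_2^{-1}\phi_{u,v}((q-p)/k_n)$ and a final Riemann sum driven by \ref{hypo:A4iii} and Lemma~\ref{HJYlem2.2}. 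The Plancherel-style fact $\int_0^\infty\phi_{g,g'}^2=\int_0^\infty\phi_{g,g}\phi_{g',g'}=\Phi_{12}$ that you make explicit is indeed what hides inside the paper's generic constant $\int_0^\infty\phi_{u,v}\phi_{\check u,\check v}$, and your four-fold ``transposition'' bookkeeping reproduces the $\Sigma^{kk'}\Sigma^{ll'}+\Sigma^{kl'}\Sigma^{lk'}$ (and $\widetilde\Upsilon$-analogue) shapes correctly.

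The one genuine deviation is how you dispose of the cross-type products whose inner pair is \emph{not} $M\times\mathfrak E$ but whose outer factors mix $C^n(M)$ with $C^n(\mathfrak E)$. The paper feeds these into the very same Case-1/Case-2 machinery and only at the very end invokes the limits $\sum_p F_{T_{p-1}}M^k(I_p)\mathfrak E^{k'}(I_p)\to^p0$ (Eq.~\eqref{MElimit}) to see that they vanish; you instead observe $E_{\mathcal F^{(0)}}[C^n(\mathfrak E)]=0$ and propose to kill the whole $q$-sum by a conditional second-moment bound. That shortcut is valid and somewhat more economical, but two caveats: (i) you state only the $E[MM\mid\cdot]$-inner subcase; the third cross-type ($\zeta_{g,g'}(M,\mathfrak E)\,\zeta_{g',g'}(\mathfrak E,\mathfrak E)$) also produces $C^n(M)C^n(\mathfrak E)E[\mathfrak E\mathfrak E\mid\cdot]$ terms, which need the same argument; and (ii) the bound you want there is not an application of Lemma~\ref{useful} (the sum over $q$ is \emph{not} a martingale-difference array in the $q$-index, since $C^n(\mathfrak E)^{k'}_q$ and $C^n(\mathfrak E)^{k'}_{q+1}$ share noise variables), but rather a direct Chebyshev bound on the $\mathcal F^{(0)}$-conditional variance, using that $E_{\mathcal F^{(0)}}[C^n(\mathfrak E)_qC^n(\mathfrak E)_{q'}]$ has a band structure of width $d_n$. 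With those repairs the estimate comes out $O(n^{1+3\gamma-3\xi})=o(1)$ under \eqref{est.xi}, so your route does close.
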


On the other hand, Eqs.\eqref{eq.Lindeberg}--\eqref{eq.orthogonal} follow from the following lemma. 
\begin{lem}\label{lemremain}
Let $k,l\in\{1,\dots,d\}$, $u,v\in\{g,g'\}$, $U,V\in\{M,\mathfrak{E}\}$ and $t\in[0,T]$. Under the assumptions of Proposition \ref{synchronization}, the following statements hold true:
\begin{enumerate}[noitemsep,label={\normalfont(\alph*)}]

\item $n\sum_{q=k_n}^{N^n_{t}+1}E\left[\left|C^n_{u,v}(U)^k_qV^l(I_q)\right|^4\big|\mathcal{F}_{T_{q-1}}\right]\to^p0$ as $n\to\infty$,

\item $n^{1/4}\sum_{q=k_n}^{N^n_{t}+1}E\left[C^n_{u,v}(U)^k_qV^l(I_q)W^{j}(I_q)\big|\mathcal{F}_{T_{q-1}}\right]\to^p0$ for every $j=1,\dots,d'$,

\item $n^{1/4}\sum_{q=k_n}^{N^n_{t}+1}E\left[C^n_{u,v}(U)^k_qV^l(I_q)N(I_q)\big|\mathcal{F}_{T_{q-1}}\right]\to^p0$ as $n\to\infty$ for any one-dimensional square-integrable martingale $N$ on $\mathcal{B}^{(0)}$ orthogonal to $M$.

\end{enumerate}
\end{lem}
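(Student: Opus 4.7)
The proof proceeds by case analysis on $(U,V)\in\{M,\mathfrak{E}\}^2$. Two structural observations drive everything. First, $C^n_{u,v}(U)^k_q$ is $\mathcal{F}_{T_{q-1}}$-measurable in every case: for $U=M$ this is immediate, while for $U=\mathfrak{E}$, each $\epsilon^k_{\tau^k_p}$ with $p\leq q-1$ is already $\mathcal{F}_{T_{q-1}}$-measurable since $\tau^k_p\leq T_p\leq T_{q-1}$. Consequently the factor $C^n_{u,v}(U)^k_q$ can be pulled out of the inner conditional expectations in (b) and (c). Second, recognizing $c^n_{u,v}(p,q)$ as a Riemann sum for $\psi_2^{-1}\phi_{u,v}((q-p)/k_n)$, condition \ref{hypo:W}(ii) gives the decay bound $|c^n_{u,v}(p,q)|\leq C_r(1+|q-p|/k_n)^{-r}$ for any $r>0$, and in particular $\sum_p c^n_{u,v}(p,q)^2=O(k_n)$ uniformly in $q$.

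For part (a), a direct fourth-moment computation suffices. When $U=M$, a conditional BDG inequality applied to $C^n_{u,v}(M)^k_q=\sum_p c^n_{u,v}(p,q)M^k(I_p)$ (a discrete stochastic integral in $p$), together with $E[|M^k(I_p)|^2|\mathcal{F}_{T_{p-1}}]=O_p(1/n)$ from \ref{hypo:A4ii}--\ref{hypo:A4iii} and the weight decay, yields $E[|C^n_{u,v}(M)^k_q|^4|\mathcal{F}_{T_{(q-d_n)\vee k_n - 1}}]=O_p(n^{-1})$; the same bound holds for $U=\mathfrak{E}$ by conditional independence of noise terms and the moment bound from \ref{hypo:SA3}. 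Coupling this with $E[|V^l(I_q)|^4|\mathcal{F}_{T_{q-1}}]=O_p(n^{-2})$ (from \eqref{SA4} for $V=M$, from \ref{hypo:SA3} for $V=\mathfrak{E}$) and using $N^n_t=O_p(n)$ from Lemma \ref{HJYlem2.2}, the summed fourth-moment multiplied by $n$ tends to zero in probability.

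For parts (b) and (c), three of the four $(U,V)$-subcases vanish on the nose. When $V=\mathfrak{E}$, the identity $\mathfrak{E}^l(I_q)=-\epsilon^l_{\tau^l_q}/k_n$ together with $E[\epsilon^l_{\tau^l_q}|\mathcal{F}^{(0)}]=0$ and the $\mathbf{F}^{(0)}$-adaptedness of $W^j$ and $N$ give, via iterated conditioning, $E[\mathfrak{E}^l(I_q)W^j(I_q)|\mathcal{F}_{T_{q-1}}]=E[\mathfrak{E}^l(I_q)N(I_q)|\mathcal{F}_{T_{q-1}}]=0$. For (c) with $V=M$: the orthogonality of $N$ and $W$ on $\mathcal{B}^{(0)}$ forces $\langle M^l,N\rangle\equiv 0$, so $E[M^l(I_q)N(I_q)|\mathcal{F}_{T_{q-1}}]=0$. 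Each of these subcases contributes exactly zero to the sum.

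The only substantive subcase is (b) with $V=M$. Here $E[M^l(I_q)W^j(I_q)|\mathcal{F}_{T_{q-1}}]=E[\int_{I_q}\sigma^{lj}_s\,ds|\mathcal{F}_{T_{q-1}}]$; by the continuity \ref{hypo:SA2} together with \ref{hypo:A4ii}--\ref{hypo:A4iii}, this is approximated on $\mathbb{Z}_+\setminus\mathcal{N}^n$ by $\sigma^{lj}_{T_{q-1}}G^n_{T_{q-1}}/n$ with a summed approximation error of order $o_p(n^{-1/4})$, while the contribution from $\mathcal{N}^n$ is also $o_p(n^{-1/4})$ since $\#\mathcal{N}^n=O_p(n^\kappa)$ with $\kappa<1/2$. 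It thus remains to show $n^{-3/4}\sum_q C^n_{u,v}(U)^k_q\alpha_q\to^p 0$ with $\alpha_q=\sigma^{lj}_{T_{q-1}}G_{T_{q-1}}$ bounded and $\mathcal{F}_{T_{q-1}}$-measurable. Interchanging the order of summation recasts the left-hand side as $n^{-3/4}\sum_p U^k(I_p)B_p$ with $B_p=\sum_q c^n_{u,v}(p,q)\alpha_q$; using the continuity of $\sigma$ and $G$ from \ref{hypo:SA2} and \ref{hypo:A4v} together with the weight decay to freeze $\alpha_q\approx\alpha_p$, one replaces $B_p$ by an $\mathcal{F}_{T_{p-1}}$-measurable approximation, turning the result into a martingale sum whose $L^2$ norm is $O(1)$ by orthogonality of the $U^k(I_p)$; the replacement error is controlled by the continuity hypotheses and vanishes at the required rate. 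The main obstacle is precisely this last step: the $\mathcal{F}_{T_{q-1}}$-measurability of $\alpha_q$ blocks a direct application of Lemma \ref{useful}, and one must combine the refined decay of $c^n_{u,v}$ with the $L^2$-continuity of $\sigma$ and of the limiting duration process $G$ to legitimately reduce the sum to a martingale difference sum.
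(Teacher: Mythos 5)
Parts (a) and (c), and the reduction of (b) to the subcase $V=M$, follow essentially the same lines as the paper, and your observations that $C^n_{u,v}(U)^k_q$ is $\mathcal F_{T_{q-1}}$-measurable and that the cases $V=\mathfrak E$ and $N\perp W$ contribute exactly zero are all correct (the paper handles (c) in one line, precisely for your reason). In (a) your exponents are off ($E[|M^l(I_q)|^4\,|\,\mathcal F_{T_{q-1}}]\lesssim\bar r_n^2=n^{-2\xi}$, not $O_p(n^{-2})$, and $E[|C^n_{u,v}(M)^k_q|^4]\lesssim(d_n\bar r_n)^2$, not $O_p(n^{-1})$), but the conclusion under \eqref{est.xi} is unaffected.

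The genuine issue is in (b) with $V=M$, where you reconstruct Lemma~\ref{HYlem13} inline but in the wrong order. After approximating $E[[M^l,W^j](I_q)\,|\,\mathcal F_{T_{q-1}}]$ by $\alpha_q/n$ with $\alpha_q=\sigma^{lj}_{T_{q-1}}G^n_{T_{q-1}}$, you \emph{first} interchange $\sum_q\sum_p$ to obtain $n^{-3/4}\sum_p U^k(I_p)B_p$ and only \emph{then} attempt to replace $B_p=\sum_q c^n_{u,v}(p,q)\alpha_q$ by the $\mathcal F_{T_{p-1}}$-measurable $B_p'=\alpha_p\sum_q c^n_{u,v}(p,q)$. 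The replacement error $n^{-3/4}\sum_p U^k(I_p)(B_p-B_p')$ has no martingale structure to exploit, since $\alpha_q-\alpha_p$ for $q>p$ is not $\mathcal F_{T_{p-1}}$-measurable; the best available Schwarz estimate gives
\[
n^{-3/4}\Bigl(\sum_p|U^k(I_p)|^2\Bigr)^{1/2}\Bigl(\sum_p|B_p-B_p'|^2\Bigr)^{1/2}\lesssim n^{1/4}(k_n\bar r_n)^{\varpi/2}=n^{1/4+(\varpi/2)(1/2-\xi)},
\]
which vanishes only if $\xi>\tfrac12+\tfrac{1}{2\varpi}$. This is \emph{strictly stronger} than the bound $\xi>\tfrac{4+\varpi}{4+2\varpi}+\gamma$ imposed by \eqref{est.xi}, and fails for $\varpi<2$. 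The paper avoids this by performing the time shift $F_{T_{q-1}}\mapsto F_{T_{(q-d_n-1)_+}}$ in step \eqref{contG} \emph{before} interchanging; the error there is controlled via a term-by-term Schwarz using $E[|C^n_{u,v}(V)^k_q|^2\,|\,\mathcal F_{T_{(q-d_n-1)_+}}]\lesssim d_n\bar r_n$ from \eqref{maest}, an estimate which exploits orthogonality of the $U(I_p)$'s inside $C^n$ and saves a full factor of $n^{1/4}$ relative to your route. Only after this shift is the sum interchanged, making $H^p=n^{-3/4}\sum_{q=p+1}^{p+d_n}c^n_{u,v}(p,q)F_{T_{(q-d_n-1)_+}}$ automatically $\mathcal F_{T_{p-1}}$-measurable. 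Your closing claim that the martingale sum has $L^2$ norm ``$O(1)$'' would in any case not suffice (one needs $o(1)$), but the actual obstruction is the replacement error, which you flag as ``the main obstacle'' and leave unresolved. The clean route, used by the paper, is to note $E[M^l(I_q)W^j(I_q)\,|\,\mathcal F_{T_{q-1}}]=E[[M^l,W^j](I_q)\,|\,\mathcal F_{T_{q-1}}]$, remove the conditional expectation by Lemma~\ref{useful}, identify the result as $n^{1/4}\mathbb M^{(k,l)}_{u,v}(U,B)_t$ with $B^l_t=\int_0^t\sigma^{lj}_s\,\mathrm ds$, and cite Lemma~\ref{HYlem13} directly.
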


\subsubsection{Proof of Proposition \ref{synchronization}}%\label{prooflembasic}

Throughout the discussions, for (random) sequences $(x_n)$ and $(y_n)$, $x_n\lesssim y_n$ means that there exists a (non-random) constant $K\in[0,\infty)$ such that $x_n\leq Ky_n$ for large $n$. Also, we denote by $E_0$ the conditional expectation given $\mathcal{F}^{(0)}$, i.e.~$E_0[\cdot]:=E[\cdot|\mathcal{F}^{(0)}]$. Moreover, for each $\delta\in(0,T)$, we set $\beta_\delta=\sup_{0\leq h\leq \delta}\left(\|X_h-X_0\|+\|X_T-X_{T-h}\|\right)$. Eqs.(2.1.33)--(2.1.34) from \cite{JP2012} and \ref{hypo:SA1}--\ref{hypo:SA2} imply that, for any $r\geq1$, there is a constant $K_r$ such that
\begin{equation}\label{estimate:beta}
E\left[\left(\beta_\delta\right)^r\right]\leq K_r\delta^{r/2}
\end{equation}
for any $\delta\in(0,T)$. 
 
\begin{lem}\label{lemma:estimate}
Under the assumptions of Proposition \ref{synchronization}, the following statements hold true:
\begin{enumerate}[label={\normalfont(\alph*)}]

\item For any $r\in[0,\Gamma]$, there is a constant $K_r>0$ such that
\begin{align}
&E\left[\left\|\widetilde{X}_{i,T}\right\|^r+\left\|\breve{X}(g)_i\right\|^r\big|\mathcal{F}^{(0)}_{T_{(i-d_n+1)_+}}\right]
\leq K_r\left\{E\left[\left(\beta_{(k_n+1)\bar{r}_n}\right)^r\big|\mathcal{F}^{(0)}_{T_{(i-d_n+1)_+}}\right]+\left(k_n\bar{r}_n\right)^{r/2}\right\},\label{eq:avg1}\\
&E_0\left[\left\|\widetilde{\epsilon}_{i,T}\right\|^r+\|\breve{\mathfrak{E}}(g')_i\|^r\right]
\leq K_r k_n^{-r/2}\label{eq:avg2}
\end{align}
for every $i$.

\item There is a constant $K>0$ such that
\begin{align}
&E\left[\left\|\widetilde{X}_{i,T}-\breve{X}(g)_i\right\|^2\big|\mathcal{F}^{(0)}_{T_{(i-d_n+1)_+}}\right]
\leq K\left\{k_n^{-1}\bar{r}_n+\left(|g^n_{N^n_T-k_n+1-i}|^2+|g^n_{k_n-i}|^2\right)E\left[\left(\beta_{(k_n+1)\bar{r}_n}\right)^2\big|\mathcal{F}^{(0)}_{T_{(i-d_n+1)_+}}\right]\right\}\label{eq:syn1}
\end{align}
for every $i$.

\end{enumerate}
\end{lem}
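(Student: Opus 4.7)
My plan is to use the standard semimartingale decomposition $X = X_0 + A + M$, with $A_t = \int_0^t a_s\,ds$ and $M_t = \int_0^t \sigma_s\,dW_s$, combined with summation-by-parts and the BDG inequality applied to the resulting stochastic integrals. The auxiliary fact I will use throughout is the Riemann-sum bound $\sum_p |g^n_{p-i}|^s \lesssim k_n \int|g|^s$, uniform in $i$, which follows from the rapid decay in \ref{hypo:W}(ii).

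For \eqref{eq:avg1} applied to $\breve{X}(g)^k_i$: the drift contribution is bounded pointwise by $\|a\|_\infty\bar{r}_n\sum_p|g^n_{p-i}| = O(k_n\bar{r}_n)$, which (since $k_n\bar{r}_n\to 0$) is absorbed into $(k_n\bar{r}_n)^{r/2}$. The martingale contribution equals the stochastic integral $\int h^i_s(\sigma dW)^k_s$ with $h^i_s = \sum_p g^n_{p-i}1_{I_p}(s)$, and BDG applied conditionally on $\mathcal{F}^{(0)}_{T_{(i-d_n+1)_+}}$ gives $\lesssim \|\sigma\|_\infty^r \bigl(\bar{r}_n\sum_p(g^n_{p-i})^2\bigr)^{r/2}\lesssim (k_n\bar{r}_n)^{r/2}$. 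For $\widetilde{X}^k_{i,T}$, the interior terms ($k_n+1\leq p\leq N^n_T-k_n$) are treated identically upon replacing $T_p$ by $\tau^k_p\in I_p$; the two boundary terms at $p=k_n$ and $p=N^n_T-k_n+1$ involve $X^k_{\tau^k_{k_n}}-\mathring{X}^k_0$ and $\mathring{X}^k_T-X^k_{\tau^k_{N^n_T-k_n}}$, and since $\mathring{X}^k_0$ averages $X^k$-values over $[0,T_{k_n-1}]\subseteq [0,(k_n+1)\bar{r}_n]$ (and symmetrically at $T$), both are bounded in $L^r$ by $\beta_{(k_n+1)\bar{r}_n}$, yielding the first summand of \eqref{eq:avg1}. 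For \eqref{eq:avg2}, I will use summation by parts to rewrite both $\widetilde{\epsilon}^k_{i,T}$ and $\breve{\mathfrak{E}}(g')^k_i$ as weighted sums of the $\mathcal{F}^{(0)}$-conditionally independent noise variables $\epsilon^k_{\tau^k_p}$, with weights of order $k_n^{-1}$ whose $\ell^2$-norm is of order $k_n^{-1/2}$; Burkholder--Rosenthal, available for $r\leq\Gamma$ by the moment bound in \ref{hypo:SA3}, then delivers the $k_n^{-r/2}$ bound.

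For part (b), I will set $Y^k_p := X^k_{\tau^k_p}-X^k_{T_p}$, observe that the interior per-term difference equals $\widetilde{\Delta}_{\tau^k_p}X^k - X^k(I_p) = Y^k_p - Y^k_{p-1}$, and apply summation by parts:
\begin{equation*}
\sum_{p} g^n_{p-i}(Y^k_p - Y^k_{p-1}) = -\sum_{p}(g^n_{p+1-i} - g^n_{p-i})\,Y^k_p + \text{(boundary terms at $p=k_n, N^n_T-k_n$)}.
\end{equation*}
The weight increments satisfy $|g^n_{p+1-i}-g^n_{p-i}|\lesssim k_n^{-1}$ by the piecewise Lipschitz property of $g$, and each $Y^k_p$ is the integral of $a_s\,ds + \sigma_s\,dW_s$ over an interval of length $\leq \bar{r}_n$, giving $E[(Y^k_p)^2|\mathcal{F}^{(0)}_{T_{p-1}}]\lesssim\bar{r}_n$. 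A final BDG application to the telescoped stochastic integral produces the main $k_n^{-1}\bar{r}_n$ term of \eqref{eq:syn1}, while the remaining boundary contributions (the extra $p=N^n_T-k_n+1$ summand of $\widetilde{X}_{i,T}$ and the jittered endpoints) produce exactly the $(|g^n_{k_n-i}|^2 + |g^n_{N^n_T-k_n+1-i}|^2)$-weighted $\beta_{(k_n+1)\bar{r}_n}^2$ correction. The main technical obstacle will be the simultaneous bookkeeping of the conditioning $\sigma$-field, the endpoint jittering, and the $i$-dependence of the effective support of the weights $g^n_{\cdot-i}$; everything hinges on the Riemann-sum bound being truly uniform in $i$ and on setting up the stochastic-integral representations so that BDG applies to a bona fide $\mathbf{F}^{(0)}$-martingale regardless of the position of $i$ in $\{k_n,\dots,N^n_T-k_n+1\}$.
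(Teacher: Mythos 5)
Your proposal is correct and follows essentially the same route as the paper: decompose $X$ into drift and martingale parts, apply the BDG inequality to the martingale contribution of $\breve{X}(g)_i$ and of the interior part of $\widetilde{X}_{i,T}$, bound the two jittered endpoint terms by $\beta_{(k_n+1)\bar{r}_n}$, use summation by parts for the noise terms in \eqref{eq:avg2}, and for part (b) sum by parts on the per-term difference $X^k_{\tau^k_p}-X^k_{T_p}$ and invoke the Lipschitz continuity of $g$. The only cosmetic difference is that you phrase the BDG step through continuous-time stochastic integrals and mention Burkholder--Rosenthal for the noise, whereas the paper works directly with the discrete partial-sum martingale $m\mapsto\sum_{p\le m} g^n_{p-i}X^k(I^k_p)$ and uses plain BDG; these are interchangeable here.
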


\begin{proof}
(a) First, by \eqref{SA4} we have $|\widetilde{\Delta}_{\tau^k_{k_n}}X^k|+|\widetilde{\Delta}_{\tau^l_{N^n_T-k_n+1}}X^k|\leq2\beta_{(k_n+1)\bar{r}_n}$. Moreover, the Burkholder-Davis-Gundy (henceforth BDG) inequality, \eqref{SA4} and [W] yield
\begin{align*}
E\left[\left|\sum_{p=k_n+1}^{N^n_T-k_n}g^n_{p-i}X^k(I^k_p)\right|^r\big|\mathcal{F}_{T_{(i-d_n+1)_+}}\right]
\leq E\left[\max_{m\in\mathbb{Z}_+}\left|\sum_{p=k_n+1}^{m}g^n_{p-i}X^k(I^k_p)\right|^r\big|\mathcal{F}_{T_{(i-d_n+1)_+}}\right]
\lesssim\left(k_n\bar{r}_n\right)^{r/2}.
\end{align*}
This inequality also holds true when we replace $X^k(I^k_p)$ with $X^k(I_p)$, hence we obtain \eqref{eq:avg1}. 

Next, summation by parts yields
\begin{equation}\label{eq:epsilon}
\widetilde{\epsilon}^k_{i,T}
=-\sum_{p=k_n}^{N^n_T-k_n}\Delta(g)^n_{p-i}\epsilon^k_{\tau^k_p}
+g^n_{N^n_T-k_n+1-i}\mathring{\epsilon}^k_T
-g^n_{k_n-i}\mathring{\epsilon}^k_0,
\end{equation}
hence the equation $\Delta(g)^n_{p-i}=\int_{(p-i)/k_n}^{(p-i+1)/k_n}g'(x)\mathrm{d}x$, [W] and the BDG inequality yield $E_0\left[\left\|\widetilde{\epsilon}_{i,T}\right\|^r\right]\lesssim k_n^{-r/2}$. On the other hand, since $\mathfrak{E}^k(I_p)=-k_n^{-1}\sum_q\epsilon^k_{\tau^k_q}1_{\{T_{p-1}<\tau^k_q\leq T_p\}}=-k_n^{-1}\epsilon^k_{\tau^k_p}$, the BDG inequality again yields $E_0\left[\left\|\breve{\mathfrak{E}}(g')_i\right\|^r\right]\lesssim k_n^{-r/2}$, hence we obtain \eqref{eq:avg2}. 

(b) Summation by parts yields
\begin{align*}
&\widetilde{X}_{i,T}-\breve{X}(g)_i\\
=&-\sum_{p=k_n}^{N^n_T-k_n}\Delta(g)^n_{p-i}\left(X^k_{\tau^k_p}-X^k_{T_p}\right)
+g^n_{N^n_T-k_n+1-i}\left(\mathring{X}^k_{T}-X^k_{T_{N^n_T-k_n}}\right)
-g^n_{k_n-i}\left(\mathring{X}^k_0-X^k_{T_{k_n-1}}\right),
\end{align*}
hence \eqref{eq:syn1} can be shown in a similar manner to the proof of \eqref{eq:avg1} using the Lipschitz continuity of $g$. 
\end{proof}

\begin{proof}[\upshape{\bfseries{Proof of Proposition \ref{synchronization}}}]
Fix $\alpha>0$, and define the $\mathbf{F}^{(0)}$-stopping time $R^n_\alpha$ by
\begin{equation}\label{def:Rn}
R^n_{\alpha}=\inf\{t:n^{-1}N^n_t>\alpha\}.
\end{equation}
Since $\Delta N^n_t\leq 1$ for every $t$, it holds that
\begin{equation}\label{SC3}
N^n_{t\wedge R^n_\alpha}\leq\alpha n+1
\end{equation}
for all $t\geq0$. Moreover, by Lemma \ref{HJYlem2.2} we also have
\begin{equation}\label{alpha.infinity}
\limsup_{\alpha\to\infty}\limsup_{n\to\infty}P\left(R^n_\alpha\leq T\right)=0.
\end{equation}
In particular, by the Markov inequality and \eqref{alpha.infinity} it is enough to prove 
\[
E\left[\sup_{0\leq t\leq T\wedge R^n_\alpha}\left|\widetilde{\MRC}[Y]^{n,kl}_t-\mathbf{\Xi}[X]^{n,kl}_t+\frac{\psi_1}{\psi_2k_n^2}[Y,Y]^{n,kl}_t\right|\right]=o(n^{-1/4})\qquad\text{ for any }\alpha>0.
\] 
In view of Lemma \ref{lemma:estimate}, for this it suffices to show the following equations for any $k,l=1,\dots,d$ and any $\alpha>0$:
\begin{align}
&\textstyle\sup_{0\leq t\leq T\wedge R^n_\alpha}\left|\frac{1}{k_n}\sum_{i=k_n}^{N^n_t-k_n+1}\left\{\widetilde{\epsilon}^k_{i,T}-\breve{\mathfrak{E}}(g')^k_i\right\}\widetilde{X}^l_{i,T}\right|=o_p(n^{-1/4}),\label{revise3}\\
&\textstyle\sup_{0\leq t\leq T\wedge R^n_\alpha}\left|\frac{1}{k_n}\sum_{i=k_n}^{N^n_t-k_n+1}\left\{\widetilde{\epsilon}^k_{i,T}-\breve{\mathfrak{E}}(g')^k_i\right\}\breve{X}(g)^l_i\right|=o_p(n^{-1/4}),\label{revise4}\\
&\textstyle\sup_{0\leq t\leq T\wedge R^n_\alpha}\left|\frac{1}{k_n}\sum_{i=k_n}^{N^n_t-k_n+1}\left\{\widetilde{\epsilon}^k_{i,T}-\breve{\mathfrak{E}}(g')^k_i\right\}\widetilde{\epsilon}^l_{i,T}\right|=o_p(n^{-1/4}),\label{revise5}\\
&\textstyle\sup_{0\leq t\leq T\wedge R^n_\alpha}\left|\frac{1}{k_n}\sum_{i=k_n}^{N^n_t-k_n+1}\left\{\widetilde{\epsilon}^k_{i,T}-\breve{\mathfrak{E}}(g')^k_i\right\}\breve{\mathfrak{E}}(g')^l_i\right|=o_p(n^{-1/4}).\label{revise6}
\end{align}
Since \eqref{revise4} (resp.~\eqref{revise6}) can be shown in a similar manner to \eqref{revise3} (resp.~\eqref{revise5}), we only prove \eqref{revise3} and \eqref{revise5}.

First, thanks to \ref{hypo:W}(i), there are points $-\infty=:x_0<x_1<\cdots<x_\Lambda<x_{\Lambda+1}:=\infty$ such that $g$ is of $C^1$ and $g'$ is Lipschitz continuous on $(x_\lambda,x_{\lambda+1})$ for every $\lambda=0,1\,\dots,\Lambda$. We denote by $\mathcal{P}_n$ the set of all integers $p$ such that $x_\lambda\in[p/k_n,(p+1)/k_n]$ for some $\lambda\in\{1,\dots,\Lambda\}$. We evidently have $\#\mathcal{P}_n\leq2\Lambda$. Also, let us set $\Delta^2(g)^n_p=k_n\Delta(g)^n_p-(g')^n_p$. Then the following claims hold true: (I) $\sup_p|\Delta^2(g)^n_p|<\infty$, (II) $\sup_{p\notin\mathcal{P}_n}|k_n\Delta^2(g)^n_p|<\infty$ and (III) $\sup_{p:|p|>d_n}|n^K\Delta^2(g)^n_p|<\infty$ for any $K>0$. 
In fact, (I) is a consequence of the Lipschitz continuity of $g$ and the boundedness of $g'$. (II) follows from the identity $\Delta^2(g)^n_p=k_n\int_{p/k_n}^{(p+1)/k_n}\left\{g'(x)-g'(p/k_n)\right\}\mathrm{d}x$ and the fact that $g'$ is Lipschitz continuous on $(x_{\lambda},x_{\lambda+1})$ for every $\lambda$. (III) is a consequence of \ref{hypo:W}(ii).

Now, (II) and (III) imply that there is a constant $C>0$ such that
\begin{align*}
E_0\left[\left|\frac{1}{k_n}\sum_{\begin{subarray}{c}
p=k_n\\
p-i\notin\mathcal{P}_n
\end{subarray}}^{N^n_T-k_n}\Delta^2(g)^n_{p-i}\epsilon^k_{\tau^k_p}\right|^2\right]
\leq Ck_n^{-4}d_n
\end{align*}
for every $i$. Therefore, noting the identity
\begin{align*}
\widetilde{\epsilon}^k_{i,T}-\breve{\mathfrak{E}}(g')_i
=-\frac{1}{k_n}\sum_{p=k_n}^{N^n_T-k_n}\Delta^2(g)^n_{p-i}\epsilon^k_{\tau^k_p}
+g^n_{N^n_T-k_n+1-i}\mathring{\epsilon}^k_T
-g^n_{k_n-i}\mathring{\epsilon}^k_0,
\end{align*}
which follows from \eqref{eq:epsilon} and the definition of $\breve{\mathfrak{E}}(g')_i$, \eqref{revise3} and \eqref{revise5} follow once we show that
\begin{align}
&\sup_{0\leq t\leq T\wedge R^n_\alpha}\left|\frac{1}{k_n}\sum_{i=k_n}^{N^n_t-k_n+1}\left\{\frac{1}{k_n}\sum_{\begin{subarray}{c}
p=k_n\\
p-i\in\mathcal{P}_n
\end{subarray}}^{N^n_T-k_n}\Delta^2(g)^n_{p-i}\epsilon^k_{\tau^k_p}\right\}\widetilde{X}^l_{i,T}\right|=o_p(n^{-1/4}),\label{revise3-2}\\
&\sup_{0\leq t\leq T\wedge R^n_\alpha}\left|\frac{1}{k_n}\sum_{i=k_n}^{N^n_t-k_n+1}\left\{\frac{1}{k_n}\sum_{\begin{subarray}{c}
p=k_n\\
p-i\in\mathcal{P}_n
\end{subarray}}^{N^n_T-k_n}\Delta^2(g)^n_{p-i}\epsilon^k_{\tau^k_p}\right\}\left\{\sum_{q=k_n}^{N^n_T-k_n}\Delta(g)^n_{q-i}\epsilon^l_{\tau^l_q}
\right\}\right|=o_p(n^{-1/4}).\label{revise5-2}
\end{align}

First we prove \eqref{revise3-2}. Since we have
\begin{align*}
\frac{1}{k_n}\sum_{i=k_n}^{N^n_t-k_n+1}\left\{\frac{1}{k_n}\sum_{\begin{subarray}{c}
p=k_n\\
p-i\in\mathcal{P}_n
\end{subarray}}^{N^n_T-k_n}\Delta^2(g)^n_{p-i}\epsilon^k_{\tau^k_p}\right\}\widetilde{X}^l_{i,T}
=\frac{1}{k_n^2}\sum_{\begin{subarray}{c}
p=-N^n_t+2k_n-1\\
p\in\mathcal{P}_n
\end{subarray}}^{N^n_T-2k_n}\Delta^2(g)^n_{p}\sum_{i=(k_n-p)\vee k_n}^{\nu_n(t,p)}\epsilon^k_{\tau^k_{i+p}}\widetilde{X}^l_{i,T},
\end{align*}
where $\nu_n(t,p)=(N^n_T-k_n-p)\wedge(N^n_t-k_n+1)$, the Davis inequality and (I) imply that
\begin{align*}
&E_0\left[\sup_{0\leq t\leq T\wedge R^n_\alpha}\left|\frac{1}{k_n}\sum_{i=k_n}^{N^n_t-k_n+1}\left\{\frac{1}{k_n}\sum_{\begin{subarray}{c}
p=k_n\\
p-i\in\mathcal{P}_n
\end{subarray}}^{N^n_T-k_n}\Delta^2(g)^n_{p-i}\epsilon^k_{\tau^k_p}\right\}\widetilde{X}^l_{i,T}\right|\right]
\lesssim \frac{2\Lambda}{k_n^2}\sqrt{\sum_{i=k_n}^{N^n_{T\wedge R^n_\alpha}-k_n+1}\left|\widetilde{X}^l_{i,T}\right|^2}.
\end{align*}
Hence \eqref{revise3-2} holds true by Lemma 6.6 and \eqref{SC3}. 

Next we prove \eqref{revise5-2}. We decompose the target quantity as
\begin{align*}
&\frac{1}{k_n}\sum_{i=k_n}^{N^n_t-k_n+1}\left\{\frac{1}{k_n}\sum_{\begin{subarray}{c}
p=k_n\\
p-i\in\mathcal{P}_n
\end{subarray}}^{N^n_T-k_n}\Delta^2(g)^n_{p-i}\epsilon^k_{\tau^k_p}\right\}\left\{\sum_{q=k_n}^{N^n_T-k_n}\Delta(g)^n_{q-i}\epsilon^l_{\tau^l_q}\right\}\\
&=\frac{1}{k_n^2}\sum_{\begin{subarray}{c}
p=-N^n_t+2k_n-1\\
p\in\mathcal{P}_n
\end{subarray}}^{N^n_T-2k_n}\Delta^2(g)^n_{p}\sum_{i=(k_n-p)\vee k_n}^{\nu_n(t,p)}\epsilon^k_{\tau^k_{i+p}}\left\{\sum_{q=k_n}^{i+p-1}\Delta(g)^n_{q-i}\epsilon^l_{\tau^l_q}+\Delta(g)^n_{p}\epsilon^l_{\tau^l_{i+p}}+\sum_{q=i+p+1}^{N^n_T-k_n}\Delta(g)^n_{q-i}\epsilon^l_{\tau^l_q}\right\}\\
&=:\mathbb{I}_t+\mathbb{II}_t+\mathbb{III}_t.
\end{align*}
We can prove $\sup_{0\leq t\leq T\wedge R^n_\alpha}|\mathbb{I}_t|=o_p(n^{-1/4})$ similarly to the proof of \eqref{revise3-2}, while it can easily be seen $\sup_{0\leq t\leq T\wedge R^n_\alpha}|\mathbb{II}_t|=O_p(k_n^{-3}n)=o_p(n^{-1/4})$. Now we prove $\sup_{0\leq t\leq T\wedge R^n_\alpha}|\mathbb{III}_t|=o_p(n^{-1/4})$. For this it suffices to show that (i) the process $(n^{1/4}\mathbb{III}_t)_{t\in[0,T]}$ is C-tight, and (ii) $\mathbb{III}_t=o_p(n^{-1/4})$ for every $t\in[0,T]$.

We begin with proving (i). For $0\leq s\leq t\leq T$, the Schwarz inequality yields 
\begin{align}
|\mathbb{III}_t-\mathbb{III}_s|
%&\leq\frac{1}{k_n^2}\sum_{p\in\mathcal{P}_n}|\Delta^2(g)^n_{p}|\left|\sum_{i=\nu_n(s,p)\vee(k_n-p)\vee k_n+1}^{\nu_n(t,p)}\epsilon^k_{\tau^k_{i+p}}\sum_{q=i+p+1}^{N^n_T-k_n}\Delta(g)^n_{q-i}\epsilon^l_{\tau^l_q}\right|\nonumber\\
&\leq\frac{1}{k_n^2}\sum_{p\in\mathcal{P}_n}|\Delta^2(g)^n_{p}|\sqrt{N^n_t-N^n_s}\left\{\sum_{i=k_n+1}^{\nu_n(T,p)}\left|\epsilon^k_{\tau^k_{i+p}}\sum_{q=i+p+1}^{N^n_T-k_n}\Delta(g)^n_{q-i}\epsilon^l_{\tau^l_q}\right|^2\right\}^{1/2}.\label{c-tight}
\end{align}
In particular, since $\mathbb{III}_0=0$, noting the identity $\Delta(g)^n_{q-i}=\int_{(q-i)/k_n}^{(q-i+1)/k_n}g'(x)\mathrm{d}x$, we obtain
\begin{equation}\label{c-tight1}
E_0\left[\sup_{0\leq t\leq T}|n^{1/4}\mathbb{III}_t|\right]
\lesssim \frac{n^{1/4}}{k_n^2}\cdot2\Lambda\cdot\frac{N^n_T}{\sqrt{k_n}}
\lesssim2\Lambda\frac{N^n_T}{n}.
\end{equation}
On the other hand, setting $w_T(f,\delta)=\sup\{|f(t)-f(s)|:t,s\in[0,T],|t-s|\leq\delta\}$ for a function $f:[0,T]\to\mathbb{R}$ and a number $\delta>0$, \eqref{c-tight} yields
\begin{align}
E_0[w_T(n^{1/4}\mathbb{III}_t,\delta)]
%&\lesssim n^{1/4}\cdot k_n^{-2}\cdot 2\Lambda\cdot\sqrt{w_T(N^n,\delta)}\left\{N^n_T/k_n\right\}^{1/2}%\nonumber\\
\lesssim 2\Lambda\sqrt{w_T(n^{-1}N^n,\delta)}\left\{n^{-1}N^n_T\right\}^{1/2}.\label{c-tight2}
\end{align}
Since the process $n^{-1}N^n$ is C-tight by Lemma \ref{HJYlem2.2} and Theorem VI-3.37 of \cite{JS}, claim (i) follows from \eqref{c-tight1}--\eqref{c-tight2} and Proposition VI-3.26 of \cite{JS}.

Next, in order to prove (ii), we rewrite $\mathbb{III}_t$ as
\begin{align*}
\mathbb{III}_t
=\frac{1}{k_n^2}\sum_{\begin{subarray}{c}
p=-N^n_t+2k_n-1\\
p\in\mathcal{P}_n
\end{subarray}}^{N^n_T-2k_n}\Delta^2(g)^n_{p}\sum_{q=(k_n-p)\vee k_n+p+1}^{N^n_T-k_n}\epsilon^l_{\tau^l_q}\sum_{i=(k_n-p)\vee k_n}^{\nu_n(t,p)\wedge(q-p-1)}\Delta(g)^n_{q-i}\epsilon^k_{\tau^k_{i+p}}.
\end{align*}
Then the Davis inequality yields $E_0[|\mathbb{III}_t|]
%&\leq\frac{1}{k_n^2}\sum_{p\in\mathcal{P}_n}|\Delta^2(g)^n_{p}|E_0\left[\left|\sum_{q=(k_n-p)\vee k_n+p+1}^{N^n_T-k_n}\epsilon^l_{\tau^l_q}\sum_{i=(k_n-p)\vee k_n}^{\nu_n(t,p)\wedge(q-p-1)}\Delta(g)^n_{q-i}\epsilon^k_{\tau^k_{i+p}}\right|\right]\\
\lesssim k_n^{-2}\cdot2\Lambda\sqrt{N^n_T/k_n}
=o_p(n^{-1/4}),$ 
which implies that claim (ii) holds true. Consequently, we obtain \eqref{revise5-2} and the proof of the proposition is completed.
\end{proof}

\subsubsection{Proof of Proposition \ref{approx}}\label{sec.app.cont}

\if0
First we note that
\begin{equation}\label{modulus.M}
\sup\left\{\|M_s-M_r\|:|s-r|\leq h,s,r\in[0,t]\right\}=O_p\left(\sqrt{h}|\log h|\right)
\end{equation}
as $h\downarrow0$ for any $t>0$ due to a representation of a continuous local martingale with Brownian motion and L\'{e}vy's theorem on the uniform modulus of continuity of Brownian motion. 
\fi

Next we prove some auxiliary results.
\begin{lem}\label{sup.epsilon}
Under \ref{hypo:SA3}, $\sup_{0\leq p\leq N^n_t+1}|\epsilon^k_{\tau^k_p}|=o_p(n^{1/4})$ for any $t>0$ and any $k=1,\dots,d$.
\end{lem}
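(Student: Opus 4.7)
The plan is to combine the conditional $\Gamma$-th moment bound from \ref{hypo:SA3} with a standard union bound argument, using the tightness of $N^n_t/n$ provided by Lemma \ref{HJYlem2.2} to control the number of terms over which we take the maximum. Recall that $(\epsilon_{\tau^k_p})_{p\geq 0}$ is, conditionally on $\mathcal{F}^{(0)}$, serially independent with $\int\|z\|^\Gamma Q_s(\mathrm{d}z)$ bounded by some deterministic constant $C$ under \ref{hypo:SA3}.

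Fix $t>0$, $\eta>0$ and $\alpha>0$, and recall the $\mathbf{F}^{(0)}$-stopping time $R^n_\alpha$ from \eqref{def:Rn} satisfying $N^n_{t\wedge R^n_\alpha}\leq \alpha n+1$ by \eqref{SC3}. The first step is to restrict attention to the event $\{R^n_\alpha>t\}$, where the number of indices is at most deterministic order $n$. On this event, the conditional Markov inequality applied under $E_0[\cdot]=E[\cdot|\mathcal{F}^{(0)}]$ and the conditional union bound give
\begin{align*}
P\left(\sup_{0\leq p\leq N^n_t+1}|\epsilon^k_{\tau^k_p}|>\eta n^{1/4},\,R^n_\alpha>t\right)
&\leq E\left[\sum_{p=0}^{\lfloor\alpha n\rfloor+2}\frac{E_0\bigl[|\epsilon^k_{\tau^k_p}|^\Gamma\bigr]}{(\eta n^{1/4})^\Gamma}\right]
\leq \frac{C(\alpha n+3)}{\eta^\Gamma n^{\Gamma/4}},
\end{align*}
which tends to $0$ as $n\to\infty$ because $\Gamma>4$ implies $1-\Gamma/4<0$.

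To finish, I would invoke \eqref{alpha.infinity} to make the complementary event negligible: for any $\varepsilon>0$, choose $\alpha$ so that $\limsup_n P(R^n_\alpha\leq t)<\varepsilon$, and then the previous display yields $\limsup_n P(\sup_p|\epsilon^k_{\tau^k_p}|>\eta n^{1/4})<\varepsilon$. Since $\eta$ and $\varepsilon$ are arbitrary, the desired conclusion follows. No part of this argument should pose a real obstacle; the only sensitive point is that \ref{hypo:SA3} delivers a moment strictly greater than $4$, which is precisely what is needed to beat the $O_p(n)$ count of observations when dividing by $n^{\Gamma/4}$.
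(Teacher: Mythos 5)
Your argument is correct and follows essentially the same route as the paper: both proofs use the $\Gamma$-th moment Markov inequality together with the $\sup\le\sum$ union bound and the $O_p(n)$ control on $N^n_t$ from Lemma \ref{HJYlem2.2}, exploiting $\Gamma>4$ to beat the count of indices. The only cosmetic difference is that the paper conditions on $\mathcal{F}^{(0)}$ (so $N^n_t$ is measurable and the conditional probability is directly $o_p(1)$), whereas you localize with the stopping time $R^n_\alpha$ and \eqref{alpha.infinity}; both are standard and equivalent here.
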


\begin{proof}
Fix $\eta>0$. By the Markov inequality, \ref{hypo:SA3} and Lemma \ref{HJYlem2.2} we have
\begin{align*}
P\left(n^{-1/4}\sup_{0\leq p\leq N^n_t+1}|\epsilon^k_{\tau^k_p}|>\eta\big|\mathcal{F}^{(0)}\right)
&\leq\eta^{-\Gamma}n^{-\Gamma/4}E_0\left[\sup_{0\leq p\leq N^n_t+1}|\epsilon^k_{\tau^k_p}|^\Gamma\right]
\leq\eta^{-\Gamma}n^{-\Gamma/4}\sum_{p=0}^{N^n_t+1}E_0\left[\left|\epsilon^k_{\tau^k_{p}}\right|^\Gamma\right]\\
&\lesssim\eta^{-\Gamma}n^{-\Gamma/4}(N^n_t+2)=o_p(1),
\end{align*}
hence the desired result holds true.
\end{proof}

\begin{lem}\label{supC}
Suppose either that $V$ is a $d$-dimensional c\`adl\`ag process or that $V=\mathfrak{E}$ and \ref{hypo:SA3} holds true. Then 
\[
\sup_{1\leq q\leq N^n_t+1}|C^n_{u,v}(V)^k_{q}|=O_p(n^\gamma)
\] 
as $n\to\infty$ for any $t>0$, $u,v\in\{g,g'\}$ and $k=1,\dots,d$.
\end{lem}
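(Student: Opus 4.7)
The plan is to reduce the estimate to a uniform-in-$(n,p,q)$ bound on the kernel $c^n_{u,v}(p,q)$, combined with a $p$-uniform control on the increments $V^k(I_p)$ and the observation that each sum defining $C^n_{u,v}(V)^k_q$ contains at most $d_n = O(n^\gamma)$ terms.

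First I would establish that $\sup_{n,p,q}|c^n_{u,v}(p,q)| \leq C_0$ for some finite $C_0=C_0(g)$. Starting from
\[
c^n_{u,v}(p,q) = \frac{1}{\psi_2 k_n}\sum_{i=k_n}^{\infty} u^n_{p-i}\,v^n_{q-i},
\]
the elementary inequality $|ab|\leq\tfrac12(a^2+b^2)$ gives
\[
|c^n_{u,v}(p,q)| \leq \frac{1}{2\psi_2 k_n}\Bigl(\textstyle\sum_{j\in\mathbb{Z}}(u^n_j)^2+\sum_{j\in\mathbb{Z}}(v^n_j)^2\Bigr).
\]
Condition \ref{hypo:W}(ii) applied with $r=1$ yields $|u(x)|^2+|v(x)|^2\leq C(1+x^2)^{-2}$ for $u,v\in\{g,g'\}$, and a standard Riemann-sum argument shows $\sum_{j\in\mathbb{Z}}(u^n_j)^2+\sum_{j\in\mathbb{Z}}(v^n_j)^2=O(k_n)$ uniformly in $n$. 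This yields the desired kernel bound.

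Next I would split into the two cases. If $V$ is c\`adl\`ag on $[0,T]$, then $\sup_{0\leq s\leq T}\|V_s\|<\infty$ almost surely, so $\sup_p|V^k(I_p)|\leq 2\sup_{0\leq s\leq T}|V^k_s|=O_p(1)$, and counting the at most $d_n$ summands yields
\[
\sup_{1\leq q\leq N^n_t+1}|C^n_{u,v}(V)^k_q|\leq d_n\cdot C_0\cdot 2\sup_{0\leq s\leq T}|V^k_s|=O_p(d_n)=O_p(n^\gamma).
\]
For $V=\mathfrak{E}$, condition \ref{hypo:H}(ii) forces $\tau^k_p$ to be the unique observation time of $Y^k$ lying in $I_p=[T_{p-1},T_p)$ for $p\geq 1$, so the definition of $\mathfrak{E}$ gives $\mathfrak{E}^k(I_p)=-k_n^{-1}\epsilon^k_{\tau^k_p}$. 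Lemma \ref{sup.epsilon} supplies $\sup_{p\leq N^n_t+1}|\epsilon^k_{\tau^k_p}|=o_p(n^{1/4})$, hence $\sup_p|\mathfrak{E}^k(I_p)|=o_p(k_n^{-1}n^{1/4})=o_p(n^{-1/4})$ since $k_n\sim\theta\sqrt n$. Consequently
\[
\sup_{1\leq q\leq N^n_t+1}|C^n_{u,v}(\mathfrak{E})^k_q|\leq d_n\cdot C_0\cdot o_p(n^{-1/4})=o_p(n^{\gamma-1/4})=o_p(n^\gamma).
\]

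The only delicate point is the uniform bound on $|c^n_{u,v}|$; once this is in hand, both cases reduce to pointwise control of the increments $V^k(I_p)$ and do not present any substantive obstacle.
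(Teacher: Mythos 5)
Your proof is correct, and it is genuinely different from the paper's. For the c\`adl\`ag case, the paper uses summation by parts to rewrite $C^n_{u,v}(V)^k_q$ in terms of the values $V^k_{T_p}$ (weighted by differences $c^n_{u,v}(p,q)-c^n_{u,v}(p+1,q)$, controlled via the piecewise Lipschitz continuity of $u,v$), whereas you bound the increments $|V^k(I_p)|\le 2\sup_s|V^k_s|$ directly and count the at most $d_n$ summands; the uniform kernel bound $|c^n_{u,v}(p,q)|\lesssim 1$, which you derive via $|ab|\le\tfrac12(a^2+b^2)$ and the decay of $g,g'$, is exactly what is needed and is also used elsewhere in the paper. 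For $V=\mathfrak{E}$, the paper bounds the fourth conditional moment via the BDG inequality for sums of conditionally independent noise terms, then uses the crude union bound $\sup_q\le\sum_q$ and Markov's inequality to get the uniform control in $q$; you instead invoke Lemma~\ref{sup.epsilon} for $\sup_{p\le N^n_t+1}|\epsilon^k_{\tau^k_p}|=o_p(n^{1/4})$, together with the identity $\mathfrak{E}^k(I_p)=-k_n^{-1}\epsilon^k_{\tau^k_p}$ (which the paper records in Lemma~\ref{lemma:estimate} and indeed follows from \ref{hypo:H}(ii)), giving the sharper $o_p(n^{\gamma-1/4})$. Your argument is more elementary; the paper's Abel-summation version of the c\`adl\`ag case could in principle yield the tighter $O_p(1)$, but neither refinement is needed since $O_p(n^\gamma)$ is what is used downstream.
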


\begin{proof}
First consider the former case. In this case, summation by parts yields
\begin{align*}
C^n_{u,v}(V)^k_{q}
&=\sum_{p=(q-d_n)\vee k_n}^{q-2}\left\{c^n_{u,v}(p,q)-c^n_{u,v}(p+1,q)\right\}V^k_{T_p}\\
&\quad
+c^n_{u,v}(q-1,q)V^k_{T_{q-1}}-c^n_{u,v}((q-d_n)\vee k_n,q)V^k_{T_{(q-d_n)\vee k_n-1}},
\end{align*}
hence the (piecewise) Lipschitz continuity of $u,v$ implies that $\sup_{1\leq q\leq N^n_t+1}|C^n_{u,v}(V)^k_{q}|\lesssim n^\gamma\sup_{0\leq s\leq t}|V^k_s|=O_p(n^\gamma)$.

Next consider the latter case. In this case, the BDG inequality, \ref{hypo:SA3} and Lemma \ref{HJYlem2.2} yield
\begin{align*}
E_0\left[\sup_{1\leq q\leq N^n_t+1}|C^n_{u,v}(\mathfrak{E})^k_{q}|^4\right]
\leq\sum_{q=1}^{N^n_t+1}E_0\left[|C^n_{u,v}(\mathfrak{E})^k_{q}|^4\right]
\lesssim(N^n_t+1)k_n^{-4}d_n^2=O_p(n^{2\gamma}),
\end{align*}
hence the Markov inequality implies that $\sup_{1\leq q\leq N^n_t+1}|C^n_{u,v}(\mathfrak{E})^k_{q}|=O_p(n^\gamma)$.
\end{proof}

Now we turn to the main body of the proof of Proposition \ref{approx}. 
\begin{lem}\label{Xirep}
Under the assumptions of Proposition \ref{approx}, it holds that
\begin{equation}\label{aim:Xirep}
\sup_{0\leq t\leq T}\left|\Xi^{(k,l)}_{u,v}(U,V)^n_t-\mathbb{L}^{(k,l)}_{u,v}(U,V)^n_t-\psi_2^{-1}\phi_{u,v}(0)[U^k,V^l]_t\right|=o_p(n^{-1/4})
\end{equation}
as $n\to\infty$ for any $k,l\in\{1,\dots,d\}$, $U,V\in\{X,\mathfrak{E}\}$ and $u,v\in\{g,g'\}$.
\end{lem}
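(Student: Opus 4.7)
The plan is to begin from the identity
$$\Xi^{(k,l)}_{u,v}(U,V)^n_t = \sum_{p,q=k_n}^{N^n_T-k_n} U^k(I_p)V^l(I_q)\,\tilde{c}^n(t;p,q),\quad \tilde{c}^n(t;p,q) := \frac{1}{\psi_2 k_n}\sum_{i=k_n}^{N^n_t-k_n+1}u^n_{p-i}v^n_{q-i},$$
and to split the double sum according to $p<q$, $p=q$, $p>q$. By the symmetry of the summand under $(p,k,u)\leftrightarrow(q,l,v)$, the $p>q$ half is the transposed term $\mathbb{M}^{(l,k)}_{v,u}(V,U)^n_t$ up to residuals of the same type as the $p<q$ half, so the two off-diagonal pieces will together furnish $\mathbb{L}^{(k,l)}_{u,v}(U,V)^n_t$; the diagonal piece will supply $\psi_2^{-1}\phi_{u,v}(0)[U^k,V^l]_t$ once one verifies both that $\tilde{c}^n(t;p,p)\approx\psi_2^{-1}\phi_{u,v}(0)$ and that the discretised covariation $\sum_{p:T_p\leq t}U^k(I_p)V^l(I_p)$ approximates $[U^k,V^l]_t$ at rate $o_p(n^{-1/4})$.

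For the off-diagonal contribution (say $p<q$), I will pass from $\tilde{c}^n(t;p,q)$ to $c^n_{u,v}(p,q)\mathbf{1}_{\{T_q\leq t\}}$ in two reductions: extend the inner summation over $i$ from $[k_n,N^n_t-k_n+1]$ to $[k_n,\infty)$, and truncate $p$ to $p\geq (q-d_n)\vee k_n$. Both reductions are controlled by the super-polynomial tail decay in \ref{hypo:W}(ii): once $|p-q|$ or $\mathrm{dist}(i,\{p,q\})$ exceeds $k_n^{1+\eta}$ for an arbitrarily small $\eta>0$, the discarded weights are $O(n^{-K})$ for any $K$. The residual boundary bands (in $q$ near $t$, in $i$ near the endpoints) are handled by Cauchy--Schwarz combined with the moment bounds in Lemmas \ref{sup.epsilon}, \ref{supC} and \ref{lemma:estimate}, noting that a band of width $O(k_n^{1+\eta})$ contributes $O_p(k_n^{1+\eta})$ terms, each of conditional $L^2$-size $\bar r_n$ or $k_n^{-2}$ depending on whether $X$ or $\mathfrak{E}$ appears. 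For the diagonal, a Riemann-sum estimate yields $\tilde{c}^n(t;p,p)=\psi_2^{-1}\phi_{u,v}(0)+O(k_n^{-1})$ uniformly in $p$ with $T_p$ in the interior of $[0,t]$; the covariation approximation then splits into four subcases: $(M,M)$ is handled by a standard BDG/It\^o-isometry estimate using \ref{hypo:SA2} and \eqref{SA4}, $(\mathfrak{E},\mathfrak{E})$ is an identity up to the endpoint convention since $\mathfrak{E}$ is purely discontinuous with jumps $-k_n^{-1}\epsilon^\cdot_{\tau^\cdot_p}$, and the mixed cases $(X,\mathfrak{E}),(\mathfrak{E},X)$ form $\mathbf{F}$-martingales with predictable quadratic variation $O_p(k_n^{-2}N^n_T\bar r_n)=o_p(n^{-1/2})$; the drift contribution from $A$ in $X=X_0+A+M$ is handled separately using the boundedness of $a$ and $|A(I_p)|\leq\bar r_n\|a\|_\infty$.

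The hardest part will be securing uniformity in $t\in[0,T]$: every error depends on $t$ through the random upper limit $N^n_t$ of the $i$-summation, so pointwise-in-$t$ bounds will not suffice. My intention is to cast each residual as an $\mathbf{F}$- or $\mathbf{F}^{(0)}$-martingale plus a monotone drift (using the progressive-measurability hypothesis \eqref{progressive}) and then to apply Doob's maximal inequality on the localised interval $[0,T\wedge R^n_\alpha]$, with $R^n_\alpha$ as in \eqref{def:Rn}, afterwards letting $\alpha\to\infty$ via \eqref{alpha.infinity}, in the same fashion as in the proof of Proposition \ref{synchronization}. The most delicate bookkeeping will lie in reconciling the indicator $\mathbf{1}_{\{T_q\leq t\}}$ that appears after truncation with the fact that the original inner summation ranges over $i\in[k_n,N^n_t-k_n+1]$; I expect the bulk of the written-up proof to be consumed by this accounting rather than by fundamentally new ideas.
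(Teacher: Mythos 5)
Your plan is correct and matches the paper's proof in substance: both reduce the statement to the factorized double sum $\sum_{p,q}c^n_{u,v}(p,q)U^k(I_p)V^l(I_q)$ over $p,q$ effectively constrained to $[k_n,N^n_t+1]$, split it into off-diagonal pieces that assemble into $\mathbb{L}^{(k,l)}_{u,v}(U,V)^n_t$ after the $|p-q|\leq d_n$ truncation justified by \ref{hypo:W}(ii), and a diagonal piece that converges to $\psi_2^{-1}\phi_{u,v}(0)[U^k,V^l]_t$ via the Riemann-sum estimate $c^n_{u,v}(p,p)=\psi_2^{-1}\phi_{u,v}(0)+O(k_n^{-1})$ and the martingale property of $U^kV^l-[U^k,V^l]$; in both arguments the residual bands are controlled through Lemma \ref{lemma:estimate}, the localization $R^n_\alpha$ of \eqref{def:Rn}, and maximal inequalities. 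The only real difference is the order in which the edge effects are dispatched. The paper first replaces $\Xi^{(k,l)}_{u,v}$ by the intermediate $\widetilde{\Xi}^{(k,l)}_{u,v}(U,V)^n_t=\frac{1}{\psi_2k_n}\sum_{i\geq k_n}\mathbb{U}_{i,t}\mathbb{V}_{i,t}$, moving the $t$-dependent cut-off from the $i$-index onto the $p$-index so that the coefficient of $U^k(I_p)V^l(I_q)$ becomes the $t$-free $c^n_{u,v}(p,q)$ before the $p$-versus-$q$ split; the boundary bands (the terms $\mathbb{A}_{1,t},\mathbb{A}_{2,t}$, bounded via H\"older and the $\Gamma$-th moments of $\breve{U}(u)^k_i$) are thereby handled once, in a single preliminary step. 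You instead carry the $t$-dependent coefficient $\tilde{c}^n(t;p,q)$ into each of the three pieces and clean it up piecewise; this works, but, as you anticipate, the reconciliation of the indicator $\mathbf{1}_{\{T_q\leq t\}}$ with the moving $i$-range becomes the dominant burden and you must redo essentially the same boundary-band estimate three times, with the extra delicacy that both the summand $\tilde{c}^n(t;p,q)$ and the band location depend on $t$, so you will in effect reconstruct $\breve{U}(u)^k_i,\breve{V}(v)^l_i$ (or replace $\tilde c^n$ by a $t$-free dominating kernel) before the H\"older/Doob step can close. The paper's ordering buys tidier bookkeeping; the mathematical content and the key lemmas invoked are the same.
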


\begin{proof}
Similarly to the proof of Proposition \ref{synchronization}, it suffices to prove \eqref{aim:Xirep} with replacing $\sup_{0\leq t\leq T}$ by $\sup_{0\leq t\leq T\wedge R^n_\alpha}$, where $R^n_\alpha$ is defined by \eqref{def:Rn}. 

First we show that $\Xi^{(k,l)}_{u,v}(U,V)^n_t=\widetilde{\Xi}^{(k,l)}_{u,v}(U,V)^n_t+o_p(n^{-1/4})$ uniformly in $t\in[0,T\wedge R^n_\alpha]$, where
\begin{equation}\label{Xi.tilde}
\widetilde{\Xi}^{(k,l)}_{u,v}(U,V)^n_t=\frac{1}{\psi_2k_n}\sum_{i=k_n}^\infty\mathbb{U}_{i,t}\mathbb{V}_{i,t},
\end{equation}
$\mathbb{U}_{i,t}=\sum_{p=k_n}^{N^n_t+1}u^n_{p-i}U^k(I_p)$ and $\mathbb{V}_{i,t}$ is defined analogously. Thanks to \ref{hypo:W}, we have
\begin{align*}
\Xi^{(k,l)}_{u,v}(U,V)^n_t-\widetilde{\Xi}^{(k,l)}_{u,v}(U,V)^n_t
&=\frac{1}{\psi_2 k_n}\sum_{i=N^n_t-d_n+1}^{N^n_t-k_n+1}\breve{U}(u)^k_i\breve{V}(v)^l_i
+\frac{1}{\psi_2 k_n}\sum_{i=N^n_t-d_n+1}^\infty\mathbb{U}_{i,t}\mathbb{V}_{i,t}
+o_p(n^{-1/4})\\
&=:\mathbb{A}_{1,t}+\mathbb{A}_{2,t}+o_p(n^{-1/4})
\end{align*}
uniformly in $t\in[0,T\wedge R^n_\alpha]$. The H\"older inequality, Lemma \ref{lemma:estimate}, \eqref{estimate:beta}, \eqref{SC3} and \eqref{est.xi} imply that
\begin{align*}
 E\left[\sup_{0\leq t\leq T\wedge R^n_\alpha}\left|\mathbb{A}_{1,t}\right|\right]
&\leq\frac{1}{\psi_2k_n}E\left[\sup_{0\leq t\leq T\wedge R^n_\alpha}\sum_{i=N^n_t-d_n+1}^{N^n_t-k_n+1}\left|\breve{U}(u)^k_i\right|\left|\breve{V}(v)^l_i\right|\right]\\
&\leq\psi_2^{-1}k_n^{-1} d_n^{1-2/\Gamma}E\left[\sup_{0\leq t\leq T\wedge R^n_\alpha}\left\{\sum_{i=k_n}^{N^n_t+d_n}\left|\breve{U}(u)^k_i\right|^{\Gamma/2}\left|\breve{V}(v)^l_i\right|^{\Gamma/2}\right\}^{2/\Gamma}\right]\\
&\lesssim k_n^{-1} d_n^{1-2/\Gamma}n^{2/\Gamma}k_n\bar{r}_n
=O(n^{1/\Gamma+1/2-\xi+\gamma(1-2/\Gamma)})=o(n^{-1/4}),
\end{align*}
hence we obtain $\sup_{0\leq t\leq T\wedge R^n_\alpha}\left|\mathbb{A}_{1,t}\right|=o_p(n^{-1/4})$. On the other hand, noting that $\mathbb{A}_{2,t}=\frac{1}{\psi_2 k_n}\sum_{i=N^n_t-d_n+1}^{N^n_t+d_n}\mathbb{U}_{i,t}\mathbb{V}_{i,t}+o_p(n^{-1/4})$ uniformly in $t\in[0,T\wedge R^n_\alpha]$ due to \ref{hypo:W}, we similarly deduce $\sup_{0\leq t\leq T\wedge R^n_\alpha}\left|\mathbb{A}_{2,t}\right|=o_p(n^{-1/4})$.

Next, a direct computation shows $\widetilde{\Xi}^{(k,l)}_{u,v}(U,V)^n_t=\sum_{p,q=k_n}^{N^n_t+1}c_{u,v}^n(p,q)U^k(I_p)V^l(I_q)$, hence \ref{hypo:W} implies that
\begin{align*}
\widetilde{\Xi}^{(k,l)}_{u,v}(U,V)^n_t
&=\mathbb{L}^{(k,l)}_{u,v}(U,V)^n_t
+\sum_{p=k_n}^{N^n_t+1} c^n_{u,v}(p,p)U^k(I_{p})V^l(I_{p})+o_p(n^{-1/4})
%&=:\mathbb{B}_{1,t}+\mathbb{B}_{2,t}+\mathbb{B}_{3,t}.
\end{align*}
uniformly in $t\in[0,T\wedge R^n_\alpha]$.
Therefore, the proof is completed once we prove
\begin{equation}\label{Xirep.aim4}
\sup_{0\leq t\leq T\wedge R^n_\alpha}\left|\mathbb{B}_{t}-\psi_2^{-1}\phi_{u,v}(0)[U^k,V^l]_t\right|=o_p(n^{-1/4}),
\end{equation}
where $\mathbb{B}_t=\sum_{p=k_n}^{N^n_t+1} c^n_{u,v}(p,p)U^k(I_{p})V^l(I_{p})$. 
If $U=A$ or $V=A$, $(\ref{Xirep.aim4})$ holds true since $E[\sup_{0\leq t\leq T\wedge R^n_\alpha}|\mathbb{B}_{t}|]\lesssim\sqrt{\bar{r}_n}T=o(n^{-1/4})$ and $[U^k,V^l]=0$. Otherwise,  $U^kV^l-[U^k,V^l]$ is an $(\mathcal{F}_t)$-martingale, hence a standard martingale argument yields 
$\mathbb{B}_{t}=\sum_{p=k_n}^{N^n_t+1} c^n_{u,v}(p,p)[U^k,V^l](I_p)+o_p(n^{-1/4})$ 
uniformly in $t\in[0,T\wedge R^n_\alpha]$. Moreover, since $c^n_{u,v}(p,p)=\psi_2^{-1}\phi_{u,v}(0)+O_p(k_n^{-1})$ uniformly in $p\geq d_n$ and $\sum_{p=k_n}^{d_n} c^n_{u,v}(p,p)[U^k,V^l](I_p)=o_p(n^{-1/4})$ as well as $[U^k,V^l]_{T_{d_n}}=o_p(n^{-1/4})$, we obtain 
$\mathbb{B}_{t}=\psi_2^{-1}\phi_{u,v}(0)[U^k,V^l]_t+o_p(n^{-1/4})$ uniformly in $t\in[0,T\wedge R^n_\alpha]$ due to Lemma \ref{sup.epsilon}. Thus we complete the proof.
\end{proof}

%%%%%%%%%%%%%%%%%%%%%%%%%%%%%%%%%%%%%%%%%%%%%%%%
%                    Negligibility of the drift
%%%%%%%%%%%%%%%%%%%%%%%%%%%%%%%%%%%%%%%%%%%%%%%%

In the remaining tasks to prove Proposition \ref{approx}, the most sophisticated part is the proof of the negligibility of the term $\mathbb{L}^{(k,l)}_{u,v}(A,M)^n$. If the process $a_s$ is a constant and $T_p$'s are independent of $M$, $\mathbb{L}^{(k,l)}_{u,v}(A,M)^n$ is a martingale with respect to an appropriate filtration, so this is an easy task. 
%Dropping the assumption that $b(m)$ is a constant is not difficult (this can be achieved by a standard approximation argument). Here the problem is that $T_p$ could depend on $M$.
Dropping the assumption that $a_s$ is a constant is not difficult. Here the problem is that $T_p$ could depend on $M$. 
In fact, in a pure diffusion setting this dependence could cause the non-negligibility of the approximation error of the realized covariance due to the drift term (see \cite{Fu2010b} or \cite{LMRZZ2014} for details). Unlike such a setting, we can prove the negligibility of such a term without ruling out the dependence between $(T_p)$ and $M$, as long as \ref{hypo:A4} is satisfied: %For the later use we prove this fact in a more general setting as follows:
\begin{lem}\label{HYlem13}
\if0
Let $a$ be a bounded $(\mathcal{H}^\wedge_t)$-progressively measurable $\mathbb{R}^d$-valued process such that there is a constant $K'$ satisfying
\begin{equation}\label{SA5}
E\left[\|a_{t_1}-a_{t_2}\|^2|\mathcal{F}_{t_1\wedge t_2}\right]
\leq K'E\left[|t_1-t_2|^\varpi|\mathcal{F}_{t_1\wedge t_2}\right]
\end{equation}
for any $(\mathcal{F}^{(0)}_t)$-stopping times $t_1$ and $t_2$ bounded by $T$, and define the process $A$ by $A_t=\int_0^ta_s\mathrm{d}s$. 
\fi
Suppose that $V\in\{A,M,\mathfrak{E}\}$, $u,v\in\{g,g'\}$ and $k,l\in\{1,\dots,d\}$. Under the assumptions of Proposition \ref{approx}, we have $\sup_{0\leq t\leq T}|\mathbb{M}_{u,v}^{(k,l)}(V,A)^n_t|=o_p(n^{-1/4})$ and $\sup_{0\leq t\leq T}|\mathbb{M}_{u,v}^{(k,l)}(A,V)^n_t|=o_p(n^{-1/4})$.
\end{lem}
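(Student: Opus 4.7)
The plan is to isolate the predictable part of the drift contribution and treat the remainder as a martingale-difference sum that is then controlled by Lemma \ref{useful}. After the usual localization replacing $\sup_{0\leq t\leq T}$ with $\sup_{0\leq t\leq T\wedge R^n_\alpha}$ for $R^n_\alpha$ as in \eqref{def:Rn}, I would decompose
\[
A^l(I_q)=n^{-1}a^l_{T_{q-1}}G^n_{T_{q-1}}+a^l_{T_{q-1}}\xi_q+R^l_q,\qquad
\xi_q:=(T_q-T_{q-1})-n^{-1}G^n_{T_{q-1}},
\]
with $R^l_q=\int_{T_{q-1}}^{T_q}(a^l_s-a^l_{T_{q-1}})\,\mathrm{d}s$. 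By \ref{hypo:A4ii} and \ref{hypo:SA1} we have $E[\xi_q\mid\mathcal{F}_{T_{q-1}}]=0$ for $q-1\notin\mathcal{N}^n$ and $E[|R^l_q|^2\mid\mathcal{F}_{T_{q-1}}]\lesssim\bar{r}_n^{2+\varpi}$; moreover $C^n_{u,v}(V)^k_q$ is $\mathcal{F}_{T_{q-1}}$-measurable in every case $V\in\{A,M,\mathfrak{E}\}$. Substituting, $\mathbb{M}^{(k,l)}_{u,v}(V,A)^n_t$ splits as $\mathrm{I}_t+\mathrm{II}_t+\mathrm{III}_t+\mathrm{IV}_t$, namely a predictable main piece $\mathrm{I}_t=n^{-1}\sum_q C^n_{u,v}(V)^k_q a^l_{T_{q-1}}G^n_{T_{q-1}}$, two martingale-difference sums over $q-1\notin\mathcal{N}^n$ driven by $\xi_q$ and by $R^l_q$, and a residual $\mathrm{IV}_t$ supported on $\mathcal{N}^n$.

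I would dispose of $\mathrm{II}_t$ and $\mathrm{III}_t$ with Lemma \ref{useful} by combining the pointwise estimate $\sup_q|C^n_{u,v}(V)^k_q|=O_p(n^\gamma)$ of Lemma \ref{supC} with the conditional variance bounds $E[\xi_q^2\mid\mathcal{F}_{T_{q-1}}]\lesssim\bar{r}_n/n$ and the bound on $R^l_q$ above: the aggregated conditional variances are $O_p(n^{2\gamma-\xi})$ and $O_p(n^{1+2\gamma-(2+\varpi)\xi})$ respectively, both of which are $o_p(n^{-1/2})$ under the constraint \eqref{est.xi}. The exceptional residual $\mathrm{IV}_t$ would be controlled directly via the cardinality bound $\#(\mathcal{N}^n\cap\{q:T_q\leq T\})=O_p(n^\kappa)$ of \ref{hypo:A4i}, yielding $|\mathrm{IV}_t|=O_p(n^{\kappa+\gamma-\xi})=o_p(n^{-1/4})$ by \eqref{est.xi}.

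The main obstacle is the predictable piece $\mathrm{I}_t$, since the naive size bound gives only $O_p(n^\gamma)$: further cancellation from $V$ must be extracted. I would interchange the order of summation to rewrite
\[
\mathrm{I}_t=n^{-1}\sum_{p}V^k(I_p)\sum_{q=p+1}^{\min\{p+d_n,\,N^n_t+1\}}c^n_{u,v}(p,q)a^l_{T_{q-1}}G^n_{T_{q-1}},
\]
noting that the inner coefficient is uniformly $O(k_n)$ because $c^n_{u,v}(p,\cdot)$ has effective support of width $\asymp k_n$ and the multipliers are bounded. A BDG/orthogonality bound then gives $E|\mathrm{I}_t|^2\lesssim n^{-\xi}$ for $V=M$, and $E_0|\mathrm{I}_t|^2=O_p(n^{-1})$ for $V=\mathfrak{E}$ using the $\mathcal{F}^{(0)}$-conditional independence of the $\epsilon^k_{\tau^k_p}$; for $V=A$ the crude deterministic bound $|\mathrm{I}_t|\lesssim k_n\bar{r}_n=n^{1/2-\xi}$ is already sufficient. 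A $\mathrm{C}$-tightness argument in $t$, of the same flavour as the one used for the term $\mathbb{III}$ in the proof of Proposition \ref{synchronization}, would upgrade the pointwise bound to a uniform one on $[0,T]$. The second estimate, for $\mathbb{M}^{(k,l)}_{u,v}(A,V)^n$, is obtained by applying the same decomposition to each factor $A^k(I_p)$ inside $C^n_{u,v}(A)^k_q$; the ``new'' predictable main coefficient carries an extra $n^{-1}$ factor compared with the previous case, so its subsequent treatment runs in parallel and is strictly easier.
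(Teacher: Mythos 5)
Your overall architecture matches the paper's (localization to $[0,T\wedge R^n_\alpha]$, replacement of $A^l(I_q)$ by its conditional duration approximation, martingale--difference treatment of the fluctuations via Lemma~\ref{useful}, and a cardinality bound on the exceptional set $\mathcal{N}^n$). However, the handling of the ``predictable main piece'' $\mathrm{I}_t$ for $V=M$ contains a genuine gap. After you interchange the order of summation, the factor multiplying $V^k(I_p)$ is $\sum_{q=p+1}^{\min\{p+d_n,N^n_t+1\}}c^n_{u,v}(p,q)\,a^l_{T_{q-1}}G^n_{T_{q-1}}$, which involves $T_p,\dots,T_{p+d_n-1}$. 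Under time endogeneity these future sampling times may depend on $M(I_p)$, so this coefficient is $\mathcal{F}_{T_{p+d_n-1}}$-measurable but \emph{not} $\mathcal{F}_{T_{p-1}}$-measurable, and $M^k(I_p)$ is not orthogonal to it. The cross terms in $E|\mathrm{I}_t|^2$ therefore do not vanish, and the claimed bound ``$E|\mathrm{I}_t|^2\lesssim n^{-\xi}$ for $V=M$'' obtained from ``a BDG/orthogonality bound'' is unjustified. (Your argument for $V=\mathfrak{E}$ is fine, since there the coefficients are $\mathcal{F}^{(0)}$-measurable and the noise is $\mathcal{F}^{(0)}$-conditionally independent; the trouble is specifically $V=M$.)

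The paper closes exactly this gap with the extra step \eqref{contG}: writing $F=a^lG$ and invoking the continuity hypotheses \ref{hypo:SA1} and \ref{hypo:A4v}, it replaces $F_{T_{q-1}}$ by the $d_n$-lagged value $F_{T_{(q-d_n-1)_+}}$, at an additional cost of order $\sqrt{d_n\bar r_n}(d_n\bar r_n)^{\varpi/2}=o(n^{-1/4})$. With this replacement the coefficient $H^p=n^{-3/4}\sum_{q=p+1}^{p+d_n}c^n_{u,v}(p,q)F_{T_{(q-d_n-1)_+}}$ \emph{is} $\mathcal{F}_{T_{p-1}}$-measurable (since $(q-d_n-1)_+\le p-1$ for all $q\le p+d_n$), so $\sum_p H^pV^k(I_p)$ becomes a genuine martingale sum to which Lemma~\ref{useful} applies. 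Your proposal never uses \ref{hypo:A4v}, and without this lag-shift the argument for $V=M$ does not go through; this is the missing idea you would need to add.
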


\begin{proof}
For the proof we may replace $\sup_{0\leq t\leq T}$ by $\sup_{0\leq t\leq T\wedge R^n_\alpha}$ similarly to the above. 

First, since \ref{hypo:SA2} and $(\ref{SA4})$ yield $\sup_{0\leq t\leq T\wedge R^n_\alpha}\left|\mathbb{M}^{(k,l)}_{u,v}(A,A)^n_t\right|\lesssim d_n\bar{r}_n=o_p(n^{-1/4})$, the lemma holds true for $V=A$. Therefore, it suffices to consider the case that $V\in\{M,\mathfrak{E}\}$. In this case $E\left[C^n_{u,v}(A)^k_qV^l(I_q)\big|\mathcal{F}_{T_{q-1}}\right]=0$ and
\begin{align*}
\sqrt{n}\sum_{q=k_n}^{N^n_{T\wedge R^n_\alpha}+1}E\left[\left|C^n_{u,v}(A)^k_qV^l(I_q)\right|^2\big|\mathcal{F}_{T_{q-1}}\right]
=O_p\left(\sqrt{n}(d_n\bar{r}_n)^2\right)=O_p(n^{3/2+2\gamma-2\xi})=o_p(1)
\end{align*}
by \eqref{SA4}, \ref{hypo:SA2}--\ref{hypo:SA3} and \eqref{est.xi}, hence Lemma \ref{useful} yields $\sup_{0\leq t\leq T\wedge R^n_\alpha}|\mathbb{M}_{u,v}^{(k,l)}(A,V)^n_t|=o_p(n^{-1/4})$.

Now we prove $\sup_{0\leq t\leq T\wedge R^n_\alpha}|\mathbb{M}_{u,v}^{(k,l)}(V,A)^n_t|=o_p(n^{-1/4})$. First, by \eqref{SA4}, \ref{hypo:SA2}--\ref{hypo:SA3} and the Doob inequality, there is a constant $K$ such that
\begin{equation}\label{maest}
E\left[\left|C^n_{u,v}(V)^k_q\right|^2|\mathcal{F}_{T_{(q-d_n-1)_+}}\right]\leq Kd_n\bar{r}_n
\end{equation}
for any $q,n$. Combining this estimate with \eqref{SA4}, \eqref{SC3} and \eqref{SA5}, we obtain
\begin{align*}
&E\left[\sup_{0\leq t\leq T\wedge R^n_\alpha}\left|\sum_{q=k_n}^{N^n_t+1} C^n_{u,v}(V)^k_q\left\{A^l(I_q)-a^l_{T_{q-1}}|I_q|\right\}\right|\right]\\
\leq&E\left[\sum_{q=k_n}^{N^n_{T\wedge R^n_\alpha}+1}\left|C^n_{u,v}(V)^k_q\right|\left\{\int_{T_{q-1}}^{T_{q-1}+2\bar{r}_n}E\left[\left|a^l_s-a^l_{T_{q-1}}\right||\mathcal{F}_{T_{q-1}}\right]\mathrm{d}s\right\}\right]%\\
\lesssim n\sqrt{d_n\bar{r}_n}\bar{r}_n^{1+\varpi/2}
%\lesssim&n\sqrt{k_n\bar{r}_n}\bar{r}_n^{1+\varpi/2}=O\left(n^{-\frac{\xi}{2}(3+\varpi)+\frac{5}{4}}\right)
=o(n^{-1/4}).
\end{align*}
Therefore, we have
\begin{equation}\label{shiftA}
\sup_{0\leq t\leq T\wedge R^n_\alpha}\left|\mathbb{M}^{(k,l)}_{u,v}(V,A)^n_t-\sum_{q=k_n}^{N^n_t+1} C^n_{u,v}(V)^k_q a^l_{T_{q-1}}|I_q|\right|=o_p(n^{-1/4}).
\end{equation}

Next we show that
\begin{equation}\label{usefularg}
\sup_{0\leq t\leq T\wedge R^n_\alpha}\left|\mathbb{M}^{(k,l)}_{u,v}(V,A)^n_t-n^{-1}\sum_{q=k_n}^{N^n_t+1}C^n_{u,v}(V)^k_q a^l_{T_{q-1}}G_{T_{q-1}}\right|=o_p(n^{-1/4}).
\end{equation}
\eqref{SA4}, the boundedness of $a$, \eqref{maest} and \eqref{SC3} yield
\begin{align*}
E\left[\sqrt{n}\sum_{q=k_n}^{N^n_{T\wedge R^n_\alpha}+1}\left|C^n_{u,v}(V)^k_q a^l_{T_{q-1}}|I_q|\right|^2\right]
&\lesssim \sqrt{n}\cdot n\cdot d_n\bar{r}_n\cdot \bar{r}_n^2
=O(n^{2+\gamma-3\xi})=o(1).
\end{align*}
Therefore, Lemma \ref{useful} implies that
\begin{equation*}
\sup_{0\leq t\leq T\wedge R^n_\alpha}\left|\mathbb{M}^{(k,l)}_{u,v}(V,A)^n_t-n^{-1}\sum_{q=k_n}^{N^n_t+1}C^n_{u,v}(V)^k_q a^l_{T_{q-1}}E\left[n|I_q|\big|\mathcal{F}^{(0)}_{T_{q-1}}\right]\right|=o_p(n^{-1/4}),
\end{equation*}
hence by \ref{hypo:A4}, Lemma \ref{supC}, the boundedness of $a$, \eqref{SA4} and \eqref{est.xi} we obtain 
\begin{equation*}
\sup_{0\leq t\leq T\wedge R^n_\alpha}\left|\mathbb{M}^{(k,l)}_{u,v}(V,A)^n_t-n^{-1}\sum_{q=k_n}^{N^n_t+1}C^n_{u,v}(V)^k_q a^l_{T_{q-1}}G^n_{T_{q-1}}\right|=o_p(n^{-1/4}).
\end{equation*}
Furthermore, since we have
\begin{equation*}
\sup_{0\leq t\leq T\wedge R^n_\alpha}\left|n^{-1}\sum_{q=k_n}^{N^n_t+1}C^n_{u,v}(V)^k_q a^l_{T_{q-1}}\left(G^n_{T_{q-1}}-G_{T_{q-1}}\right)\right|
\lesssim\sup_{0\leq t\leq T}\left|G^n_{t}-G_{t}\right|\cdot n^{-1}\sum_{q=k_n}^{N^n_{T\wedge R^n_\alpha}+1}\left|C^n_{u,v}(V)^k_q\right|
\end{equation*}
and $n^{-1}\sum_{q=k_n}^{N^n_{T\wedge R^n_\alpha}+1}\left|C^n_{u,v}(V)^k_q\right|=O_p(\sqrt{d_n\bar{r}_n})$ due to \eqref{maest} and \eqref{SC3}, we obtain \eqref{usefularg} by \ref{hypo:A4} and \eqref{est.xi}.

Now we show that
\begin{equation}\label{contG}
\sup_{0\leq t\leq T\wedge R^n_\alpha}\left|\mathbb{M}^{(k,l)}_{u,v}(V,A)^n_t-n^{-1}\sum_{q=k_n}^{N^n_t+1}C^n_{u,v}(V)^k_qF_{T_{(q-d_n-1)_+}}\right|=o_p(n^{-1/4}),
\end{equation}
where $F=a^lG$. First, by $(\ref{SA5})$, \ref{hypo:SA1}, \ref{hypo:A4v} as well as a standard localization procedure, for each $j\geq1$ there are a bounded $(\mathcal{F}^{(0)}_t)$-progressively measurable process $F(j)$, $(\mathcal{F}^{(0)}_t)$-stopping time $\rho_j$ and a constant $K_j$ such that $\rho_j\uparrow\infty$ as $j\to\infty$, $F_t=F(j)_t$ if $t<\rho_j$ and 
$E\left[|F(j)_{t_1}-F(j)_{t_2}|^2|\mathcal{F}_{t_1\wedge t_2}\right]
\leq K_jE\left[|t_1-t_2|^\varpi|\mathcal{F}_{t_1\wedge t_2}\right]$ 
for any $(\mathcal{F}^{(0)}_t)$-stopping times $t_1,t_2$ bounded by $T$. Then, for a fixed $j$, the Schwarz inequality, $(\ref{maest})$, the boundedness of $a$, \eqref{SA4} and \eqref{SC3} yield
\begin{align*}
&E\left[n^{-1}\sup_{0\leq t\leq T\wedge R^n_\alpha}\left|\sum_{q=k_n}^{N^n_t+1}C^n_{u,v}(V)^k_q\left(F_{T_{q-1}}-F_{T_{(q-d_n-1)_+}}\right)\right|;T<\rho_j\right]\\
%\leq&E\left[n^{-1}\sum_{q=k_n}^{N^n_{T\wedge R^n_\alpha}+k_n+1}E\left[\left|C^n_{u,v}(V)^k_q\left(F(j)_{T\wedge T_{q-1}}-F(j)_{T_{(q-d_n-1)_+}}\right)\right||\mathcal{F}_{T_{(q-d_n-1)_+}}\right]\right]\\
&\lesssim\sqrt{d_n\bar{r}_n}(d_n\bar{r}_n)^{\varpi/2}
=O_p\left(n^{-\frac{1+\varpi}{2}\left(\xi-\frac{1}{2}-\gamma\right)}\right)=o_p(n^{-1/4}).
\end{align*}
Since $\lim_{j\to\infty}P(\rho_j\leq T)=0$, we conclude that \eqref{contG} holds true by the Markov inequality.

After all, it suffices to show that $\sup_{0\leq t\leq T\wedge R^n_\alpha}|\mathbb{A}_t|\to^p0$ as $n\to\infty$, where
$\mathbb{A}_t=n^{-3/4}\sum_{q=k_n}^{N^n_t+1}C^n_{u,v}(V)^k_qF_{T_{(q-d_n-1)_+}}.$ 
Set $H^p=n^{-3/4}\sum_{q=p+1}^{p+d_n}c^n_{u,v}(p,q)F_{T_{(q-d_n-1)_+}}$. Then, by construction $H^p$ is $\mathcal{F}_{T_{p-1}}$-measurable and we have $\mathbb{A}_t=\sum_{p=k_n}^{N^n_t}H^pV^k(I_p)$. Therefore, 
by Lemma \ref{useful} it is enough to prove 
$\sum_{p=k_n}^{N^n_{T\wedge R^n_\alpha}+1}E\left[\left|H^pV^k(I_p)\right|^2\big|\mathcal{F}_{T_{p-1}}\right]\to^p0,$ 
which follows from \eqref{est.xi} and the fact that $|H^p|\lesssim n^{-3/4}d_n\sup_{0\leq t\leq T_{p-1}}|F_t|$ uniformly in $p$. Thus we complete the proof.
\end{proof}

\begin{proof}[\upshape{\bfseries{Proof of Proposition \ref{approx}}}]
Note that $\phi_{g,g'}(0)=\phi_{g',g}(0)=0$ due to integration by parts and \ref{hypo:W}. Therefore, in the light of Lemmas \ref{Xirep}--\ref{HYlem13} as well as \eqref{alpha.infinity} the proof is completed once we show that
\begin{equation}\label{EEconv}
n^{1/4}\sup_{0\leq t\leq T}\left|\frac{1}{2k_n^2}[Y,Y]^{n,kl}_t-[\mathfrak{E}^k,\mathfrak{E}^l]_t\right|\to^p0
\end{equation}
as $n\to\infty$. First, it can easily be shown that
\begin{align*}
\frac{1}{2k_n^2}[Y,Y]^{n,kl}_t=\frac{1}{2k_n^2}\sum_{p=1}^{N^n_t}(\epsilon^k_{\tau^k_p}\epsilon^l_{\tau^l_{p}}+\epsilon^k_{\tau^k_{p-1}}\epsilon^l_{\tau^l_{p-1}})+O_p(n^{-1/2})
\end{align*}
uniformly in $t\in[0,T]$. On the other hand, we can write $[\mathfrak{E}^k,\mathfrak{E}^l]_t=k_n^{-2}\sum_{p=1}^{N^n_t+1}\epsilon^k_{\tau^k_p}\epsilon^l_{\tau^l_p}1_{\{\tau^k_p=\tau^l_p\leq t\}}$, hence
\begin{align*}
&\sup_{0\leq t\leq T}\left|\frac{1}{2k_n^2}\sum_{p=1}^{N^n_t}(\epsilon^k_{\tau^k_p}\epsilon^l_{\tau^l_{p}}+\epsilon^k_{\tau^k_{p-1}}\epsilon^l_{\tau^l_{p-1}})-[\mathfrak{E}^k,\mathfrak{E}^l]_t\right|\\
\leq&\sup_{0\leq t\leq T}\left|\frac{1}{k_n^2}\sum_{p=1}^{N^n_t}\epsilon^k_{\tau^k_p}\epsilon^l_{\tau^l_{p}}1_{\{\tau^k_p\neq\tau^l_p\}}\right|+\frac{1}{2k_n^2}\left|\epsilon^k_{\tau^k_0}\epsilon^l_{\tau^l_{0}}\right|+\frac{1}{k_n^2}\sup_{0\leq p\leq N^n_T+1}\left|\epsilon^k_{\tau^k_{p}}\epsilon^l_{\tau^l_{p}}\right|
=:\Gamma_1+\Gamma_2+\Gamma_3.
\end{align*}
The Doob inequality yields $\Gamma_1=O_p(n^{-1/2})$, while we obviously have $\Gamma_2=O_p(n^{-1})$. Furthermore, Lemma \ref{sup.epsilon} implies that $\Gamma_3=o_p(k_n^{-2}n^{1/2})=o_p(n^{-1/4})$.
This yields $(\ref{EEconv})$.
\end{proof}

\subsubsection{Proof of Lemma \ref{lemenergy}}\label{sec.contCLT}

Let $(U,u),(V,v),(\check{U},\check{u}),(\check{V},\check{v})\in\{(M,g),(\mathfrak{E},g')\}$ and set 
$$\mathfrak{V}^n_t=\sqrt{n}\sum_{q=k_n}^{N^n_t+1}C^n_{u,v}(U)^k_qC^n_{\check{u},\check{v}}(\check{U})^{k'}_qE\left[V^l(I_q)\check{V}^{l'}(I_q)\big|\mathcal{F}_{T_{q-1}}\right].$$ 
It suffices to compute the limiting variable of $\mathfrak{V}^n_t$ explicitly.

Set $H_q=\sqrt{n}C^n_{u,v}(U)^k_qC^n_{\check{u},\check{v}}(\check{U})^{k'}_q$. Then, for any $r\in[1,2]$ there is a positive constant $K_r$ such that
\begin{align}
E\left[\left|H_{q}\right|^r\big|\mathcal{F}_{T_{(q-d_n-1)_+}}\right]&\leq K_r\left(\sqrt{n}d_n\bar{r}_n\right)^{r}\label{maestc}
\end{align}
for every $q$ by the Schwarz and BDG inequalities, \ref{hypo:SA2}--\ref{hypo:SA3} and \eqref{SA4}. This estimate will often be used in the following. Moreover, we can rewrite $\mathfrak{V}^n_t$ as $\mathfrak{V}^n_t=\sum_{q=k_n}^{N^n_t+1}H_qE\left[[V^l,\check{V}^{l'}](I_q)\big|\mathcal{F}_{T_{q-1}}\right]$ since $V^l\check{V}^{l'}-[V^l,\check{V}^{l'}]$ is an $(\mathcal{F}_t)$-martingale.

Now we separately consider the following three cases:

\noindent\textit{Case} 1: $V=\check{V}=M$.
We fix $\alpha>0$ for a while. First, since the boundedness of $\sigma$, \eqref{SA4}, \eqref{maestc} and \eqref{SC3} yield
\begin{align*}
E\left[\sum_{q=k_n}^{N^n_{t\wedge R^n_\alpha}+1}\left|H_q\right|^2E\left[\left|[M^l,M^{l'}](I_q)\right|^2\big|\mathcal{F}_{T_{q-1}}\right]\right]
&\lesssim\bar{r}_n^2E\left[\sum_{q=k_n}^{N^n_{t\wedge R^n_\alpha}+d_n+1}\left|H_q\right|^2\right]\\
&\lesssim\bar{r}_n^2\cdot n\cdot n(d_n\bar{r}_n)^2
=O(n^{3+2\gamma-4\xi})=o(1),
\end{align*}
Lemma \ref{useful} implies that $\mathfrak{V}^n_{t\wedge R^n_\alpha}=\sum_{q=k_n}^{N^n_{t\wedge R^n_\alpha}+1}H_q[M^l,M^{l'}](I_q)+o_p(1)$. Next, since $[M^l,M^{l'}]_t=\int_0^t\Sigma^{ll'}_s\mathrm{d}s$, by a similar argument to the proof of \eqref{shiftA} (using \eqref{maestc} instead of \eqref{maest}) we can show that
\begin{align*}
E\left[\left|\sum_{q=k_n}^{N^n_{t\wedge R^n_\alpha}+1} H_{q}\left\{[M^l,M^{l'}](I_q)-\Sigma^{ll'}_{T_{q-1}}|I_q|\right\}\right|\right]
\lesssim n\cdot \sqrt{n}d_n\bar{r}_n\cdot\bar{r}_n^{1+\varpi/2}
=O(n^{2+\gamma-(2+\varpi/2)\xi}).
\end{align*}
Hence $(\ref{est.xi})$ yields 
\begin{equation}\label{NNshift}
\mathfrak{V}^n_{t\wedge R^n_\alpha}=\sum_{q=k_n}^{N^n_{t\wedge R^n_\alpha}+1} H_{q}\Sigma^{ll'}_{T_{q-1}}|I_q|+o_p(1).
\end{equation}
Moreover, similar arguments to the proofs of \eqref{usefularg} and \eqref{contG} (using \eqref{maestc} instead of \eqref{maest}) yield 
\begin{equation}\label{lhg}
\mathfrak{V}^n_{t\wedge R^n_\alpha}=n^{-1}\sum_{q=k_n}^{N^n_{t\wedge R^n_\alpha}+1}H_{q}F_{T_{(q-d_n-1)_+}}+o_p(1),
\end{equation}
where $F=\Sigma^{ll'}G$. 
$(\ref{lhg})$ yields 
$\mathfrak{V}^n_{t\wedge R^n_\alpha}=\sum_{p,p'=k_n}^{N^n_{t\wedge R^n_\alpha}}\widetilde{H}_{p,p'}U^{k}(I_p)\check{U}^{k'}(I_{p'})+o_p(1),$ 
where
\begin{align*}
\widetilde{H}_{p,p'}=n^{-1/2}\sum_{q=p\vee p'+1}^{p\wedge p'+d_n}c^n_{u,v}(p,q)c^n_{\check{u},\check{v}}(p',q)F_{T_{(q-d_n-1)_+}}.
\end{align*}
Therefore, we have the following decomposition:
\begin{align*}
\mathfrak{V}^n_{t\wedge R^n_\alpha}
&=\left[\sum_{k_n\leq p<p'\leq N^n_{t\wedge R^n_\alpha}}+\sum_{k_n\leq p'<p\leq N^n_{t\wedge R^n_\alpha}}+\sum_{k_n\leq p=p'\leq N^n_{t\wedge R^n_\alpha}}\right]\widetilde{H}_{p,p'}U^{k}(I_p)\check{U}^{k'}(I_{p'})+o_p(1)\\
&=:\mathbb{I}+\mathbb{II}+\mathbb{III}+o_p(1).
\end{align*}
We first prove $\mathbb{I}=o_p(1)$. Fix $L>0$, and we further decompose $\mathbb{I}$ as
\begin{align*}
\mathbb{I}=\sum_{k_n\leq p<p'\leq N^n_{t\wedge R^n_\alpha}}\left(\widetilde{H}_{p,p'}1_{\{|\widetilde{H}_{p,p'}|\leq L\}}+\widetilde{H}_{p,p'}1_{\{|\widetilde{H}_{p,p'}|>L\}}\right)U^{k}(I_p)\check{U}^{k'}(I_{p'})
=:\mathbb{I}'(L)+\mathbb{I}''(L).
\end{align*}
First we show $\mathbb{I}'(L)=o_p(1)$ as $n\to\infty$. Since $\widetilde{H}_{p,p'}1_{\{|\widetilde{H}_{p,p'}|\leq L\}}U^k(I_p)$ is $\mathcal{F}_{T_{p'-1}}$-measurable for $p<p'$ and $E[\check{U}^{k'}(I_{p'})|\mathcal{F}_{T_{p'-1}}]=0$ and we have
\begin{align*}
\left|\mathbb{I}'(L)\right|\leq\sup_{k_n\leq j\leq N^n_{t\wedge R^n_\alpha}+1}\left|\sum_{p'=k_n}^{j}\sum_{p:k_n\leq p< p'}\widetilde{H}_{p,p'}1_{\{|\widetilde{H}_{p,p'}|\leq L\}}U^k(I_p)\check{U}^{k'}(I_{p'})\right|,
\end{align*}
by the Lenglart inequality it suffices to prove
\begin{align*}
\Delta_n:=E\left[\sum_{p'=k_n}^{N^n_{t\wedge R^n_\alpha}+1}\left|\sum_{p:k_n\leq p< p'}\widetilde{H}_{p,p'}1_{\{|\widetilde{H}_{p,p'}|\leq L\}}U^k(I_p)\check{U}^{k'}(I_{p'})\right|^2\right]\to0.
\end{align*}
The boundedness of $\sigma$ and $\Upsilon$, \eqref{SA4}, \eqref{SC3} and the fact that $\widetilde{H}_{p,p'}=0$ if $|p-p'|\geq d_n$ as well as $\widetilde{H}_{p,p'}$ is $\mathcal{F}_{T_{p\wedge p'-1}}$-measurable yield
\begin{align*}
\Delta_n
%&\lesssim\bar{r}_nE\left[\sum_{p'=k_n}^{N^n_{t\wedge R^n_\alpha}+1}\left|\sum_{p:k_n\leq p< p'}\widetilde{H}_{p,p'}1_{\{|\widetilde{H}_{p,p'}|\leq L\}}U^k(I_p)\right|^2\right]\\
&\lesssim\bar{r}_nE\left[\sum_{p'=k_n}^{N^n_{t\wedge R^n_\alpha}+d_n+1}\sum_{p:k_n\vee(p'-d_n+1)\leq p< p'}\left|\widetilde{H}_{p,p'}1_{\{|\widetilde{H}_{p,p'}|\leq L\}}U^k(I_p)\right|^2\right]
\lesssim L^2nd_n\bar{r}_n^2,
\end{align*}
hence we obtain the desired result. Next we show $\lim_{L\to\infty}\limsup_nP(|\mathbb{I}''(L)|>0)=0$. %Since we have $|\mathbb{I}''(L)|\leq\sum_{k_n\leq p<p'\leq N^n_{t\wedge R^n_\alpha}}|\widetilde{H}_{p,p'}|1_{\{|\widetilde{H}_{p,p'}|>L\}}|U^{k}(I_p)\check{U}^{k'}(I_{p'})|$ and there is a constant $K>0$ such that $|\widetilde{H}_{p,p'}|\leq K\sup_{0\leq s\leq t}|F_s|$ if $1\leq p<p'\leq N^n_{t\wedge R^n_\alpha}$, it holds that  
%$\limsup_nP(|\mathbb{I}''(L)|>0)\leq P(\sup_{0\leq s\leq t}|F_s|>L/K)$. 
First, since $|c^n_{\check{u},\check{v}}(p',q)|\lesssim1$ and $\sum_{q=-\infty}^\infty|c^n_{u,v}(p,q)|
\leq\frac{k_n}{\psi_2}\left(\frac{1}{k_n}\sum_{i=-\infty}^\infty|u^n_{i}|\right)\left(\frac{1}{k_n}\sum_{q=-\infty}^\infty|v^n_{q}|\right)\lesssim k_n$ by [W], there is a constant $K>0$ such that $|\widetilde{H}_{p,p'}|\leq K\sup_{0\leq s\leq t}|F_s|$ if $1\leq p<p'\leq N^n_{t\wedge R^n_\alpha}$. So, noting that $|\mathbb{I}''(L)|\leq\sum_{k_n\leq p<p'\leq N^n_{t\wedge R^n_\alpha}}|\widetilde{H}_{p,p'}|1_{\{|\widetilde{H}_{p,p'}|>L\}}|U^{k}(I_p)\check{U}^{k'}(I_{p'})|$, we obtain $\limsup_nP(|\mathbb{I}''(L)|>0)\leq P(\sup_{0\leq s\leq t}|F_s|>L/K)$. 
This yields the desired result because $F$ is c\`adl\`ag. Consequently, we conclude that $\mathbb{I}=o_p(1)$ as $n\to\infty$.

By symmetry we also have $\mathbb{II}=o_p(1)$ as $n\to\infty$. Now we consider $\mathbb{III}$. First, a similar argument to the proof of \eqref{contG} yields
\begin{align*}
\mathbb{III}=\sum_{p=k_n}^{N^n_{t\wedge R^n_\alpha}}\left[n^{-1/2}\sum_{q=p+1}^{p+d_n}c^n_{u,v}(p,q)c^n_{\check{u},\check{v}}(p,q)\right]F_{T_{p-1}}U^{k}(I_p)\check{U}^{k'}(I_{p})+o_p(1).
\end{align*}
Moreover, we have $n^{-1/2}\sum_{q=p+1}^{p+d_n}c^n_{u,v}(p,q)c^n_{\check{u},\check{v}}(p,q)=\theta\psi_2^{-2}\int_0^\infty\phi_{u,v}(y)\phi_{\check{u},\check{v}}(y)\mathrm{d}y+O_p(k_n^{-1})$ uniformly in $p\geq d_n$ by \ref{hypo:W}, hence we obtain
\begin{align*}
\mathbb{III}=\left(\frac{\theta}{\psi_2^2}\int_0^\infty\phi_{u,v}(y)\phi_{\check{u},\check{v}}(y)\mathrm{d}y\right)\sum_{p=k_n}^{N^n_{t\wedge R^n_\alpha}}F_{T_{p-1}}U^{k}(I_p)\check{U}^{k'}(I_{p})+o_p(1).
\end{align*}

Now combining these results with \eqref{alpha.infinity}, we conclude that
\begin{align*}
\mathfrak{V}^n_t=\left(\frac{\theta}{\psi_2^2}\int_0^\infty\phi_{u,v}(y)\phi_{\check{u},\check{v}}(y)\mathrm{d}y\right)\sum_{p=k_n}^{N^n_{t}}F_{T_{p-1}}U^{k}(I_p)\check{U}^{k'}(I_{p})+o_p(1).
\end{align*}
Therefore, in the light of Theorem VI-6.22 of \cite{JS}, the limiting variable of $\mathfrak{V}^n_t$ can be computed explicitly once we show that
\begin{align}
&\textstyle\sup_{0\leq t\leq T}\left|\sum_{p=k_n}^{N^n_{t}}M^{k}(I_p)M^{k'}(I_{p})-[M^k,M^{k'}]_t\right|\to^p0,\label{MMlimit}\\
&\textstyle\sup_{0\leq t\leq T}\left|\sum_{p=k_n}^{N^n_{t}}M^{k}(I_p)\mathfrak{E}^{k'}(I_{p})\right|\to^p0,\label{MElimit}\\
&\textstyle\sup_{0\leq t\leq T}\left|\sum_{p=k_n}^{N^n_{t}}\mathfrak{E}^{k}(I_p)\mathfrak{E}^{k'}(I_{p})-\int_0^t\frac{\widetilde{\Upsilon}^{kk'}_s}{G_s}\mathrm{d}s\right|\to^p0\label{EElimit}
\end{align}
for any $T>0$. 
\if0
$(\ref{MMlimit})$ is a well-known result (see e.g.~Theorem 3.3.1 of \cite{JP2012}). On the other hand, noting that $M^k\mathfrak{E}^{k'}$ is an $(\mathcal{F}_t)$-martingale, the Doob inequality, \ref{hypo:SA2}--\ref{hypo:SA3} and $(\ref{SA4})$ yield
\begin{align*}
E\left[\sup_{0\leq t\leq T}\left|\sum_{p=1}^{N^n_{t}}M^{k}(I_p)\mathfrak{E}^{k'}(I_{p})\right|^2\right]
%\leq E\left[\sup_{1\leq j\leq N^n_T+1}\left|\sum_{p=1}^{j}M^{k}(I_p)\mathfrak{E}^{k'}(I_{p})\right|^2\right]\\
\leq&4E\left[\sum_{p=1}^{N^n_T+1}\left|M^{k}(I_p)\mathfrak{E}^{k'}(I_{p})\right|^2\right]
\lesssim k_n^{-2}(T+\bar{r}_n)=o(1),
\end{align*}
which implies $(\ref{MElimit})$. 
Finally, the Doob inequality and \ref{hypo:SA3} imply that%for each $\omega^{(0)}\in\Omega^{(0)}$ the sequence $(\mathfrak{E}^{k}(I_p)(\omega^{(0)})\mathfrak{E}^{k'}(I_{p})(\omega^{(0)})-k_n^{-2}\Upsilon^{kk'}_{\tau^k_p}(\omega^{(0)})1_{\{\tau^k_p(\omega^{(0)})=\tau^{k'}_p(\omega^{(0)})\}})_{p=1}^\infty$ is a martingale difference with respect to the filtration $(\mathcal{F}^{(1)}_t)$ under the probability measure $Q(\omega^{(0)},\mathrm{d}\omega^{(1)})$, hence
\begin{align*}
E_0\left[\sup_{0\leq t\leq T}\left|\sum_{p=1}^{N^n_{t}}\mathfrak{E}^{k}(I_p)\mathfrak{E}^{k'}(I_{p})-\frac{1}{k_n^2}\sum_{p=1}^{N^n_t}\Upsilon^{kk'}_{\tau^k_p}1_{\{\tau^k_p=\tau^{k'}_p\}}\right|^2\right]
&\leq\frac{4}{k_n^4}E_0\left[\sum_{p=1}^{N^n_{T}}\left|\epsilon^{k}_{\tau^k_p}\epsilon^{k'}_{\tau^{k'}_p}-\Upsilon^{kk'}_{\tau^k_p}1_{\{\tau^k_p=\tau^{k'}_p\}}\right|^2\right]\\
&\lesssim k_n^{-4}N^n_T,
\end{align*}
hence Corollary \ref{C3} yields
\fi
\eqref{MMlimit}--\eqref{MElimit} can be verified by standard martingale arguments based on Lemma \ref{useful}. On the other hand, another standard martingale argument yields  
$\sum_{p=1}^{N^n_{t}}\mathfrak{E}^{k}(I_p)\mathfrak{E}^{k'}(I_{p})
=\frac{1}{k_n^2}\sum_{p=1}^{N^n_t}\Upsilon^{kk'}_{\tau^k_p}1_{\{\tau^k_p=\tau^{k'}_p\}}+o_p(1)$ 
uniformly in $t\in[0,T]$. Then, using the boundedness and the c\`adl\`ag property of $\Upsilon$ as well as Lemma \ref{HJYlem2.2}, we can easily show that
\begin{align*}
\sum_{p=1}^{N^n_{t}}\mathfrak{E}^{k}(I_p)\mathfrak{E}^{k'}(I_{p})
=\frac{1}{k_n^2}\sum_{p=1}^{N^n_t+1}\Upsilon^{kk'}_{T_{p-1}}1_{\{\tau^k_p=\tau^{k'}_p\}}+o_p(1)
\end{align*}
uniformly in $t\in[0,T]$, hence by Lemma \ref{useful} and \ref{hypo:A4} we obtain 
$\sum_{p=1}^{N^n_{t}}\mathfrak{E}^{k}(I_p)\mathfrak{E}^{k'}(I_{p})
=\frac{1}{k_n^2}\sum_{p=1}^{N^n_t+1}\widetilde{\Upsilon}^{kk'}_{T_{p-1}}+o_p(1)$ 
uniformly in $t\in[0,T]$. Now \eqref{EElimit} follows from Theorem VI-6.22 of \cite{JS} and Lemma \ref{HJYlem2.2}) (note that the convergence $N^n_t/n\to^p\int_0^t1/G_s\mathrm{d}s$ holds true uniformly in $t\in[0,T]$ because the limiting process is nondecreasing).

\noindent\textit{Case} 2: $V=\check{V}=\mathfrak{E}$. Again fix $\alpha>0$. In this case we have $E\left[[V^l,V^{l'}](I_q)\big|\mathcal{F}_{T_{q-1}}\right]=E\left[\Upsilon^{ll'}_{\tau^l_p}1_{\{\tau^l_p=\tau^{l'}_p\}}\big|\mathcal{F}_{T_{q-1}}\right]$, hence a similar argument to the proof of \eqref{NNshift} yields $\mathfrak{V}^n_{t\wedge R^n_\alpha}=\frac{1}{k_n^2}\sum_{q=k_n}^{N^n_{t\wedge R^n_\alpha}+1} H_{q}\Upsilon^{ll'}_{T_{q-1}}1_{\{\tau^l_q=\tau^{l'}_q\}}+o_p(1)$, and a similar argument to the proof of \eqref{usefularg} (using \eqref{maestc} instead of \eqref{maest}) implies that $\mathfrak{V}^n_{t\wedge R^n_\alpha}=\frac{1}{k_n^2}\sum_{q=k_n}^{N^n_{t\wedge R^n_\alpha}+1} H_{q}\widetilde{\Upsilon}^{ll'}_{T_{q-1}}+o_p(1)$. Now we can apply similar arguments to those of Case 1 after the equation \eqref{lhg}, and thus we obtain 
\begin{align*}
\mathfrak{V}^n_t=\left(\frac{1}{\psi_2^2\theta}\int_0^\infty\phi_{u,v}(y)\phi_{\check{u},\check{v}}(y)\mathrm{d}y\right)\sum_{p=k_n}^{N^n_t}\widetilde{\Upsilon}^{ll'}_{T_{p-1}}U^k(I_p)\check{U}^{k'}(I_p)+o_p(1).
\end{align*}
Now the proof is completed in a similar manner to the previous case.

\noindent\textit{Case} 3: $V\neq\check{V}$. In this case we have $[V^l,V^{l'}]=0$, hence it holds that $\mathfrak{V}^n_t\to^p0$.

Consequently, we complete the proof.\hfill$\Box$

\subsubsection{Proof of Lemma \ref{lemremain}}

(a) By \eqref{SA4}, \ref{hypo:SA2}--\ref{hypo:SA3}, the BDG inequality and \eqref{SC3} we have  
\begin{align*}
E\left[n\sum_{q=k_n}^{N^n_{t\wedge R^n_\alpha}+1}E\left[\left|C^n_{u,v}(U)^k_qV^l(I_q)\right|^4\big|\mathcal{F}_{T_{q-1}}\right]\right]
&\lesssim n\bar{r}_n^2E\left[\sum_{q=k_n}^{N^n_{t\wedge R^n_\alpha}+d_n+1}\left|C^n_{u,v}(U)^k_q\right|^4\right]\\
&\lesssim n\bar{r}_n^2\cdot n(d_n\bar{r}_n)^2
=O(n^{3+2\gamma-4\xi})=o(1).
\end{align*}
Therefore, the Markov inequality and \eqref{alpha.infinity} yield the desired result.

(b) Since $E[\mathfrak{E}^l(I_q)W^{j}(I_q)|\mathcal{F}_{T_{q-1}}]=0$, it is enough to consider the case that $V=M$. In this case a standard martingale argument yields%we have
\if0
\begin{align*}
n^{1/4}\sum_{q=k_n}^{N^n_{t}+1}E\left[C^n_{u,v}(U)^k_qV^l(I_q)W^{j}(I_q)\big|\mathcal{F}_{T_{q-1}}\right]
=n^{1/4}\sum_{q=k_n}^{N^n_{t}+1}C^n_{u,v}(U)^k_qE\left[[M^l,W^{j}](I_q)\big|\mathcal{F}_{T_{q-1}}\right],
\end{align*}
hence a standard martingale argument yields 
\begin{align*}
n^{1/4}\sum_{q=k_n}^{N^n_{t\wedge R^n_\alpha}+1}E\left[C^n_{u,v}(U)^k_qV^l(I_q)W^{j}(I_q)\big|\mathcal{F}_{T_{q-1}}\right]
&=n^{1/4}\sum_{q=k_n}^{N^n_{t\wedge R^n_\alpha}+1}C^n_{u,v}(U)^k_q[M^l,W^{j}](I_q)+o_p(1)\\
&=n^{1/4}\mathbb{M}^{(k,l)}_{u,v}(U,A)_{t\wedge R^n_\alpha}+o_p(1),
\end{align*}
\fi
\begin{align*}
n^{1/4}\sum_{q=k_n}^{N^n_{t}+1}E\left[C^n_{u,v}(U)^k_qV^l(I_q)W^{j}(I_q)\big|\mathcal{F}_{T_{q-1}}\right]
&=n^{1/4}\sum_{q=k_n}^{N^n_{t}+1}C^n_{u,v}(U)^k_q[M^l,W^{j}](I_q)+o_p(1)\\
&=n^{1/4}\mathbb{M}^{(k,l)}_{u,v}(U,B)_{t}+o_p(1),
\end{align*}
where $B=([M^l,W^{j}])_{1\leq l\leq d}$. Therefore, noting that $B^l_t=\int_0^t\sigma^{lj}_s\mathrm{d}s$ and $\sigma$ satisfies \eqref{SA3}, Lemma \ref{HYlem13} yields the desired result.

(c) Since $N$ is orthogonal to $W$ and defined on $\mathcal{B}^{(0)}$, we have $E\left[C^n_{u,v}(U)^k_qV^l(I_q)N(I_q)\big|\mathcal{F}_{T_{q-1}}\right]=0$, which yields the desired result. \hfill$\Box$

\subsection{Proofs of the results from Section \ref{discussion:A4}}

\subsubsection{Proof of Proposition \ref{proplzz}}\label{proofproplzz}

By a localization procedure we may replace \ref{hypo:A1}--\ref{hypo:A4} by \ref{hypo:SA1}--\ref{hypo:SA4}, respectively. First, Lemma \ref{lemma:estimate}, \eqref{SA4} and the boundedness of $a$ yield
\begin{align*}
\frac{\sqrt{n}}{k_n^{3/2}}\sum_{i=k_n}^{N^n_t-k_n+1}(\widetilde{X}_{i,T})^3
=\frac{\sqrt{n}}{k_n^{3/2}}\sum_{i=k_n}^{N^n_t-k_n+1}(\widetilde{M}_{i,T})^3+o_p(1).
\end{align*}
Then, similarly to the proofs of Proposition \ref{synchronization} and \eqref{Xi.tilde}, we can deduce
\begin{align*}
\frac{\sqrt{n}}{k_n^{3/2}}\sum_{i=k_n}^{N^n_t-k_n+1}(\widetilde{X}_{i,T})^3
=\frac{\sqrt{n}}{k_n^{3/2}}\sum_{i=k_n}^\infty(\widehat{M}_{i,t})^3+o_p(1),
\end{align*}
where $\widehat{M}_{i,t}=\sum_{p=k_n}^\infty g^n_{p-i} M(I_p(t))$ and $I_p(t)=[T^{p-1}\wedge t,T_p\wedge t)$. 
Now, by It\^o's formula we deduce
\begin{align*}
\frac{\sqrt{n}}{k_n^{3/2}}\sum_{i=k_n}^\infty(\widehat{M}_{i,t})^3
=3\frac{\sqrt{n}}{k_n^{3/2}}\sum_{i=k_n}^\infty\int_0^t(\widehat{M}_{i,s})^2\mathrm{d}\widehat{M}_{i,s}
+3\frac{\sqrt{n}}{k_n^{3/2}}\sum_{i=k_n}^\infty\int_0^t\widehat{M}_{i,s}\mathrm{d}[\widehat{M}_{i,\cdot},\widehat{M}_{i,\cdot}]_s
=:\mathbf{I}_t+\mathbf{II}_t.
\end{align*}
Since $\mathbf{I}$ is a locally square-integrable martingale and its predictable quadratic variation satisfies
\begin{align*}
E\left[\langle\mathbf{I}\rangle_t\right]&=9\frac{n}{k_n^{3}}\sum_{i,j=k_n}^\infty E\left[\int_0^t(\widehat{M}_{i,s})^2(\widehat{M}_{j,s})^2\mathrm{d}\langle\widehat{M}_{i,\cdot},\widehat{M}_{j,\cdot}\rangle_s\right]
\lesssim \frac{n\bar{r}_n}{k_n}\sum_{i=k_n}^\infty E\left[\int_0^t(\widehat{M}_{i,s})^4\mathrm{d}s\right]
%\lesssim&\frac{n^{-2\xi+2+6\gamma}}{k_n^{3}}(\log n)^2\sum_{i,j=k_n}^\infty\sum_{p=k_n}^\infty g^n_{p-i}g^n_{p-j}\langle M\rangle(I_p)_t
\lesssim n\bar{r}_n\cdot k_n\bar{r}_n=o_p(1),
\end{align*}
the Lenglart inequality yields $\mathbf{I}_t=o_p(1)$. On the other hand, by using associativity and linearity we obtain
\begin{align*}
\mathbf{II}_t%=3\frac{b_n^{-1/2}}{k_n^{3/2}}\sum_{i=k_n}^\infty\sum_{p,q=k_n}^\infty g^n_{p-i}(g^2)^n_{q-i}M(I_p)_-\bullet[M](I_q)_t\\
=3\psi_2\frac{\sqrt{n}}{k_n^{1/2}}\sum_{p,q=k_n}^\infty c_{g,g^2}(p,q)\int_{I_q(t)}M(I_p(s))\mathrm{d}[M,M]_s
=\frac{3\psi_2}{\sqrt{\theta}}n^{1/4}\mathbb{M}^{(1,1)}_{g,g^2}(M,[M,M])^n_t+o_p(1),
\end{align*}
hence Lemma \ref{HYlem13} yields $\mathbf{II}_t=o_p(1)$, and thus we complete the proof.\hfill $\Box$

\subsubsection{Proof of Proposition \ref{LRVT}}

Application of the Davis and Lenglart inequalities deduces $\mathfrak{S}_{n,m}(t)=\sum_{p=m+1}^{N^n_t+1}|I_p|\frac{1}{m}\sum_{q=1}^{m}E\left[n|I_{p-q+1}|\big|\mathcal{F}^{(0)}_{T_{p-q}}\right]+o_p(1)$. Then we obtain $\mathfrak{S}_{n,m}(t)=\sum_{p=1}^{N^n_t}G^n_{T_{p-1}}|I_p|+o_p(1)$ due to \ref{hypo:A4}, which yields the desired result. \hfill$\Box$

\subsection{Proofs of the results from Section \ref{sec:jumps}}

In the following we set $i(k)=\lceil nS_k\rceil$ for every $k\in\mathbb{N}$.

\subsubsection{Proof of Proposition \ref{jclt}}

\begin{lem}\label{lemma:threshold}
Suppose that $g$ is a function satisfying \ref{hypo:W}. Then, under the assumptions of Proposition \ref{jclt},
\[
\sum_{i=k_n}^{n-k_n+1}P\left(\left|\widetilde{Y}(g)_{i,1}\right|>\frac{\rho_n}{2}\right)\to0\qquad
\textrm{as }n\to\infty.
\]
\end{lem}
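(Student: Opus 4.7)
The plan is to exploit two simplifications of the setting of Proposition \ref{jclt} that reduce the problem to an elementary Gaussian tail estimate: first, since $a\equiv 0$, $\sigma,\Upsilon$ are constants, the latent price is a scaled Brownian motion and $\epsilon\sim\mathrm{i.i.d.}\,\mathbf{N}(0,\Upsilon)$; second, observation times are deterministic ($\tau^1_p=p/n$) so there is no randomness coming from the sampling scheme. Consequently $\widetilde Y(g)_{i,1}$ is a linear combination of jointly Gaussian variables, hence itself centered Gaussian. The whole proof therefore reduces to producing a variance bound $\sigma_n^2:=\sup_i\mathrm{Var}(\widetilde Y(g)_{i,1})$ of the right order and invoking the tail inequality $P(|Z|>x)\le 2\exp(-x^2/(2\sigma_n^2))$ for $Z\sim\mathbf{N}(0,\sigma_n^2)$.

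Next I would establish the key uniform variance estimate
\[
\sup_{k_n\le i\le n-k_n+1}\mathrm{Var}\bigl(\widetilde Y(g)_{i,1}\bigr)\le C\left(\frac{k_n}{n}+\frac{1}{k_n}\right)=O(n^{-1/2}),
\]
splitting $\widetilde Y(g)_{i,1}=\widetilde X(g)_{i,1}+\widetilde\epsilon(g)_{i,1}$. For $\widetilde X(g)_{i,1}$ the middle returns $X_{p/n}-X_{(p-1)/n}$ are independent with variance $\sigma^2/n$, so their contribution is $(\sigma^2/n)\sum_p (g^n_{p-i})^2\le C\sigma^2 k_n/n$ thanks to \ref{hypo:W}(ii), which guarantees the Riemann-like sum $\sum_p(g^n_{p-i})^2$ is bounded by a constant times $k_n$ uniformly in $i$; the two jittered boundary returns $\widetilde\Delta_{\tau^1_{k_n}}X$ and $\widetilde\Delta_{\tau^1_{n-k_n+1}}X$ contribute each a variance of order $k_n/n$ by standard Brownian scaling, and after weighting by bounded values of $g$ they are absorbed. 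For $\widetilde\epsilon(g)_{i,1}$, I would use the summation-by-parts identity \eqref{eq:epsilon}, so that the noise expression becomes $-\sum_p\Delta(g)^n_{p-i}\epsilon_{\tau^1_p}$ plus edge terms, and \ref{hypo:W}(i),(ii) give $|\Delta(g)^n_{p-i}|\le C k_n^{-1}(1+|p-i|/k_n)^{-r}$; by independence of the $\epsilon$'s the variance is bounded by $C\Upsilon\sum_p(\Delta(g)^n_{p-i})^2\le C'\Upsilon/k_n$.

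With $\sigma_n^2=O(n^{-1/2})$ in hand and $\rho_n=cn^{-w}$, the ratio $\rho_n^2/\sigma_n^2\ge c'n^{1/2-2w}$ is a strictly positive power of $n$ since $w<1/4$. Gaussian tail bound then gives
\[
\sum_{i=k_n}^{n-k_n+1}P\!\left(\bigl|\widetilde Y(g)_{i,1}\bigr|>\rho_n/2\right)\le 2(n-2k_n+2)\exp\!\left(-\tfrac{c'}{8}\,n^{1/2-2w}\right),
\]
which tends to $0$ at a stretched-exponential rate, establishing the lemma.

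The only genuinely delicate step is the uniform variance bound, specifically treating the edge contributions and being sure that the unbounded support of $g$ does not cause the sums $\sum_p(g^n_{p-i})^2$ and $\sum_p(\Delta(g)^n_{p-i})^2$ to grow faster than $k_n$ and $k_n^{-1}$ respectively. The polynomial decay in \ref{hypo:W}(ii) (applied with any $r>1$) gives this immediately and is really all that is required; everything else is bookkeeping.
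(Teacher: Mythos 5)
Your proof is correct, and it takes a genuinely different route from the one in the paper. The paper's argument is more generic: it applies the Markov inequality with an $r$-th moment for $r>4/(1-4w)$ and invokes Lemma \ref{lemma:estimate} to bound $E[|\widetilde{Y}(g)_{i,1}|^r]\lesssim n^{-r/4}$ uniformly in $i$ (in the simplified setting $\bar r_n=1/n$ and all moments of the Gaussian noise exist, so $r$ may be taken as large as needed); this yields $\sum_i P(|\widetilde{Y}(g)_{i,1}|>\rho_n/2)\lesssim n^{1-r/4+rw}\to0$, a polynomial rate. You instead observe that in the setting of Proposition \ref{jclt} ($a\equiv 0$, constant $\sigma$, $\Upsilon$, deterministic sampling $\tau^1_p=p/n$, i.i.d.\ Gaussian noise) the variable $\widetilde Y(g)_{i,1}$ is exactly centered Gaussian, establish the uniform variance bound $\sup_i\variance(\widetilde Y(g)_{i,1})=O(k_n/n+k_n^{-1})=O(n^{-1/2})$ by splitting into diffusion and noise parts and controlling the jittered edges and the Riemann-type sums $\sum_p(g^n_{p-i})^2$ and $\sum_p(\Delta(g)^n_{p-i})^2$ via \ref{hypo:W}(ii), and then apply a Gaussian tail bound to get a stretched-exponential decay of each summand, which dominates the factor $n$ from the sum. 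Your variance estimate is essentially the $r=2$ case of Lemma \ref{lemma:estimate}, so the real difference is in the concentration step: the paper's Markov-with-high-moments argument is more robust (it would survive with only polynomial moments of the noise, which is what the general framework guarantees under \ref{hypo:A3}), whereas your Gaussian tail argument is more elementary and gives a much stronger rate, but leans on the explicit Gaussianity assumed in Proposition \ref{jclt}. Both are fully rigorous in the setting of the lemma.
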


\begin{proof}
Take a constant $r$ such that $r>4/(1-4w)$. Then, the Markov inequality and Lemma \ref{lemma:estimate} imply that
\begin{align*}
\sum_{i=k_n}^{n-k_n+1}P\left(\left|\widetilde{Y}(g)_{i,1}\right|>\rho_n\right)
\leq\frac{1}{\rho_n^r}\sum_{i=k_n}^{n-k_n+1}E\left[\left|\widetilde{Y}(g)_{i,1}\right|^r\right]
\lesssim\frac{n^{1-r/4}}{\rho_n^r},
\end{align*}
hence the assumption on $\rho_n$ yields the desired result.
\end{proof}

\begin{proof}[\upshape{\bfseries{Proof of Proposition \ref{jclt}}}]
We start with proving the following equations:
\begin{align*}
\widehat{IV}_n(g_1,\rho_{n})&=\mathbb{H}(Y,Y;g_1)^n-\frac{\psi(g)_1}{\psi(g)_2k_n^2}[Z,Z]^n_1+o_p(n^{-1/4}),\\
\widehat{JV}_n(g_2,\rho_{n})&=2\mathbb{H}(Y,J;g_2)^n+\mathbb{H}(J,J;g_2)^n+o_p(n^{-1/4}),
\end{align*}
where $\mathbb{H}(U,V;g)^n=\frac{1}{\psi(g)_2 k_n}\sum_{i=k_n}^{n-k_n+1}\widetilde{U}(g)_{i,1}\widetilde{V}(g)_{i,1}$. For this, it suffices to show
\begin{align*}
\frac{1}{k_n}\sum_{i=k_n}^{n-k_n+1}\left(\widetilde{Y}(g)_{i,1}\right)^21_{\left\{\left|\widetilde{Z}(g)_{i,1}\right|>\rho_n\right\}}=o_p(n^{-1/4}),\qquad
\frac{1}{k_n}\sum_{i=k_n}^{n-k_n+1}\widetilde{U}(g)_{i,1}\widetilde{J}(g)_{i,1}1_{\left\{\left|\widetilde{Z}(g)_{i,1}\right|\leq\rho_n\right\}}=o_p(n^{-1/4})
\end{align*}
for $U\in\{Y,J\}$. First, Lemma \ref{lemma:threshold} implies that
\begin{align*}
\frac{1}{k_n}\sum_{i=k_n}^{n-k_n+1}\left(\widetilde{Y}(g)_{i,1}\right)^21_{\left\{\left|\widetilde{Z}(g)_{i,1}\right|>\rho_n\right\}}=\frac{1}{k_n}\sum_{i=k_n}^{n-k_n+1}\left(\widetilde{Y}(g)_{i,1}\right)^21_{\left\{\left|\widetilde{Z}(g)_{i,1}\right|>\rho_n,\left|\widetilde{Y}(g)_{i,1}\right|\leq\rho_n/2\right\}}+o_p(n^{-1/4}),\\
\frac{1}{k_n}\sum_{i=k_n}^{n-k_n+1}\widetilde{U}(g)_{i,1}\widetilde{J}(g)_{i,1}1_{\left\{\left|\widetilde{Z}(g)_{i,1}\right|\leq\rho_n\right\}}=\frac{1}{k_n}\sum_{i=k_n}^{n-k_n+1}\widetilde{U}(g)_{i,1}\widetilde{J}(g)_{i,1}1_{\left\{\left|\widetilde{Z}(g)_{i,1}\right|\leq\rho_n,\left|\widetilde{Y}(g)_{i,1}\right|\leq\rho_n/2\right\}}+o_p(n^{-1/4}).
\end{align*}
Next, take $\eta\in(0,2w-\frac{1}{4})$ and set $\mathcal{I}_n=\{i\in\{k_n,\dots,n-k_n+1\}:S_k\in(\frac{i-n^{1/2+\eta}}{n},\frac{i+n^{1/2+\eta}}{n})\textrm{ for some }k=1,\dots,L_1\}$. Such an $\eta$ exists because $w>1/8$. Then, noting that $|\widetilde{J}(g)_{i,1}|$ is sufficiently small if $i\notin\mathcal{I}_n$ because of \ref{hypo:W}, we have
\begin{align*}
\frac{1}{k_n}\sum_{i=k_n}^{n-k_n+1}\left(\widetilde{Y}(g)_{i,1}\right)^21_{\left\{\left|\widetilde{Z}(g)_{i,1}\right|>\rho_n\right\}}
=\frac{1}{k_n}\sum_{i\in\mathcal{I}_n}\left(\widetilde{Y}(g)_{i,1}\right)^21_{\left\{\left|\widetilde{Z}(g)_{i,1}\right|>\rho_n,\left|\widetilde{Y}(g)_{i,1}\right|\leq\rho_n/2\right\}}+o_p(n^{-1/4}),\\
\frac{1}{k_n}\sum_{i=k_n}^{n-k_n+1}\widetilde{U}(g)_{i,1}\widetilde{J}(g)_{i,1}1_{\left\{\left|\widetilde{Z}(g)_{i,1}\right|\leq\rho_n\right\}}
=\frac{1}{k_n}\sum_{i\in\mathcal{I}_n}\widetilde{U}(g)_{i,1}\widetilde{J}(g)_{i,1}1_{\left\{\left|\widetilde{Z}(g)_{i,1}\right|\leq\rho_n,\left|\widetilde{Y}(g)_{i,1}\right|\leq\rho_n/2\right\}}+o_p(n^{-1/4}).
\end{align*}
Now, since it holds that
\begin{align*}
\frac{1}{k_n}\sum_{i\in\mathcal{I}_n}\left(\widetilde{Y}(g)_{i,1}\right)^21_{\left\{\left|\widetilde{Z}(g)_{i,1}\right|>\rho_n,\left|\widetilde{Y}(g)_{i,1}\right|\leq\rho_n/2\right\}}
\leq\frac{\rho_n^2\#\mathcal{I}_n}{4k_n}
\lesssim n^{\eta-2w}
\end{align*}
and
\begin{align*}
\frac{1}{k_n}\left|\sum_{i\in\mathcal{I}_n}\widetilde{U}(g)_{i,1}\widetilde{J}(g)_{i,1}1_{\left\{\left|\widetilde{Z}(g)_{i,1}\right|\leq\rho_n,\left|\widetilde{Y}(g)_{i,1}\right|\leq\rho_n/2\right\}}\right|
\leq\frac{9\rho_n^2\#\mathcal{I}_n}{4k_n}
\lesssim n^{\eta-2w},
\end{align*}
we obtain the desired equations.

Next, by simple calculations we can easily deduce that $\mathbb{H}(J,J;g_2)^n=\sum_{k=1}^{L_1}(\Delta J_{S_k})^2+o_p(n^{-1/4})$ and 
\begin{align*}
\mathbb{H}(Y,J;g_2)^n
&=\sum_{k=1}^{L_1}\left[\sum_{p=i(k)-d_n}^{i(k)+d_n}\left\{c^n_{g_2,g_2}(p,i(k))X(I_p)-\frac{1}{k_n}c^n_{g_2',g_2}(p,i(k))\epsilon_{\frac{p}{n}}\right\}\right]\Delta J_{S_k}+o_p(n^{-1/4}).
\end{align*}
Therefore, we can prove the desired result in a similar manner to the proof of Proposition 6.2 from \cite{Koike2014jclt}, which is based on Propositions 6.6--6.7 and Lemma 6.7 from \cite{Koike2014jclt}.  
\end{proof}

\subsubsection{Proof of Proposition \ref{LAN}}\label{proof:LAN}

We begin by introducing some notation. We denote by $P_{n,\vartheta}$ the law of the vector $\mathbf{z}_n:=(\underline{Z}_1,\dots,\underline{Z}_n)^*$ from \eqref{jump_model} with $\vartheta=(\sigma,\gamma_1,\dots,\gamma_K)$. Define the $n\times n$ matrices $D_n$ and $V_n(\sigma)$ by
\begin{equation}\label{eq:matrices}
D_n^{ij}=\left\{
\begin{array}{ll}
1  & \textrm{if }i=j, \\
-1 & \textrm{if } i=j+1,  \\
0 &  \textrm{otherwise},
\end{array}
\right.\qquad
\textrm{and}\qquad
V_n(\sigma)^{ij}=\left\{
\begin{array}{ll}
\frac{\sigma^2}{n}+\Upsilon  & \textrm{if }i=j=1, \\
\frac{\sigma^2}{n}+2\Upsilon & \textrm{if } 2\leq i=j\leq n, \\
-\Upsilon & \textrm{if } |i-j|=1,\\
0 &  \textrm{otherwise}.
\end{array}
\right.
\end{equation}
Then the law of $D_n\mathbf{z}_n$ under $P_{n,\vartheta}$ is given by $\mathbf{N}(\sum_{k=1}^K\gamma_k\mathbf{e}_{i(k)},V_n(\sigma))$, where $\mathbf{e}_1,\dots,\mathbf{e}_n$ denote the canonical basis of $\mathbb{R}^n$ (recall that $i(k)$ is defined by $i(k)=\lceil nS_k\rceil$). Next, define the $n\times n$ orthogonal matrix $U_n$ by $U_n^{ij}=\frac{2}{\sqrt{2n+1}}\cos\left[\frac{2\pi}{2n+1}\left(i-\frac{1}{2}\right)\left(j-\frac{1}{2}\right)\right]$. Then by Lemma 1 of \citet{KS2013} $U_n$ diagonalizes $V_n(\sigma)$ as $U_nV_n(\sigma)U_n=\Lambda_n(\sigma):=\diag(\lambda^n_1(\sigma),\dots,\lambda^n_n(\sigma))$, where $\lambda^n_i(\sigma)=\frac{\sigma^2}{n}+4\Upsilon\sin^2\left[\frac{\pi}{2}\left(\frac{2i-1}{2n+1}\right)\right]$. Therefore, setting $\mathbf{z}'_n=(Z_1',\dots,Z_n')^*=\Lambda_n(\sigma)^{-\frac{1}{2}}U_n(D_n\mathbf{z}_n-\sum_{k=1}^K\gamma_k\mathbf{e}_{i(k)})$, we have $Z'_i\overset{i.i.d.}{\sim}\mathbf{N}(0,1)$ under $P_{n,\vartheta}$.

\begin{lem}\label{lemma:RL}
Let $(\tau_n)$ be a sequence of real numbers tending to some $\tau\in(-1,0)\cup(0,1)$. Then
\begin{equation}\label{cosine}
\frac{1}{2n+1}\sum_{j=1}^n\frac{1}{\lambda^n_j(\sigma)}\cos\left[\pi\tau_n\left(2j-1\right)\right]=O(1)\qquad
\textrm{as }n\to\infty.
\end{equation}
\end{lem}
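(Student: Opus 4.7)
My plan is to use a summation by parts (Abel summation) argument, exploiting that the cosine weights have uniformly bounded partial sums while $1/\lambda^n_j(\sigma)$ is monotone in $j$.

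First I would establish that the partial sums $S_N := \sum_{j=1}^N \cos[\pi\tau_n(2j-1)]$ are bounded uniformly in $N$ and $n$. Using the geometric series identity
\begin{equation*}
S_N = \mathrm{Re}\sum_{j=1}^N e^{i\pi\tau_n(2j-1)} = \mathrm{Re}\frac{e^{i\pi\tau_n}(1-e^{2i\pi\tau_n N})}{1-e^{2i\pi\tau_n}},
\end{equation*}
I obtain $|S_N| \leq 1/|\sin(\pi\tau_n)|$. Since $\tau_n \to \tau \in (-1,0)\cup(0,1)$ we have $|\sin(\pi\tau_n)| \to |\sin(\pi\tau)| > 0$, so there is a constant $C$ with $\sup_n \sup_{N\leq n}|S_N| \leq C$. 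This is precisely the place where the assumption $\tau \neq 0, \pm 1$ enters.

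Next, observe from the explicit form $\lambda^n_j(\sigma)=\sigma^2/n + 4\Upsilon\sin^2[\pi(2j-1)/(2(2n+1))]$ that $j \mapsto \lambda^n_j(\sigma)$ is strictly increasing on $\{1,\dots,n\}$ (since $(2j-1)/(2(2n+1)) \in (0,1/2)$), so $j \mapsto 1/\lambda^n_j(\sigma)$ is decreasing. Applying summation by parts yields
\begin{equation*}
\sum_{j=1}^n\frac{\cos[\pi\tau_n(2j-1)]}{\lambda^n_j(\sigma)}
= \sum_{j=1}^{n-1}S_j\left(\frac{1}{\lambda^n_j(\sigma)}-\frac{1}{\lambda^n_{j+1}(\sigma)}\right) + \frac{S_n}{\lambda^n_n(\sigma)},
\end{equation*}
and taking absolute values, the monotonicity lets the increments telescope:
\begin{equation*}
\left|\sum_{j=1}^n\frac{\cos[\pi\tau_n(2j-1)]}{\lambda^n_j(\sigma)}\right|
\leq C\left(\frac{1}{\lambda^n_1(\sigma)}-\frac{1}{\lambda^n_n(\sigma)}\right) + \frac{C}{\lambda^n_n(\sigma)}
\leq \frac{C}{\lambda^n_1(\sigma)} + \frac{C}{\lambda^n_n(\sigma)}.
\end{equation*}

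Finally, since $\lambda^n_1(\sigma) \geq \sigma^2/n$ and $\lambda^n_n(\sigma) \to 4\Upsilon$ as $n\to\infty$, the right-hand side is at most $Cn/\sigma^2 + O(1)$. Dividing by $2n+1$ yields the $O(1)$ bound claimed by the lemma. There is no real obstacle here; the one subtle point worth noting is that the prefactor $1/(2n+1)$ is exactly what absorbs the potential linear growth $1/\lambda^n_1(\sigma) \lesssim n$ left over after Abel summation — without it the statement would fail, and without the oscillation hypothesis $\tau \ne 0, \pm 1$ the Abel trick would not beat the trivial bound $1/\lambda^n_j \le n/\sigma^2$.
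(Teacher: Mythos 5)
Your proof is correct and follows essentially the same route as the paper's: both use Abel summation together with the geometric-series bound $|S_N|\leq 1/|\sin(\pi\tau_n)|$ for the oscillatory partial sums and the monotonicity of $j\mapsto\lambda^n_j(\sigma)$, arriving at a bound of order $1/\lambda^n_1(\sigma)\lesssim n$ that the prefactor $1/(2n+1)$ absorbs. The only cosmetic difference is that the paper carries out the argument for the complex exponential and takes the real part at the end, whereas you work with the cosine partial sums directly.
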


\begin{proof}
We imitate a proof of the Riemann-Lebesgue lemma. Since the quantity in the left side of \eqref{cosine} coincides with the real part of
\begin{align*}
\Delta_n:=\frac{1}{2n+1}\sum_{j=1}^n\frac{1}{\lambda^n_j(\sigma)}\exp\left[\sqrt{-1}\pi\tau_n\left(2j-1\right)\right],
\end{align*}
it suffices to prove $\Delta_n=O(1)$. Summation by parts yields
\begin{align*}
\Delta_n=\frac{1}{2n+1}\left[\sum_{j=1}^{n-1}\left(\frac{1}{\lambda^n_j(\sigma)}-\frac{1}{\lambda^n_{j+1}(\sigma)}\right)\frac{1-e^{2\sqrt{-1}\pi\tau_nj}}{e^{-\sqrt{-1}\pi\tau_n}-e^{\sqrt{-1}\pi\tau_n}}
+\frac{1}{\lambda^n_n(\sigma)}\frac{1+e^{-\sqrt{-1}\pi\tau_n}}{e^{-\sqrt{-1}\pi\tau_n}-e^{\sqrt{-1}\pi\tau_n}}\right].
\end{align*}
Therefore, noting that $\lambda^n_1(\sigma)<\cdots<\lambda^n_n(\sigma)$, we have
\begin{align*}
|\Delta_n|\leq\frac{4}{2n+1}\frac{1}{\left|\sin\left(\pi\tau_n\right)\right|}\frac{1}{\lambda^n_1(\sigma)}.
\end{align*}
Since $\lambda^n_1(\sigma)\geq\frac{\sigma^2}{n}$ and $\sin\left(\pi\tau\right)\neq0$, we obtain $\Delta_n=O(1)$.
\end{proof}

\begin{proof}[\upshape{\bfseries{Proof of Proposition \ref{LAN}}}]
Let $h_n=(h_n^{(k)})_{0\leq k\leq K}$ be $(K+1)$-column vectors tending to $h=(h^{(k)})_{0\leq k\leq K}\in\mathbb{R}^{K+1}$ as $n\to\infty$, and set $\delta^n_i=\lambda^n_i(\sigma+n^{-\frac{1}{4}}h^{(0)}_n)/\lambda^n_i(\sigma)-1$. Then, noting that $U_n$ does not depend on $\sigma$, the log-likelihood ratio is given by
\begin{multline*}
\log\left(\frac{\mathrm{d}P_{n,\vartheta+n^{-\frac{1}{4}}h_n}}{\mathrm{d}P_{n,\vartheta}}\right)
=-\frac{1}{2}\sum_{i=1}^n\left\{\log(1+\delta^n_i)-\left(Z_i'\right)^2\frac{\delta^n_i}{1+\delta^n_i}\right\}\\
+\sum_{k=1}^K\left\{n^{-\frac{1}{4}}h_n^{(k)}\sum_{j=1}^n\frac{U^{i(k)j}_n}{(1+\delta^n_j)\sqrt{\lambda^n_j(\sigma)}}Z_j'-\frac{1}{2}\left(n^{-\frac{1}{4}}h_n^{(k)}\right)^2\sum_{j=1}^n\frac{\left(U^{i(k)j}_n\right)^2}{(1+\delta^n_j)\lambda^n_j(\sigma)}\right\}.
\end{multline*}
Similarly to the proof of Eq.(3.2) from \citet{GJ2001a}, we can deduce
\[
\sup_{1\leq i\leq n}\left|\delta^n_i\right|\to0,\qquad
\sum_{i=1}^n(\delta^n_i)^2\to2(h^{(0)})^2(2\sigma\sqrt{\Upsilon})^{-1}.
\]
Therefore, noting that $Z'_i\overset{i.i.d.}{\sim}\mathbf{N}(0,1)$ under $P_{n,\vartheta}$ (especially $Z'_i\{(Z_i')^2-1\}$ is centered under $P_{n,\vartheta}$), it is enough to prove
\begin{equation}\label{fisher}
n^{-\frac{1}{2}}\sum_{j=1}^n\frac{U^{i(k)j}_nU^{i(l)j}_n}{\lambda^n_j(\sigma)}=(2\sigma\sqrt{\Upsilon})^{-1}1_{\{k=l\}}
+O\left(\frac{1}{\sqrt{n}}\right)
\end{equation}
as $n\to\infty$ for any $k,l=1,\dots,K$ in order to derive the desired result.

By a trigonometric identity we can decompose the target quantity as
\begin{align*}
n^{-\frac{1}{2}}\sum_{j=1}^n\frac{U^{i(k)j}_nU^{i(l)j}_n}{\lambda^n_j(\sigma)}&=\frac{2n^{-\frac{1}{2}}}{2n+1}\sum_{j=1}^n\frac{1}{\lambda^n_j(\sigma)}\left\{\cos\left[\pi\phi_n^+(k,l)\left(2j-1\right)\right]+\cos\left[\pi\phi_n^-(k,l)\left(2j-1\right)\right]\right\}\\
&=:\mathbb{I}_n+\mathbb{II}_n,
\end{align*}
where $\phi_n^+(k,l)=\frac{i(k)+i(l)-1}{2n+1}$ and $\phi_n^-(k,l)=\frac{i(k)-i(l)}{2n+1}$. Since $\phi_n^+(k,l)\to \frac{S_k+S_l}{2}\in(0,1)$, Lemma \ref{lemma:RL} yields $\mathbb{I}_n=O(n^{-1/2})$. Similarly we have $\mathbb{II}_n=O(n^{-1/2})$ if $k\neq l$. Now consider the case that $k=l$. 
\if0
Using the lower and upper Darboux sums of the integral
\[
J_n=\frac{2}{\pi}\int_{\frac{\pi}{2}\frac{1}{2n+1}}^{\frac{\pi}{2}}\frac{1}{\frac{\sigma^2}{n}+4\Upsilon\sin^2(z)}\mathrm{d}z,
\]
we deduce
\[
J_n\leq\sqrt{n}\mathbb{II}_n\leq J_n+\frac{2}{2n+1}\left\{\frac{1}{\frac{\sigma^2}{n}+4\Upsilon}-\frac{1}{\frac{\sigma^2}{n}+4\Upsilon\sin^2\left(\frac{\pi}{2}\frac{1}{2n+1}\right)}\right\},
\]
hence we obtain $\mathbb{II}_n=n^{-\frac{1}{2}}J_n+O(n^{-1/2})$.
\fi
Applying a standard approximation argument for Riemann sums by the corresponding integral, we obtain $\mathbb{II}_n=n^{-\frac{1}{2}}J_n+O(n^{-1/2})$, where
\[
J_n=\frac{2}{\pi}\int_{\frac{\pi}{2}\frac{1}{2n+1}}^{\frac{\pi}{2}}\frac{1}{\frac{\sigma^2}{n}+4\Upsilon\sin^2(z)}\mathrm{d}z
\]
(cf.~Eq.(8.8) of \citet{GJ2001a}).  
Since $J_n=\frac{\sqrt{n}}{\sigma}(\frac{\sigma^2}{n}+4\Upsilon)^{-1/2}+O(1)$, we obtain $\mathbb{I}_n=(2\sigma\upsilon)^{-1}+O(n^{-1/2})$. This completes the proof of \eqref{fisher}.
\end{proof}

\subsubsection{Proof of Proposition \ref{negative}}

Assume that there is a function $g$ satisfying \ref{hypo:W} and $v_J(g,\theta)=8\sigma\sqrt{\Upsilon}$ for some $\theta>0$. Without loss of generality we may assume $\psi(g)_2=\int_{-\infty}^\infty g(x)^2\mathrm{d}x=1$. Since the minimizer of $\theta\mapsto v_J(g,\theta)$ is $\theta^*=\sqrt{\Phi(g)_{12}\Upsilon/\Phi(g)_{11}\sigma^2}$, we obtain $2\sqrt{\Phi(g)_{22}\Phi(g)_{12}}=1$. Now, setting $K=\phi_{g,g}$, we have $K(0)=\psi(g)_2=1$ and $\Phi(g)_{12}=\int_0^\infty K'(x)^2\mathrm{d}x$ by integration by parts, hence the Schwarz inequality and integration by parts yield
\[
2\sqrt{\Phi(g)_{22}\Phi(g)_{12}}=2\sqrt{\int_0^\infty K(x)^2\mathrm{d}x\int_0^\infty K'(x)^2\mathrm{d}x}
\geq 2\left|\int_0^\infty K(x)K'(x)\mathrm{d}x\right|=1.
\] 
In our case the equality holds true, so there is a constant $c$ such that $K'(x)=cK(x)$ for all $x\geq0$. Since $K(0)=1$ and $K(x)\to0$ as $x\to\infty$, we have $K(x)=e^{cx}$ for all $x\geq0$ and $c<0$. This gives a contradiction because $K'(0)=-\phi_{g',g}(0)=0$ due to integration by parts.\hfill$\Box$

\subsubsection{Proof of Proposition \ref{MLE}}

We use the same notation as in Section \ref{proof:LAN}. Noting that the left side of Eq.\eqref{fisher} is equal to $n^{-\frac{1}{2}}\mathbf{e}_{i(k)}V_n(\sigma)^{-1}\mathbf{e}_{i(l)}$, we have $E\left[\widehat{\gamma}^n_k\right]=n^{-\frac{1}{2}}\cdot2\sigma\sqrt{\Upsilon}\mathbf{e}_{i(k)}V_n(\sigma)^{-1}\sum_{l=1}^K\gamma_l\mathbf{e}_{i(l)}
=\gamma_k+O(n^{-1/2})$. Therefore, it suffices to prove $n^{1/4}(\widehat{\gamma}^n-E\left[\widehat{\gamma}^n\right])\xrightarrow{d}\mathbf{N}(0,2\sigma\sqrt{\Upsilon}E_K)$. $\widehat{\gamma}^n_k-E\left[\widehat{\gamma}^n_k\right]$ can be rewritten as $\widehat{\gamma}^n_k-E\left[\widehat{\gamma}^n_k\right]
=n^{-\frac{1}{2}}\cdot2\sigma\sqrt{\Upsilon}\mathbf{e}_{i(k)}U_n\Lambda_n(\sigma)^{-\frac{1}{2}}\mathbf{z}'_n$. Since $\mathbf{z}'_n\sim \mathbf{N}(0,E_n)$, it is enough to prove $\sqrt{n}\Cov(\widehat{\gamma}^n_k,\widehat{\gamma}^n_l)\to2\sigma\sqrt{\Upsilon}1_{\{k=l\}}$ for all $k,l=1,\dots,K$, which follows from \eqref{fisher}.\hfill$\Box$

\section*{Acknowledgements}

\addcontentsline{toc}{section}{Acknowledgements}

The author is grateful to Teppei Ogihara who pointed out a problem on the mathematical construction of the noise process in a previous version of this paper.  
This work was partly supported by Grant-in-Aid for JSPS Fellows, the Program for Leading Graduate Schools, MEXT, Japan and CREST, JST.

{\small
\renewcommand*{\baselinestretch}{1}\selectfont
\addcontentsline{toc}{section}{References}
%\bibliography{base}

}

\end{document}